\newcommand{\aaa}{\ensuremath{\mathbf{a}}}
\newcommand{\sss}{\ensuremath{\mathbf{s}}}
\newcommand{\xx}{\ensuremath{\mathbf{x}}}
\newcommand{\yy}{\ensuremath{\mathbf{y}}}
\newcommand{\TT}{\ensuremath{\mathcal{T}}}
\newcommand{\LipOp}{\operatorname{\rm{Lip}}}
\newcommand{\sgn}{\operatorname{\rm{sgn}}}
\newcommand{\Id}{\operatorname{\rm{Id}}}
\newcommand{\spn}{\operatorname{\rm{span}}}
\newcommand{\Lip}{\ensuremath{\mathrm{Lip}}}
\newcommand{\Mol}{\ensuremath{\mathrm{Mol}}}
\newcommand{\cop}{\operatorname{\mathrm{co}}_p}
 \newcommand{\Rea}{\mathbb{R}}
 \newcommand{\Nat}{\mathbb{N}}
\newcommand{\Sym}{\mathbb{S}}
\newcommand{\C}{\ensuremath{\mathcal{C}}}
 \newcommand{\PP}{\ensuremath{\mathcal{P}}}
 \newcommand{\MM}{\ensuremath{\mathcal{M}}}
 \newcommand{\NN}{\ensuremath{\mathcal{N}}}
 \newcommand{\AAA}{\ensuremath{\mathcal{A}}}
\newcommand{\ee}{\ensuremath{\mathbf{e}}}
\newcommand{\F}{\ensuremath{\mathcal{F}}}
\newcommand{\T}{\ensuremath{\mathcal{T}}}
\newcommand{\R}{\ensuremath{\mathcal{R}}}
\newcommand{\Ss}{\ensuremath{\mathcal{S}}}
\newcommand{\AEM}{\ensuremath{\text{\AE}}}
\newtheorem{Theorem}{Theorem}[section]
\newtheorem{Lemma}[Theorem]{Lemma}
\newtheorem{Proposition}[Theorem]{Proposition}
\newtheorem{Corollary}[Theorem]{Corollary}
\theoremstyle{remark}
\newtheorem{Remark}[Theorem]{Remark}
\newtheorem{Definition}[Theorem]{Definition}
\newtheorem{Example}[Theorem]{Example}
\newtheorem{Question}[Theorem]{Question}
\subjclass[2010]{26A16; 46A16; 46B20; 46B80; 46B85}
\keywords{Quasimetric space, quasi-Banach space, metric envelope, Lipschitz free $p$-space, Arens-Eells $p$-space}
\begin{document}

\title[Lipschitz free $p$-spaces]{Lipschitz free $p$-spaces  for $0<p<1$}

\author[F. Albiac]{Fernando Albiac}
\address{Mathematics Department--InaMat \\ 
Universidad P\'ublica de Navarra\\
Campus de Arrosad\'{i}a\\
Pamplona\\ 
31006 Spain}
\email{fernando.albiac@unavarra.es}

\author[J. L. Ansorena]{Jos\'e L. Ansorena} 
\address{Department of Mathematics and Computer Sciences\\
Universidad de La Rioja\\ 
Logro\~no\\
26004 Spain}
\email{joseluis.ansorena@unirioja.es}

\author[M. C\'uth]{Marek C\'uth} 
\address{Faculty of Mathematics and Physics, Department of Mathematical Analysis\\
Charles University\\ 
186 75 Praha 8\\
Czech Republic}
\email{cuth@karlin.mff.cuni.cz}

\author[M. Doucha]{Michal Doucha} 
\address{Institute of Mathematics\\
 Czech Academy of Sciences\\
115 67 Praha 1\\
Czech Republic}
\email{doucha@math.cas.cz}

\begin{abstract}
This  paper  initiates the study of  the structure of a new class of $p$-Banach spaces,  $0<p<1$, namely the Lipschitz free $p$-spaces (alternatively called Arens-Eells $p$-spaces) $\F_{p}(\MM)$ over $p$-metric spaces. We systematically develop the theory and show that some results hold as in the case of $p=1$, while some new interesting phenomena appear in the  case $0<p<1$ which have no analogue in the classical setting. For the former, we, e.g.,  show that the Lipschitz free $p$-space over a separable ultrametric space is isomorphic to $\ell_{p}$ for all $0<p\le 1$. On the other hand, solving a problem by the first author and N. Kalton, there are metric spaces $\NN\subset \MM$ such that the natural embedding from $\F_p(\NN)$ to $\F_p(\MM)$ is not an isometry.
\end{abstract}

\thanks{Accepted in Israel J. Math. In this updated version we corrected some misprints and moved the result concerning embeddability of $\ell_p$ from the previous version to a new preprint entitled \emph{Embeddability of $\ell_{p}$ and bases in Lipschitz free $p$-spaces for $0<p\leq 1$}, which is available on arXiv.}

%\dedicatory{}

\maketitle

\section{Introduction}

\noindent It is safe to say that  most of the research in functional analysis is done in the framework of Banach spaces. While the theory of the geometry of these spaces has evolved very rapidly over the past sixty years, by contrast, the study of the more general case of quasi-Banach spaces has lagged far behind  despite the fact that the first papers in the subject appeared in the early 1940's (\cites{Aoki1942, Day1940}). The neglect of non-locally convex spaces within functional analysis is easily understood. Even when they are complete and metrizable, working with them requires doing without one of the most powerful tools in Banach spaces: the Hahn-Banach theorem and the duality techniques that rely on it. This difficulty in even making the simplest initial steps has led some to regard quasi-Banach spaces as  too challenging and consequently they have been assigned a secondary role in the theory. However, these challenges have been accepted by some researchers and the number of fresh techniques available in this general setting is now increasing (see a summary in \cite{KaltonHandbook}). We emphasize that proving new results in $p$-Banach spaces for $0<p<1$ often provides an alternative proof even for the limit case $p=1$. Hence, quasi-Banach spaces help us appreciate better and also shed new light on regular Banach spaces. Taking into account that more analysts find that quasi-Banach spaces have uses in their research, the task to know more about their structure seems to be urgent and important.

Every family of classical Banach spaces, like the sequence spaces $\ell_{p}$, the function spaces $L_{p}$, the Hardy spaces $H_{p}$, and the Lorentz sequence spaces $d(w,p)$, have a non-locally convex counterpart  corresponding to the values of $0<p<1$.  In this  paper we study Lipschitz free  $p$-spaces over quasimetric spaces. These new class of $p$-Banach spaces, denoted by $\mathcal F_{p}(\mathcal M)$, are an  analogy of the Lipschitz free spaces $\F(\MM)$, whose study has become a very active research field  within  Banach space theory since the appearance in 1999 of the important book \cite{Weaver2018} by Weaver (here we cite the updated second edition) and, more notably, after  the seminal paper \cite{GodefroyKalton2003} by Godefroy and Kalton in 2003.

Lipschitz free $p$-spaces were introduced in \cite{AlbiacKalton2009} with the sole instrumental purpose to build  examples for each $0<p<1$ of two \textit{separable} $p$-Banach spaces which are Lipschitz-isomorphic but fail to be linearly isomorphic. Whether this is possible or not for $p=1$ remains as of today the single most important open problem in the theory of non-linear classification of Banach spaces. However, even though Lipschitz free $p$-spaces were proved to be of substantial utility in functional analysis, the structure of those spaces has not been investigated ever since. Our goal in this paper is to fill this gap in the theory, to encourage further research in this direction, and help those who want to contribute to this widely unexplored  topic.

To that end, after the preliminary Sect.\ ~\ref{Prelim} on the basics in quasimetric and quasi-Banach spaces,  in Sect.\ ~\ref{MetricEnv} we introduce the  notion of metric envelope of a quasimetric space  $\MM$ and relate it to the existence of non-constant Lipschitz maps on $\MM$ as well as to the Banach envelope when $\MM$ is a quasi-Banach space.  In Sect.\ ~\ref{Sec4} we recall the definition of Lipschitz free $p$-space and bring up to light the main differences and setbacks of this theory with respect to the case $p=1$. We also  settle a question  that was raised in \cite{AlbiacKalton2009} and  use molecules and atoms in order to give an alternative  equivalent definition of Lispchitz free $p$-spaces which will be very useful in order to provide examples of Lipschitz free $p$-spaces isometrically isomorphic to $\ell_{p}$ and $L_{p}$ for $0<p<1$. In Sect.\ ~\ref{ultrametricSec} we completely characterise Lipschitz free $p$-spaces over separable ultrametric spaces, showing that for $p\le 1$ they are isomorphic to $\ell_{p}$. 

The most important results are perhaps the ones in Sect.\ \ref{structureSec}, where we study the relation between the subset structure of a quasimetric space $\MM$ and the subspace structure of $\F_{p}(\MM)$. To be precise, for each $p<1$ and each $p<q\le 1$ we provide an example of a subset $\NN$ of a $q$-metric space $\MM$ such that $\F_{p}(\NN)$ is not naturally a subspace of $\F_{p}(\MM)$. This fact evinces a very important    dissimilarity with respect to the case $p=1$ and solves another problem raised in  \cite{AlbiacKalton2009}.  

%We close the section and the article exhibiting a method to  linearly embed   $\ell_{p}$ in $\F_{p}(\MM)$ for $\MM$ quasimetric and  $p\le 1$.  The complementability of $\ell_{p}$ in Lipschitz free $p$-spaces over quasimetric spaces will be the subject of  a further publication.

Throughout this note we use standard terminology and notation in Banach space theory as can be found in \cites{AlbiacKalton2016}. We refer the reader to \cite{Weaver2018} for basic facts on Lipschitz free spaces and some of their uses, and  to \cite{KPR1984}  for background on quasi-Banach spaces. 

\section{Preliminaries}\label{Prelim}
\noindent There are two main goals in this preliminary section. First we review the notion of quasi-metric space along with the related notion of  quasi-Banach space and their main topological features.  Second, we lay out  the notation and terminology used in this article.

\subsection{Quasimetric spaces and Lipschitz maps.} Given an arbitrary nonempty set $\MM$, a \textit{quasimetric} on $\MM$ is a symmetric map   $\rho\colon\MM \times \MM\to [0,\infty)$ such that $\rho(x,y)=0$ if and only if $x=y$, and for some constant $\kappa\ge 1$, $\rho$ satisfies the quasi-triangle inequality   
\begin{equation}\label{quasinormtriang}
\rho(x,z)\le \kappa(\rho(x,y)+\rho(y,z)), \qquad x,y,z\in \MM.
\end{equation} 
The space $(\MM,\rho)$ is then called a \textit{quasimetric space} (see \cite{Heinonen2001}*{p.\ 109}).  
 A quasimetric $\rho$ on a set $\MM$ is said to be a \textit{$p$-metric}, $0<p\le 1$,    if $\rho^{\, p}$ is a metric, i.e., $$\rho^{\, p}(x,y)\le \rho^{\, p}(x,z)+\rho^{\, p}(z,y), \qquad x,y,z\in \MM,$$  in which case we call $(\MM,\rho)$ a \textit{$p$-metric space}.  An analogue of the Aoki-Rolewicz theorem holds in this context (see \cite{Heinonen2001}*{Proposition 14.5}): every quasimetric space can be endowed with an equivalent $p$-metric $\tau$ for some $0<p\le 1$, i.e.,  there is a constant $C=C(\kappa)\ge 1$ such that
\[
C^{-1}\tau(x,y)\le \rho(x,y)\le C\tau(x,y),\qquad x,y\in \MM.
\]

If $(\MM, \rho)$ and $(\NN, \tau)$ are quasimetric spaces we shall say that a map $f\colon \MM\rightarrow \NN$ is \textit{Lipschitz} if there exists a
 constant $C\ge 0$ so that 
\begin{equation}\label{Lipsconsineq}
\tau( f(x),f(y)) \le C \rho(x,y), \qquad x,y\in \MM.\end{equation}
We denote by $\LipOp(f)$   the smallest constant which can play the role of $C$ in the last inequality \eqref{Lipsconsineq}, i.e.,
\[
\LipOp(f)=\sup\left\{\frac{\tau(f(x) f(y))}{\rho(x,y)}\colon x,y \in \MM, x \not=y\right\}\in [0,\infty).
\]
If $f$ is injective, and both $f$ and $f^{-1}$ are Lipschitz, then we say that $f$ is  \textit{bi-Lipschitz} and that $\MM$ Lipschitz-embeds into $\NN$. If there is a bi-Lipschitz map from ${\MM}$ onto ${\NN}$, the spaces $\MM$ and $\NN$ are said to be \textit{Lipschitz isomorphic}.  A map $f$ from a quasimetric space $(\MM,\rho)$ into a quasimetric space $(\NN,\tau)$ is an \textit{isometry} if 
\[
\tau (f(x),f(y)) = \rho(x,y),\qquad x,y\in \MM.
\]
We shall say that  $(\MM, \rho)$ is a \textit{pointed quasimetric space (or a pointed $p$-metric space, or  a pointed metric space)}, if it has a distinguished point that we call  the \textit{origin} and denote $0$. The assumption of an origin is convenient to normalize Lipschitz functions. 

The \textit{Lipschitz dual} of a quasimetric space $(\MM,\rho)$, denoted $\Lip_0(\MM)$, is the (possibly trivial) vector space of all real-valued Lipschitz functions $f$ defined on $\MM$ such that $f(0) = 0$,  endowed with the Lipschitz norm
\[
\Vert f\Vert_{\text {Lip}} =  \sup\left\{  \frac{|f(x) - f(y)|}{\rho(x,y)} \colon  x, y\in \MM, x\not= y\right\}.
\]
It can be readily checked that $(\Lip_0(\MM), \Vert \cdot\Vert_{\text {Lip}})$ is a  Banach space.

\subsection{Quasi-normed spaces and their Banach envelopes.}\label{SectionQB} 
Recall  that a  \textit{quasi-normed space} is a (real) vector space $X$ equipped with a map $\Vert \cdot\Vert_{X}\colon X\to [0, \infty)$ with the properties:
\begin{enumerate}
\item[(i)] $\Vert x\Vert_{X} >0$ for all $x\not=0$,

\item[(ii)] $\Vert \alpha x\Vert_{X}=|\alpha| \Vert x\Vert_{X}$ for all $\alpha\in\Rea$ and all $x\in X$,

\item[(iii)]  there is a constant   $\kappa\ge 1$  so that for all $x$ and $y\in X$ we have 
\begin{equation}\label{eq:modofconc}\Vert x  +y\Vert_{X} \le \kappa(\Vert x\Vert_{X} +\Vert
y\Vert_{X}).\end{equation}
\end{enumerate}
 A  quasi-norm $\Vert\cdot\Vert_{X}$ induces a  linear metric topology. $X$ is called a \textit{quasi-Banach space} if $X$ is complete for this metric. Given $0<p\le 1$, $X$ is said to be  a \textit{$p$-normed} space if the quasi-norm  $\Vert \cdot \Vert_{X}$ verifies (i), (ii) and it is $p$-subadditive, i.e.,
\begin{enumerate}
\item[(iv)] $\Vert x+y\Vert_{X}^{p}\le \Vert x\Vert_{X}^{p} +\Vert y\Vert_{X}^{p}$
for all $x,y\in X$.
\end{enumerate}
Of course, (iv) implies (iii), and, by the Aoki-Rolewicz theorem (see \cite{KPR1984}), we also have that  (iii) implies (iv).  In  the case when $X$ is $p$-normed, a metric inducing the topology can be defined by $d(x,y)= \Vert x-y\Vert_{X}^{p}$.  A quasi-Banach space with an associated $p$-norm is also called a \textit{$p$-Banach space}.

A map $\Vert\cdot\Vert_{X}\colon X\to [0,\infty)$ that verifies properties (ii) and (iv) is called a \textit{$p$-seminorm} on $X$. Given a $p$-seminorm $\Vert\cdot\Vert_{X}$ on a vector space $X$ it is standard to construct a $p$-Banach space from the pair $(X,\Vert\cdot\Vert_{X})$ following the so-called \textit{completion method}. For that we consider the vector subset $N=\{x\in X\colon \Vert x\Vert_{X}=0\}$ and form the quotient space $X/N$, which is $p$-normed when endowed with $\Vert\cdot\Vert_{X}$. Now we just need to complete $(X/N, \Vert\cdot\Vert_{X})$. The reader should be acquainted with the fact that completeness and completion for quasi-metric spaces are completely analogous to such notions for metric spaces.

Given $0<p\le 1$, a subset $\C$ of a vector space $V$ is said to be \textit{absolutely $p$-convex} if for any $x$ and $y\in \C$ and any scalars $\lambda$ and $\mu$  with
$\ |\lambda|^p+|\mu|^p \le 1$ we have $\lambda \, x + \mu\, y \in \C$. The Minkowski functional $\Vert \cdot\Vert_{\C}$  of an absolutely $p$-convex set $\C$, given by 
\[
\Vert x\Vert_{\C}= \inf \left\{\lambda>0 \colon \lambda^{-1}x \in \C\right\},
\]
 defines a $p$-seminorm on $\spn(\C)$.

Given  a nonempty subset $Z$ of a vector space $V$ there is a method  for building a $p$-Banach space from it. Let $\cop(Z)$ denote the  \textit{$p$-convex hull} of $Z$, i.e., the smallest absolutely $p$-convex set containing $Z$. 
If $N=\{x\in \spn(Z)\colon \Vert x\Vert_{\cop(Z)}=0\}$, then the quotient space $\spn(Z)/N$ equipped with $\Vert \cdot \Vert_{\cop(Z)}$ is a $p$-normed linear space.   In the case when $\spn(Z)^{\ast}$ separates the points of $\spn(Z)$ then $\Vert \cdot\Vert_{\cop(Z)}$ is a $p$-norm. 

\begin{Definition}\label{complmeth} The   completion of $(\spn(Z)/N, \Vert \cdot \Vert_{\cop(Z)})$ will be called  the \textit{$p$-Banach space constructed from $Z$  by the $p$-convexification method} and  will be denoted by $(X_{p,Z},\Vert \cdot \Vert_{p,Z})$.
\end{Definition} 

Notice that it is possible to give an explicit expression for $\Vert \cdot \Vert_{p,Z}$. As a matter of fact,  for $x\in X_{p,Z}$ we have
\begin{equation}\label{Minko}
\Vert x\Vert_{p,Z}=\inf\left\{\left(\sum_{j=1}^{\infty}|a_{j}|^{p}\right)^{1/p}\colon x=\sum_{j=1}^{\infty }a_{i} \, x_i, \quad x_i\in Z \right\}.
\end{equation}

When dealing with a quasi-Banach space $X$ it is often convenient to know which is the ``smallest" Banach space containing $X$  or, more generally, given $0<q\le 1$, the smallest $q$-Banach space containing $X$. 

\begin{Definition}\label{BanEnvelnorm} Given a quasi-Banach space $X$ and $0<q\le 1$,
 the \textit{$q$-Banach envelope} of $X$ (resp.\ \textit{Banach envelope} for $q=1$), denoted 
$(\widehat{X^q},\Vert \cdot\Vert_{c,q})$ (resp.\ $(\widehat{X},\Vert \cdot\Vert_{c})$ for $q=1$)
 is the $q$-Banach space  obtained by applying  to the  unit ball $B_X$ of $X$ the $q$-convexification method.
 \end{Definition}

Obviously $\Vert x\Vert_{c,q}\le \Vert x\Vert$ for all $x\in X$, so that the identity map on $X$ induces a (not necessarily one-to-one)  bounded linear map  $i_{X,q}\colon X \to \widehat{X^q}$  whose  range is dense in $\widehat{X^q}$. This map possesses the following universal property: if $T\colon X\to Y$ is a bounded linear map and $Y$ is an arbitrary $q$-Banach space then $T$ factors through $i_{X,q}$, 
 \begin{equation*}
\xymatrix{X \ar[rr]^T \ar[dr]_{i_{X,q}} & & Y\\
& \widehat{X^q}\ar[ur]_{{\widehat T}} &}
\end{equation*}
and the unique  ``extension'' $\widehat T\colon \widehat{X^q}\to Y$  has the same norm as $T$. In particular, $X$ and  $\widehat{X^q}$ have the same dual space.
 
For instance, the $q$-Banach envelope of  $\ell_{p}$  for $0<p<q\le 1$ is $\ell_{q}$.

The following  formula for the $q$-Banach envelope quasi-norm will be very useful. The case $q=1$ was shown by Peetre in \cite{Peetre1974}. 
\begin{Lemma}
Let $X$ be a quasi-Banach space and $0<q\le 1$. Then for $x\in X$,
\begin{equation}\label{BEnorm}
\Vert x \Vert_{c,q}=\inf\left\{ \left(\sum_{i=1}^{n}\Vert x_{i}\Vert^q\right)^{1/q} \colon \sum_{i=1}^{n} x_{i}=x,\, x_{i}\in X,\, n\in \Nat\right\}.
\end{equation}
\end{Lemma}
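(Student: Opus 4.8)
The plan is to unravel the definition of $\|\cdot\|_{c,q}$, reduce the computation to a Minkowski functional, and then translate between two different ways of representing a vector. By Definition \ref{BanEnvelnorm}, $\widehat{X^q}$ is obtained by applying the $q$-convexification method to $Z=B_X$, so for $x\in X$ we have $\|x\|_{c,q}=\|i_{X,q}(x)\|_{\widehat{X^q}}$. Since the completion embeds the dense subspace isometrically, and since the Minkowski functional of an absolutely $q$-convex set is insensitive to adding elements of the nullspace $N$, the first step is to observe that $\|x\|_{c,q}$ coincides, for every $x\in X$, with the value at $x$ of the Minkowski functional of the $q$-convex hull $\mathrm{co}_{q}(B_X)$ (recall that $\spn(B_X)=X$, as every vector is a scalar multiple of a ball element).

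Second, I would give an explicit description of $\mathrm{co}_{q}(B_X)$. A routine induction on the number of terms shows that the smallest absolutely $q$-convex set containing $B_X$ is exactly
\[
\mathrm{co}_{q}(B_X)=\left\{\sum_{i=1}^{n}\lambda_i z_i \colon n\in\Nat,\ z_i\in B_X,\ \sum_{i=1}^{n}|\lambda_i|^{q}\le 1\right\};
\]
indeed, this set contains $B_X$, it is absolutely $q$-convex because $|\lambda|^{q}\sum_i|\lambda_i|^{q}+|\mu|^{q}\sum_j|\mu_j|^{q}\le|\lambda|^{q}+|\mu|^{q}\le 1$, and it is contained in every absolutely $q$-convex superset of $B_X$. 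Computing its Minkowski functional and absorbing the normalizing constant into the coefficients yields the finite-sum formula
\[
\|x\|_{c,q}=\inf\left\{\left(\sum_{i=1}^{n}|a_i|^{q}\right)^{1/q}\colon x=\sum_{i=1}^{n}a_i z_i,\ z_i\in B_X,\ n\in\Nat\right\},
\]
which is the analogue of \eqref{Minko} with finite sums.

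Finally, I would match this with the right-hand side of \eqref{BEnorm} by passing between the two kinds of representations. Given a decomposition $x=\sum_{i=1}^{n}x_i$ with $x_i\in X$, discard the null terms and write $x_i=\|x_i\|\,z_i$ with $z_i=x_i/\|x_i\|\in B_X$; this produces a representation with coefficients $a_i=\|x_i\|$, whence $\|x\|_{c,q}\le(\sum_i\|x_i\|^{q})^{1/q}$, and taking the infimum gives ``$\le$''. Conversely, given $x=\sum_{i=1}^{n}a_i z_i$ with $z_i\in B_X$, set $x_i=a_i z_i\in X$; then $\sum_i x_i=x$ and $\|x_i\|=|a_i|\,\|z_i\|\le|a_i|$, so the right-hand side of \eqref{BEnorm} is $\le(\sum_i|a_i|^{q})^{1/q}$, and taking the infimum gives ``$\ge$''. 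I expect the only delicate point to be the first step: justifying that the quotient by $N$ and the subsequent completion leave the value of the Minkowski functional on $X$ unchanged, so that one may legitimately work with the finite-sum expression above rather than with the infinite-sum formula \eqref{Minko}; once this is secured, the remaining manipulations are purely formal.
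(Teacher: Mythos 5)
Your proof is correct, but it takes a genuinely different route from the paper's. The paper argues abstractly: it takes the right-hand side of \eqref{BEnorm} as the \emph{definition} of a $q$-seminorm $\Vert\cdot\Vert_0$ on $X$, forms the $q$-Banach space $X_0$ by the completion method, and then observes that any bounded $T\colon X\to Y$ into a $q$-Banach space satisfies $\Vert T(x)\Vert\le\Vert T\Vert\,\Vert x\Vert_0$ (by $q$-subadditivity applied to any decomposition $x=\sum_{i=1}^n x_i$), so that $X_0$ enjoys the same universal factorization property as $\widehat{X^q}$; uniqueness of the object with that property then forces $X_0$ and $\widehat{X^q}$ to be isometric, which yields the formula. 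You instead compute everything by hand inside $X$: you identify $\Vert x\Vert_{c,q}$ with the Minkowski functional of $\operatorname{\mathrm{co}}_q(B_X)$ (correctly noting that the quotient by $N$ and the completion leave this value unchanged on the dense copy of $X/N$), describe $\operatorname{\mathrm{co}}_q(B_X)$ explicitly as the set of finite $q$-convex combinations of ball elements, and then translate between representations $x=\sum_i a_i z_i$ with $z_i\in B_X$ and decompositions $x=\sum_i x_i$ via $x_i=a_iz_i$ and $z_i=x_i/\Vert x_i\Vert$. Your approach buys self-containedness and makes the geometry of the $q$-convex hull explicit, at the cost of the bookkeeping with the nullspace and completion that you flag at the end (which you handle adequately); the paper's approach buys brevity and robustness — the only computation is a one-line norm estimate, and the argument pattern transfers verbatim to other situations where a candidate formula for an envelope norm must be certified, since the universal property does all the identification work.
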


\begin{proof}
Let $\Vert\cdot\Vert_{0}$ be the $q$-seminorm on $X$ defined by the expression in \eqref{BEnorm} and $X_0$ be the $q$-Banach space obtained from $(X, \Vert\cdot\Vert_{0})$ by the completion method. If $T\colon X \to Y$ is a bounded linear map and $Y$ is $q$-Banach, then $\Vert T(\xx)\Vert \le \Vert T\Vert \Vert \xx\Vert_0$.
Consequently, $X_0$ has the same universal property as $\widehat{X^q}$, thus $X_0$ and $\widehat{X^q}$ are isometric.
 \end{proof}
 
 \subsection{$p$-norming sets in quasi-Banach spaces}

 \begin{Definition} Given a quasi-Banach space $X$ and $0<p\le 1$, we say that a subset $Z$ of $X$ is a \textit{$p$-norming set with constants $C$ and $D$} if
\[
\frac{1}{C} \overline{\cop(Z)} \subseteq B_X \subseteq D  \overline{\cop(Z)}.
\]
In the case when $C=D=1$ we say that $Z$ is \textit{isometrically $p$-norming}.
\end{Definition}

Note that $Z$ is a $p$-norming set of $X$ if and only if $\Vert \cdot\Vert_{p,Z}$ defines an equivalent quasi-norm on $X$. Consequently, if $X$ admits a $p$-norming set then  $X$ is isomorphic to a $p$-Banach space. Conversely, if $X$ is a $p$-Banach space, then a set $Z\subseteq X$ is  $p$-norming with constants $C$ and $D$ if and only if 
\begin{equation}\label{Transformation}
\frac{1}{C} Z \subseteq B_X \subseteq D \overline{\cop(Z)}.
\end{equation}
Adopting the  terminology from harmonic analysis it can  be said that a set $Z$ is $p$-norming in $X$ if and only if $(Z,\ell_p)$ is an atomic decomposition of $X$. Recall that  a  pair $(\AAA,\Sym)$, where $\AAA$ is a subset of $X$ and $\Sym$ is a symmetric sequence space,  is said to be an \textit{atomic decomposition} of $X$
if there are constants $0<C, D<\infty$ such that
\begin{itemize}
\item[(i)] Given $f=(a_n)_{n=1}^\infty\in \Sym$ and $(\alpha_n)_{n=1}^\infty\subset\AAA$ then $\sum_{n=1}^\infty a_n \,\alpha_n$ converges in $X$ to a vector $x$ verifying $\Vert x\Vert  \le C \Vert f\Vert_{\Sym}$, and

\item[(ii)] for any $x\in X$ there are $f=(a_n)_{n=1}^\infty\in\Sym$ and $(\alpha_n)_{n=1}^\infty\subset\AAA$ such that $x= \sum_{n=1}^\infty a_n \,\alpha_n$
and $\Vert f\Vert_{\Sym} \le D \Vert x\Vert$.
\end{itemize}

We conclude this preliminary section enunciating for future reference a few straightforward auxiliary results on $p$-norming sets.
\begin{Lemma}\label{lemma:density} Suppose $Z_1$ and $Z_{2}$ are subsets of a quasi-Banach $X$ such that 
$Z_{1} \subseteq Z_2$, 
$Z_1$ is dense in $Z_2$,  and
$Z_2$ is $p$-norming in $X$.
Then $Z_1$ is a $p$-norming set in $X$ with the same constants as $Z_2$.
\end{Lemma}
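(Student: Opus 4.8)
The plan is to show that the density hypothesis upgrades the set inclusion $Z_1\subseteq Z_2$ to an equality of closed $p$-convex hulls, $\overline{\cop(Z_1)}=\overline{\cop(Z_2)}$, after which both inclusions defining the $p$-norming property transfer verbatim and the constants $C$ and $D$ are preserved.

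First I would record the easy half. Since $Z_1\subseteq Z_2$, monotonicity of the $p$-convex hull (the smallest absolutely $p$-convex set containing the larger set contains the hull of the smaller one) gives $\cop(Z_1)\subseteq \cop(Z_2)$, and passing to closures yields $\overline{\cop(Z_1)}\subseteq\overline{\cop(Z_2)}$. In particular $\tfrac{1}{C}\overline{\cop(Z_1)}\subseteq \tfrac{1}{C}\overline{\cop(Z_2)}\subseteq B_X$, which is the first of the two required containments.

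The heart of the matter is the reverse inclusion $\overline{\cop(Z_2)}\subseteq \overline{\cop(Z_1)}$. Here I would first isolate the auxiliary observation that the closure of an absolutely $p$-convex set is again absolutely $p$-convex: given $x,y\in\overline{\cop(Z_1)}$ and scalars with $|\lambda|^p+|\mu|^p\le 1$, one approximates $x,y$ by elements of $\cop(Z_1)$ and uses the joint continuity of the vector-space operations in the quasi-norm topology to conclude that $\lambda x+\mu y$ is again a limit of points of $\cop(Z_1)$, hence lies in $\overline{\cop(Z_1)}$. Granting this, $\overline{\cop(Z_1)}$ is a \emph{closed} absolutely $p$-convex set. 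Density of $Z_1$ in $Z_2$ then gives $Z_2\subseteq \overline{Z_1}\subseteq \overline{\cop(Z_1)}$, and since $\cop(Z_2)$ is by definition the smallest absolutely $p$-convex set containing $Z_2$, we obtain $\cop(Z_2)\subseteq \overline{\cop(Z_1)}$; taking closures and using that $\overline{\cop(Z_1)}$ is already closed yields $\overline{\cop(Z_2)}\subseteq \overline{\cop(Z_1)}$.

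Combining the two inclusions gives $\overline{\cop(Z_1)}=\overline{\cop(Z_2)}$, so that $B_X\subseteq D\,\overline{\cop(Z_2)}=D\,\overline{\cop(Z_1)}$, and together with the first step this is precisely the assertion that $Z_1$ is $p$-norming with the same constants $C$ and $D$. The only step requiring genuine care — and hence the main, though mild, obstacle — is verifying that passing to the closure preserves absolute $p$-convexity; once that is in hand, the rest is formal monotonicity of hulls and closures combined with the density hypothesis.
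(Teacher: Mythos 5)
Your proof is correct. The paper states this lemma without proof, listing it among the ``straightforward auxiliary results'' on $p$-norming sets, and your argument---reducing everything to the identity $\overline{\cop(Z_1)}=\overline{\cop(Z_2)}$ via the observation that the closure of an absolutely $p$-convex set is absolutely $p$-convex (by joint continuity of the vector operations in the quasi-norm topology)---is precisely the natural argument the authors intend, with both constants then transferring verbatim.
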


\begin{Lemma}\label{lemma:transformation} 
Suppose that $Z_1$ and $Z_2$ are $p$-norming sets for quasi-Banach spaces  $X_1$ and $X_2$, respectively. Let $T$  be a one-to-one linear map from $\spn(Z_1)$ into $X_2$ such that $T(Z_1)=Z_2$. Then $T$ extends to  an onto isomorphism  $\widetilde T\colon X_1 \to X_2$. Moreover, in the case when $Z_1$ and $Z_2$ are both isometrically $p$-norming sets, $\widetilde T$ is an isometry.
\end{Lemma}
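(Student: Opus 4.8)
The plan is to verify that $T$ transports the $p$-convexification Minkowski functional \emph{exactly}, to note that on each span this functional is equivalent to the ambient quasi-norm (which is precisely what the $p$-norming hypothesis provides), and then to extend $T$ by density and completeness to an onto isomorphism. First I would unwind the $p$-norming hypothesis. If $Z$ is $p$-norming in $X$ with constants $C,D$, then unravelling
$\frac1C\overline{\cop(Z)}\subseteq B_X\subseteq D\,\overline{\cop(Z)}$ yields, for every $w\in\spn(Z)$,
\[
\frac{1}{D}\Vert w\Vert_{\cop(Z)}\le \Vert w\Vert_X\le C\,\Vert w\Vert_{\cop(Z)}.
\]
In particular $B_X\subseteq D\,\overline{\cop(Z)}\subseteq\overline{\spn(Z)}$, so $\spn(Z)$ is dense in $X$; and when $Z$ is isometrically $p$-norming ($C=D=1$) these inequalities collapse to $\Vert w\Vert_X=\Vert w\Vert_{\cop(Z)}$.

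The heart of the argument, and the one place to be careful, is the identity
\[
\Vert Tw\Vert_{\cop(Z_2)}=\Vert w\Vert_{\cop(Z_1)},\qquad w\in\spn(Z_1).
\]
Since $T$ is one-to-one and linear with $T(Z_1)=Z_2$, its restriction $T|_{Z_1}\colon Z_1\to Z_2$ is a bijection; hence a representation $w=\sum_j a_j z_j$ with $z_j\in Z_1$ is carried by $T$ to $Tw=\sum_j a_j\,Tz_j$ with $Tz_j\in Z_2$ and the \emph{same} scalars, while $T^{-1}$ carries representations of $Tw$ over $Z_2$ back to representations of $w$ over $Z_1$. Thus the two infima defining the Minkowski functionals range over identical sets of admissible coefficient tuples and coincide. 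Because every element of $\spn(Z_i)$ is a \emph{finite} combination of elements of $Z_i$, finite representations suffice and no convergence issue intervenes; by \eqref{Minko} the common value is exactly the norm of $w$, resp.\ $Tw$, viewed in the $p$-convexified spaces.

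Combining the displayed equivalences with this identity gives, for $w\in\spn(Z_1)$,
\[
\frac{1}{C_1 D_2}\Vert w\Vert_{X_1}\le \Vert Tw\Vert_{X_2}\le C_2 D_1\,\Vert w\Vert_{X_1},
\]
so $T$ is bounded and bounded below on the dense subspace $\spn(Z_1)$. It therefore extends uniquely to a bounded linear map $\widetilde T\colon X_1\to X_2$ which, by continuity, still satisfies the lower bound; being bounded below on the complete space $X_1$, it is injective with closed range, and since $\widetilde T(X_1)\supseteq T(\spn(Z_1))=\spn(Z_2)$ is dense in $X_2$, the range is all of $X_2$. Hence $\widetilde T$ is an onto isomorphism. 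In the isometric case the equivalences are equalities and the identity upgrades to $\Vert Tw\Vert_{X_2}=\Vert w\Vert_{X_1}$ on $\spn(Z_1)$, so the extension $\widetilde T$ is an isometry. The only genuinely delicate point is the exact transport of the $p$-convexification functional — that it is preserved, not merely up to a multiplicative constant — which rests entirely on $T$ inducing a bijection between $Z_1$ and $Z_2$ and therefore between their sets of representations.
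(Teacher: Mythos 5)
Your argument breaks at the very first display, and the break is fatal. The left-hand inequality $\tfrac{1}{D}\Vert w\Vert_{\cop(Z)}\le\Vert w\Vert_X$ does \emph{not} follow from the $p$-norming hypothesis: the inclusion $B_X\subseteq D\,\overline{\cop(Z)}$ controls the Minkowski functional of the \emph{closed} hull $\overline{\cop(Z)}$, whereas $\Vert\cdot\Vert_{\cop(Z)}$ — the functional your transport identity is about, since $T$ is only defined on $\spn(Z_1)$ and only finite representations make sense there — is an infimum over \emph{finite} representations, and passing to the closure can shrink it by an unbounded factor even on $\spn(Z)$. Concretely, in $X=\ell_p$ take
\[
Z=\{e_n\colon n\ge 1\}\cup\{x_0\},\qquad x_0=\sum_{n\ge 1}2^{-n/p}e_n .
\]
Then $\overline{\cop(Z)}=B_{\ell_p}$, so $Z$ is isometrically $p$-norming ($C=D=1$); but for $w_N=x_0-\sum_{n=1}^N 2^{-n/p}e_n\in\spn(Z)$ one has $\Vert w_N\Vert_{\ell_p}=2^{-N/p}\to 0$, while any finite representation $w_N/\lambda=\sum_i c_i x_0+\sum_{n\in F}b_n e_n$ forces $\sum_i c_i=1/\lambda$ (compare coordinates outside $F\cup\{1,\dots,N\}$), hence $\sum_i|c_i|^p\ge \lambda^{-p}$ and $\Vert w_N\Vert_{\cop(Z)}\ge 1$. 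So $\Vert\cdot\Vert_{\cop(Z)}$ is not dominated by any multiple of $\Vert\cdot\Vert_X$, and your bound $\Vert Tw\Vert_{X_2}\le C_2D_1\Vert w\Vert_{X_1}$ collapses. (If you instead define everything with the closed hull, the lower bound holds but then your ``exact transport'' step fails, because $T$ is not yet known to be continuous and so need not respect closures; under either reading one of your two steps is unjustified.)

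Moreover this is not a repairable slip in the write-up: the same example shows the statement needs an extra hypothesis, so no proof from the stated assumptions alone can work. Take $X_1=\ell_p$ and $Z_1$ as above, $X_2=\ell_p(\{0\}\cup\Nat)$ with $Z_2=\{e_n\colon n\ge 0\}$, and let $T$ be the one-to-one linear map on $\spn(Z_1)$ (well defined because $x_0$ has infinite support) with $T(x_0)=e_0$ and $T(e_n)=e_n$. All hypotheses hold, both sets are isometrically $p$-norming, yet $w_N\to 0$ in $X_1$ while $\Vert Tw_N\Vert_{X_2}\ge 1$, so $T$ admits no continuous extension at all; and the inverse $S=T^{-1}$ is bounded with $\Vert S\Vert\le 1$, but its continuous extension annihilates the nonzero vector $e_0-\sum_{n\ge 1}2^{-n/p}e_n$, so it is not an isomorphism either. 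What makes the lemma legitimate in every place the paper uses it is that two-sided boundedness of $T$ on the dense spans is available from elsewhere — typically from the universal property of Theorem~\ref{part-case}, or from explicit coefficient estimates as in Theorem~\ref{LpasFreespace}. Once you \emph{add} the hypothesis that $T$ and $T^{-1}$ are bounded for the ambient quasi-norms, your remaining argument is correct: the exact transport $T(\cop(Z_1))=\cop(Z_2)$, density of the spans, continuity of the two extensions, and $\widetilde{T^{-1}}\circ\widetilde{T}=\Id$ by density give an onto isomorphism, and in the isometric case $\widetilde{T}\bigl(\overline{\cop(Z_1)}\bigr)=\overline{\cop(Z_2)}$ yields $\widetilde{T}(B_{X_1})=B_{X_2}$, hence an isometry. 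So either strengthen the hypotheses of the lemma or verify boundedness in each application; it cannot be extracted from the $p$-norming condition by itself.
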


\begin{Lemma}\label{ReductionLemma}  Suppose that $Z$ is a $p$-norming set for a quasi-Banach space $X$ with constants $C_1$ and $C_2$ and that  $Z_0\subseteq Z$. If there is a constant $C$  such that  every $x\in Z$ can be written as $x= \sum_{n=1}^\infty a_n \, x_n$ for some $f=(a_n)_{n=1}^\infty\in \ell_p$ with $\Vert f\Vert_p\le C$ and 
$(x_n)_{n=1}^\infty$ in $Z_0$,
then $Z_0$ is  a $p$-norming set for $X$ with constants $C_1$ and $C C_2$. 
 \end{Lemma}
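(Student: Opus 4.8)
The plan is to verify directly the two inclusions defining a $p$-norming set, namely
\[
\frac{1}{C_1}\,\overline{\cop(Z_0)}\subseteq B_X\subseteq C\,C_2\,\overline{\cop(Z_0)}.
\]
The lower inclusion comes for free from $Z_0\subseteq Z$: this forces $\cop(Z_0)\subseteq\cop(Z)$ and hence $\overline{\cop(Z_0)}\subseteq\overline{\cop(Z)}$, so that $\frac{1}{C_1}\overline{\cop(Z_0)}\subseteq\frac{1}{C_1}\overline{\cop(Z)}\subseteq B_X$, the last step being the hypothesis that $Z$ is $p$-norming with lower constant $C_1$. In particular the lower constant is unchanged.

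The whole content is therefore in the upper inclusion, and for this the key reduction is to prove $\overline{\cop(Z)}\subseteq C\,\overline{\cop(Z_0)}$; combined with $B_X\subseteq C_2\,\overline{\cop(Z)}$ this yields $B_X\subseteq C\,C_2\,\overline{\cop(Z_0)}$ at once. First I would observe that the set $C\,\overline{\cop(Z_0)}$ is closed and absolutely $p$-convex: the closure of an absolutely $p$-convex set is again absolutely $p$-convex because the vector operations are continuous in the linear metric topology of $X$, and dilation by a positive scalar plainly preserves absolute $p$-convexity. Since $\cop(Z)$ is by definition the smallest absolutely $p$-convex set containing $Z$, once we establish the pointwise inclusion $Z\subseteq C\,\overline{\cop(Z_0)}$ we automatically get $\cop(Z)\subseteq C\,\overline{\cop(Z_0)}$, and passing to the closure costs nothing as the right-hand set is already closed.

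It remains to show $Z\subseteq C\,\overline{\cop(Z_0)}$. Fix $x\in Z$ and use the hypothesis to write $x=\sum_{n=1}^\infty a_n\,x_n$ with $x_n\in Z_0$ and $\Vert (a_n)_{n}\Vert_p\le C$. Putting $b_n=a_n/C$, the vector $C^{-1}x=\sum_n b_n x_n$ has coefficients with $\sum_n|b_n|^p\le 1$. The crucial point is that each partial sum $s_N=\sum_{n=1}^N b_n x_n$ satisfies $\sum_{n=1}^N|b_n|^p\le 1$, so, since $\cop(Z_0)$ is absolutely $p$-convex and contains $Z_0$, an immediate induction on $N$ gives $s_N\in\cop(Z_0)$. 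As $s_N\to C^{-1}x$ in $X$, the limit belongs to $\overline{\cop(Z_0)}$, that is $x\in C\,\overline{\cop(Z_0)}$, as required.

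I do not expect a genuine obstacle here; the only thing demanding a little care is the passage between the infinite $\ell_p$-expansion furnished by the hypothesis and the finite $p$-convex combinations that populate $\cop(Z_0)$, which is handled by the partial-sum-and-closure argument above, together with the routine remark that dilation and closure both preserve absolute $p$-convexity.
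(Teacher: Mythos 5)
Your proof is correct and follows essentially the same route as the paper's: establish $Z\subseteq C\,\overline{\cop(Z_0)}$, deduce $\overline{\cop(Z)}\subseteq C\,\overline{\cop(Z_0)}$, and conclude via the chain of inclusions $\frac{1}{C_1}\overline{\cop(Z_0)}\subseteq\frac{1}{C_1}\overline{\cop(Z)}\subseteq B_X\subseteq C_2\overline{\cop(Z)}\subseteq CC_2\overline{\cop(Z_0)}$. The only difference is that you spell out the two steps the paper leaves implicit (the partial-sum argument showing the infinite expansion lands in $\overline{\cop(Z_0)}$, and the fact that $C\,\overline{\cop(Z_0)}$ is closed and absolutely $p$-convex so minimality of $\cop(Z)$ applies), which is a matter of detail, not of method.
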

 
\begin{proof}By hypothesis $Z\subseteq C \, \overline{\text{co}_p(Z_0)}$.
Therefore $\overline{\cop(Z)}\subseteq C \, \overline{\cop(Z_0)}$, and so
\[
\frac{1}{C_1} \overline{\cop(Z_0)} \subseteq  \frac{1}{C_1} \overline{\cop(Z)}\subseteq B_X\subseteq  C_2\, \overline{\cop(Z)}\subseteq C C_2 \, \overline{\cop(Z_0)}.\qedhere
\]
 \end{proof}

\section{The metric envelope of a quasimetric space}\label{MetricEnv}

\noindent Suppose $(\MM, \rho)$ is a pointed quasimetric space.  By analogy with the universal extension property of the Banach envelope of a quasi-Banach space, we are interested in the question on how to construct a metric space $(\widetilde{\MM}, \widetilde{\rho})$, and a map $Q\colon \MM\to \widetilde\MM$ with $\LipOp(Q)\le 1$ such that whenever $(M, d)$ is a metric space and $f\colon \MM \to M$   verifies the Lipschitz condition
\begin{equation}\label{Lipsc}
d(f(x),f(y)) \le C \rho(x,y), \qquad x,y\in \MM,
\end{equation}
then $f$ induces a Lipschitz map $\widetilde f\colon\widetilde{\MM}\to M$  with $f=\widetilde f\circ Q$ and  $d(\widetilde f(x),\widetilde f(y)) \le C \widetilde\rho(x,y)$ for all  $x,y\in \widetilde{\MM}.$ 

Note that if $f$ verifies \eqref{Lipsc}, then we will have
\[
d(f(x), f(y))\le C\sum_{i=0}^{n}\rho(x_{i}, x_{i+1}),
\]
for any finite sequence $x=x_{0}, x_{1}, \dots, x_{n+1}=y$ of (possibly repeated) points in $\MM$.
Therefore, in all fairness  we define, for $x,y\in \MM$,
\begin{equation}\label{MetricEnvelope}
\widetilde\rho(x,y)=\inf\sum_{i=0}^{n}\rho(x_{i}, x_{i+1}),
\end{equation}
where the infimum is taken over all  sequences $x=x_{0}, x_{1}, \dots, x_{n+1}=y$ of  finitely-many points in $\MM$. Clearly, $\widetilde\rho$ is symmetric, satisfies the triangle inequality, and does not exceed $\rho$.  Before going on, let us point out that $\widetilde\rho(x,y)$ can be zero for different points $x,y$ in $\MM$.

\begin{Example}\label{trivialmetricspace}  A metric space
$(\MM, d)$ is metrically convex (see  \cite{BenLin2000}) if  for every $x,y\in \MM$ and any $0<\lambda<1$ there exists $z_{\lambda}\in \MM$ with 
\[
d(x,z_{\lambda})=\lambda d(x,y)\quad \text{and}\quad d(y, z_{\lambda})=(1-\lambda)d(x,y).
\]
Let $(\MM, d)$ be  a metrically convex space and, for $0<p<1$, consider the $p$-metric $\rho = d^{1/p}$ on $\MM$. Then $\widetilde\rho(x,y)=0$ for any $x,y\in \MM$. Indeed, given $x\not=y$ in $\MM$, by the metric convexity of $\MM$  for every $n\in \Nat$ we can find a chain of points $\{x_{0},x_{1},\dots, x_{n}\}$ where $x_{0}=x$, $x_{n}=y$, and $d(x_{j-1},x_{j})=d(x,y)/n$ for each $j=1,2,\dots, n$. By  the definition we then have
\[
\widetilde\rho(x,y)\le \left(\frac{d(x,y)}{n}\right)^{1/p}n=\frac{d(x,y)}{n^{1/p-1}}\to 0.
\]
Thus, $\widetilde\rho(x,y)=0$.  
\end{Example}

In view of that, we shall identify points in $\MM$ that are at a zero $\widetilde\rho$-distance, which  leads to the following definition.

\begin{Definition}\label{def:metricenvelope} Let $(\MM,\rho)$ be a quasimetric space and $\tilde\rho$  as in \eqref{MetricEnvelope}. We consider the equivalence relation
\[
x\sim y \Longleftrightarrow \widetilde\rho(x,y)=0,
\]
and define $\widetilde{\MM}$ to be the quotient space $\MM/\sim$.  If $\widetilde x$ and $\widetilde y$ denote the respective equivalence classes of $x$ and $y$, we put  $\tilde\rho(\widetilde x,\tilde y)=\widetilde\rho(x,y)$. The metric space  $(\widetilde{\MM}, {\widetilde \rho})$, together with the quotient map $Q\colon\MM\to\widetilde{\MM}$ will be called the \textit{metric envelope} of  $(\MM,\rho)$.
\end{Definition}

Our discussion yields that the metric envelope of a quasimetric space   is characterized by  the following universal property.

\begin{Theorem}\label{UniversalPropertyMetricEncv} Suppose $(\widetilde \MM,\widetilde \rho,Q)$ is the metric envelope of a quasimetric space  $(\MM,\rho)$. Then:
\begin{enumerate}
\item[(i)] $\LipOp(Q)=1$, and
\item[(ii)] whenever $(M, d)$ is a metric space and $f\colon (\MM,\rho) \to (M,d)$ is $C$-Lipschitz, there is a unique map $\widetilde f\colon (\widetilde{\MM}, \widetilde \rho)\to (M,d)$ such that $f= \widetilde f\circ Q$ is $C$-Lipschitz. Pictorially,
\begin{equation*}
\xymatrix{(\MM,\rho) \ar[rr]^f \ar[dr]_{Q} & & (M, d)\\
& (\widetilde{\MM}, {\widetilde \rho}) \ar[ur]_{{\widetilde f}} &}
\end{equation*}
\end{enumerate}  
\end{Theorem}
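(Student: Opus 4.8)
The plan is to treat item (ii) as the substantive part and to derive (i) from the chain definition \eqref{MetricEnvelope} together with a short self-referential estimate. Throughout I would rely on the three properties recorded right after \eqref{MetricEnvelope}---that $\widetilde\rho$ is symmetric, obeys the triangle inequality, and satisfies $\widetilde\rho\le\rho$---and on the fact, implicit in Definition~\ref{def:metricenvelope}, that $\widetilde\rho$ descends to a genuine metric on $\widetilde\MM$; well-definedness on $\sim$-classes is immediate from the triangle inequality, since $\widetilde\rho(x,x')=0$ forces $\widetilde\rho(x,y)=\widetilde\rho(x',y)$.

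For (i), I would first observe that $\LipOp(Q)\le1$ is automatic: taking the trivial chain $x_0=x,\ x_1=y$ in \eqref{MetricEnvelope} gives $\widetilde\rho(Q(x),Q(y))=\widetilde\rho(x,y)\le\rho(x,y)$. For the reverse inequality I would argue, under the standing assumption that $\widetilde\MM$ has more than one point, as follows. Write $c=\LipOp(Q)$ and pick $x,y$ with $\widetilde\rho(x,y)>0$; then $c\ge\widetilde\rho(x,y)/\rho(x,y)>0$. By definition of $c$ we have $\widetilde\rho(a,b)\le c\,\rho(a,b)$, i.e.\ $\rho(a,b)\ge c^{-1}\widetilde\rho(a,b)$, for all $a,b$. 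Hence for every chain $x=z_0,\dots,z_{n+1}=y$ the triangle inequality for $\widetilde\rho$ yields $\sum_i\rho(z_i,z_{i+1})\ge c^{-1}\sum_i\widetilde\rho(z_i,z_{i+1})\ge c^{-1}\widetilde\rho(x,y)$; taking the infimum over chains gives $\widetilde\rho(x,y)\ge c^{-1}\widetilde\rho(x,y)$, and dividing by $\widetilde\rho(x,y)>0$ forces $c\ge1$. Together with $c\le1$ this gives $\LipOp(Q)=1$.

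For (ii), I would construct $\widetilde f$ directly and verify each requirement in turn. The first and only point that needs an argument is that $f$ is constant on $\sim$-classes, so that setting $\widetilde f(\widetilde x):=f(x)$ is legitimate: if $\widetilde\rho(x,y)=0$, then for every chain $x=x_0,\dots,x_{n+1}=y$ the $C$-Lipschitz hypothesis and the triangle inequality in $M$ give $d(f(x),f(y))\le\sum_i d(f(x_i),f(x_{i+1}))\le C\sum_i\rho(x_i,x_{i+1})$, so taking the infimum over chains yields $d(f(x),f(y))\le C\,\widetilde\rho(x,y)=0$, that is $f(x)=f(y)$. Then $f=\widetilde f\circ Q$ holds by definition, and the very same chain estimate applied to an arbitrary pair shows $d(\widetilde f(\widetilde x),\widetilde f(\widetilde y))=d(f(x),f(y))\le C\,\widetilde\rho(x,y)=C\,\widetilde\rho(\widetilde x,\widetilde y)$, so $\widetilde f$ is $C$-Lipschitz. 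Uniqueness follows from surjectivity of $Q$, since any $g$ with $f=g\circ Q$ satisfies $g(\widetilde x)=g(Q(x))=f(x)=\widetilde f(\widetilde x)$.

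The only genuinely non-mechanical step I anticipate is the lower bound in (i); the rest of (ii) is forced once one notices that the $C$-Lipschitz condition propagates along chains in exactly the way the defining infimum \eqref{MetricEnvelope} does, which is precisely why that infimum is the correct definition. I expect no serious obstacle, but I would be careful to state (i) under the assumption that $\widetilde\MM$ is nondegenerate, since the collapsing phenomenon of Example~\ref{trivialmetricspace} makes $Q$ constant and yields $\LipOp(Q)=0$ in the extreme case.
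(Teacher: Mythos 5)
Your proposal is correct, and for part (ii) it follows essentially the route the paper itself takes: the paper offers no separate proof of Theorem~\ref{UniversalPropertyMetricEncv}, stating instead that it follows from the discussion preceding Definition~\ref{def:metricenvelope}, and that discussion is exactly your chain estimate $d(f(x),f(y))\le C\sum_{i}\rho(x_i,x_{i+1})$, which simultaneously shows that $f$ is constant on $\sim$-classes and that the induced $\widetilde f$ is $C$-Lipschitz; uniqueness from surjectivity of $Q$ is the same routine step. Where you add something is part (i), which the paper leaves entirely implicit: your self-referential argument (if $c=\LipOp(Q)<1$ then every chain has $\rho$-length at least $c^{-1}\widetilde\rho(x,y)$, so taking the infimum forces $\widetilde\rho(x,y)\ge c^{-1}\widetilde\rho(x,y)$, hence $\widetilde\rho\equiv 0$) is sound, and your closing caveat is a genuine catch rather than pedantry: when the envelope collapses to a point, as in Example~\ref{trivialmetricspace} or Example~\ref{metricenvelopeLp}, the quotient map is constant and $\LipOp(Q)=0$, so statement (i) as printed is literally false in the degenerate case and must be read either under the hypothesis that $\widetilde\MM$ is nontrivial or as the inequality $\LipOp(Q)\le 1$. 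In short, your write-up proves everything the paper's discussion proves, and in addition supplies the missing argument for (i) together with the correct hypothesis under which it holds.
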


\begin{Remark}\label{Lemma1} Theorem~\ref{UniversalPropertyMetricEncv} can be rephrased as saying that for every metric space $(M,d)$ the mapping $g\mapsto Q\circ g$ defines an isometry from  $\Lip_{0}(\widetilde{\MM},M)$ onto $\Lip_{0}(\MM,M)$, and so these two spaces can be naturally identified.
\end{Remark}

Note that, in this language,  Example~\ref{trivialmetricspace} yields that for $0<p<1$, the metric envelope of $\Rea$ equipped with the $p$-metric $\rho(x,y)=|x-y|^{1/p}$ is trivial. On the other hand, by \cite{Albiac2008}*{Lemma 2.7}, $\Lip_0(\mathbb R,\rho)=\{0\}$.   Next we see that this is not a coincidence.

\begin{Proposition}\label{propoor} Given a quasimetric space $(\MM,\rho)$ the following are equivalent.
\begin{itemize} 
\item[$\bullet$] $(\widetilde\MM,\tilde\rho)$ is trivial.

\item[$\bullet$] $\Lip_{0}(\MM, M)=\{0\}$ for any metric space $(M,d)$.
\item[$\bullet$] $\Lip_0(\MM)=\{0\}$.
\end{itemize}
\end{Proposition}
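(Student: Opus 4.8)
The plan is to run the equivalences cyclically, proving
\emph{(triviality of $(\widetilde\MM,\widetilde\rho)$)} $\Rightarrow$ \emph{($\Lip_0(\MM,M)=\{0\}$ for every metric $M$)} $\Rightarrow$ \emph{($\Lip_0(\MM)=\{0\}$)} $\Rightarrow$ \emph{(triviality of $(\widetilde\MM,\widetilde\rho)$)}. The first two implications are soft and rest on the machinery already in place; the genuine content sits in the last one.

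For the first implication I would invoke the universal property. By Theorem~\ref{UniversalPropertyMetricEncv}, every Lipschitz map $f\colon\MM\to M$ factors as $f=\widetilde f\circ Q$ through the quotient map $Q\colon\MM\to\widetilde\MM$; if $\widetilde\MM$ consists of a single point, then $\widetilde f$, and hence $f$, is constant, and the normalization at the origin forces $f=0$. Equivalently, one reads this directly off Remark~\ref{Lemma1}: the identification $\Lip_0(\widetilde\MM,M)\cong\Lip_0(\MM,M)$ reduces the claim to the obvious fact that $\Lip_0(\widetilde\MM,M)=\{0\}$ when $\widetilde\MM$ is a point. The second implication is immediate upon specializing $M=\Rea$ with its usual metric, so that $\Lip_0(\MM,\Rea)=\Lip_0(\MM)$.

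The heart of the matter is the implication $\Lip_0(\MM)=\{0\}\Rightarrow(\widetilde\MM,\widetilde\rho)$ trivial, which I would prove by contraposition: assuming $(\widetilde\MM,\widetilde\rho)$ is nontrivial, I produce a nonzero element of $\Lip_0(\MM)$. The natural candidate is the distance-to-the-origin function of the metric envelope,
\[
f(x)=\widetilde\rho(x,0),\qquad x\in\MM .
\]
Then $f(0)=\widetilde\rho(0,0)=0$, and since $\widetilde\rho$ satisfies the triangle inequality and $\widetilde\rho\le\rho$ (both recorded right after \eqref{MetricEnvelope}), we get $|f(x)-f(y)|\le\widetilde\rho(x,y)\le\rho(x,y)$, so $f$ is $1$-Lipschitz on $(\MM,\rho)$ and therefore $f\in\Lip_0(\MM)$.

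It remains to verify $f\not\equiv 0$, i.e.\ that some point of $\MM$ lies at positive $\widetilde\rho$-distance from the origin, and this is the one step with any substance. Nontriviality of $\widetilde\MM$ means there are $u,v\in\MM$ with $\widetilde\rho(u,v)>0$. If both $\widetilde\rho(u,0)=0$ and $\widetilde\rho(v,0)=0$ held, the triangle inequality for $\widetilde\rho$ would yield $\widetilde\rho(u,v)\le\widetilde\rho(u,0)+\widetilde\rho(0,v)=0$, a contradiction; hence at least one of $u,v$ has positive $\widetilde\rho$-distance from $0$, and $f$ is not identically zero. Everything else in the argument is bookkeeping, so I expect this separation observation to be the only real obstacle.
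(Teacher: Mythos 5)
Your proposal is correct and follows essentially the same route as the paper: the same cyclic chain of implications, with the first two steps handled by Theorem~\ref{UniversalPropertyMetricEncv}/Remark~\ref{Lemma1} and specialization to $M=\Rea$. Your explicit witness $f(x)=\widetilde\rho(x,0)$ is precisely the pullback through $Q$ of the distance-to-origin function on $\widetilde\MM$, which is what the paper's ``clearly $\Lip_0(\widetilde\MM)$ is non-trivial'' plus Remark~\ref{Lemma1} amounts to; you have merely written out the verification the paper leaves implicit.
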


\begin{proof} If $(\widetilde\MM,\tilde\rho)$ is trivial it is clear that $\Lip_0(\widetilde \MM, M)=\{0\}$ for any metric space $M$. Using Remark~\ref{Lemma1} we get $\Lip_0(\MM, M)=\{0\}$. 

If  $\Lip_{0}(\MM, M)=\{0\}$ for any metric space $M$  in particular it holds for $M=\Rea$, i.e.,   $\Lip_0(\MM)=\{0\}$.

Finally, if  $(\widetilde\MM,\tilde\rho)$ is non-trivial then clearly $\Lip_0(\widetilde\MM)$ is non-trivial and so by Remark~\ref{Lemma1} we get $\Lip_0(\MM)\not=\{0\}$.
\end{proof}

\begin{Example}\label{metricenvelopeLp} Let $0<p<1$. We know that the $p$-metric space $L_{p}[0,1]$ equipped with the usual $p$-metric induced by the $p$-norm, given by 
\[\rho(f,g)=\Vert f-g\Vert_{p},\quad f, g\in L_{p}[0,1]\] has $\Lip_0(L_p[0,1])=\{0\}$ (see \cite{Albiac2008}*{Proposition 2.8}). Then, by Proposition~\ref{propoor} we infer that its metric envelope is trivial.
\end{Example}

Let us next show that the fact $\widetilde{L_{p}[0,1]}=\{0\}$ is related to the well-known property that the Banach envelope of the $p$-Banach space $L_{p}[0,1]$ for $0<p<1$ is trivial. In fact, metric  and Banach envelopes are related by the following result.
\begin{Proposition}\label{prop:metricvsnormed} Let $(X,\Vert \cdot\Vert)$ be a $p$-normed space. Consider on $X$ the $p$-metric  $\rho$  given by $\rho(x,y)=\Vert x-y\Vert$ and let $0$ be the distinguished point of $X$. Then $\widetilde\rho(x,y)=\Vert x-y \Vert_c$ for all $x,y\in X$, where $\Vert\cdot\Vert_{c}$ is the norm introduced in Definition~\ref{BanEnvelnorm}.
\end{Proposition}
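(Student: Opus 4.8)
The plan is to prove the two inequalities separately, exploiting the translation invariance of the $p$-metric together with the explicit formula \eqref{BEnorm} for the envelope norm in the case $q=1$. Since $\rho(u,v)=\Vert u-v\Vert$, the definition \eqref{MetricEnvelope} expresses $\widetilde\rho(x,y)$ as an infimum of \emph{telescoping} sums of norms, whereas \eqref{BEnorm} expresses $\Vert x-y\Vert_c$ as an infimum of $\sum_i\Vert z_i\Vert$ over all finite decompositions $\sum_i z_i=x-y$. The crux is simply that chains joining $x$ to $y$ and finite decompositions of $y-x$ are in bijective correspondence, with matching sums of norms; so the two infima coincide.

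First I would show $\widetilde\rho(x,y)\ge \Vert x-y\Vert_c$. Given any finite chain $x=x_0,x_1,\dots,x_{n+1}=y$, put $z_i=x_i-x_{i-1}$ for $1\le i\le n+1$. These increments telescope, $\sum_{i=1}^{n+1}z_i=x_{n+1}-x_0=y-x$, and by translation invariance $\sum_{i=0}^{n}\rho(x_i,x_{i+1})=\sum_{i=1}^{n+1}\Vert z_i\Vert$. Thus every chain sum in \eqref{MetricEnvelope} is one of the quantities over which the infimum in \eqref{BEnorm} for $\Vert y-x\Vert_c$ is taken; taking the infimum over chains gives $\widetilde\rho(x,y)\ge \Vert y-x\Vert_c$, which equals $\Vert x-y\Vert_c$ by homogeneity (property (ii) applied with $\alpha=-1$).

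For the reverse inequality, I would start from an arbitrary finite decomposition $y-x=\sum_{i=1}^{m}z_i$ with $z_i\in X$ and build the chain $x_0=x$ and $x_k=x+\sum_{i=1}^{k}z_i$ for $1\le k\le m$, noting that $x_m=x+(y-x)=y$. The consecutive increments are exactly $x_k-x_{k-1}=z_k$, so the chain sum equals $\sum_{i=1}^{m}\Vert z_i\Vert$. Hence $\widetilde\rho(x,y)$ is bounded above by every admissible quantity in \eqref{BEnorm}, and taking the infimum yields $\widetilde\rho(x,y)\le \Vert x-y\Vert_c$.

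There is no genuine obstacle here; the statement follows from a direct identification. The only points requiring care are the index bookkeeping in the telescoping correspondence (ensuring the endpoints $x_0=x$ and $x_{n+1}=y$ are respected in both directions), invoking the formula \eqref{BEnorm} of the preceding Lemma with $q=1$ rather than the Minkowski-functional definition of $\Vert\cdot\Vert_c$ from Definition~\ref{BanEnvelnorm}, and using homogeneity to pass freely between $\Vert x-y\Vert_c$ and $\Vert y-x\Vert_c$.
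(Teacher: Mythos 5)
Your proof is correct and is essentially the paper's own argument: the paper also identifies chains $x=x_0,\dots,x_{n+1}=y$ with finite decompositions $\sum_j x_j = y-x$ via translation invariance of $\rho$ and then concludes by the formula \eqref{BEnorm} with $q=1$. The only cosmetic difference is that the paper states this correspondence as a single bijection between the two index sets (so the infima coincide at once), whereas you split it into the two inequalities.
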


\begin{proof}The set of all tuples $(y_j)_{j=0}^n$ with $y_0=x$  and $y_n=y$  coincides with the set of all tuples of the form $(x+\sum_{k=0}^j x_k)_{j=0}^n$, where $x_0=0$ and $\sum_{j=1}^n x_j=y-x$. Hence,
\begin{align*}
\widetilde\rho(x,y)
&=\inf\left\{ \sum_{j=1}^n \rho\left(x+\sum_{k=1}^j x_k, x+\sum_{k=1}^{j-1} x_k\right) \colon \sum_{j=1}^n x_j=y-x\right\}\\
&=\inf\left\{ \sum_{j=1}^n \Vert x_j\Vert \colon \sum_{j=1}^n x_j=y-x\right\}
=\Vert y-x\Vert_c.
\qedhere\end{align*}
\end{proof}

\begin{Remark} Note that it is possible to extend Definition~\ref{def:metricenvelope} and Theorem~\ref{UniversalPropertyMetricEncv}  to the case when $0<q<1$. Indeed, given a pointed quasimetric space $(\MM,\rho)$  we  define its \textit{$q$-metric envelope} $(\widetilde{\MM^{q}}, \widetilde{\rho_{q}})$ following the same steps as in the construction of its metric envelope. The $q$-metric $\widetilde{\rho_{q}}$ is given by
\[
\widetilde\rho_{q}(x,y)=\inf\left(\sum_{i=0}^{n}\rho^{q}(x_{i}, x_{i+1})\right)^{1/q},\qquad x,y\in \MM,
\]
the infimum being taken over all finite sequences $x=x_{0}, x_{1}, \dots, x_{n+1}=y$ of points in $\MM$, and $\widetilde{\MM^{q}}$ is the quotient space $\MM/\sim_{q}$. The equivalence relation here is the expected one, i.e.,
 \[
x\sim_{q} y \Longleftrightarrow \widetilde\rho_{q}(x,y)=0.
\]
Thus $(\widetilde{\MM^{q}}, \widetilde{\rho_{q}})$  is the   pointed $q$-metric space (having as a distinguished point the equivalence class of $0$) characterized by  the following universal property:  whenever $(M, d)$ is a $q$-metric space and $f\colon (\MM,\rho) \to (M,d)$ is $C$-Lipschitz then the map $\widetilde f\colon (\widetilde{\MM^{q}}, \widetilde{\rho_{q}}) \to (M,d)$ such that $f= \widetilde f\circ Q$ is $C$-Lipschitz, where $Q\colon \MM\to \widetilde{\MM^{q}}$ denotes the canonical quotient map:
\[
\xymatrix{(\MM,\rho) \ar[rr]^f \ar[dr]_{Q} & & (M, d)\\
&(\widetilde{\MM^{q}}, \widetilde{\rho_{q}}) \ar[ur]_{{\widetilde f}} &}
\]

Notice also  that if we regard a $p$-Banach space $(X,\Vert\cdot\Vert_{X})$   as a pointed $p$-metric space in the obvious way (i.e., by taking $0$ as the origin of the vector space $X$ equipped with the $p$-metric $\rho(x,y)=\|x-y\|_{X}$),  Proposition~\ref{prop:metricvsnormed} can be generalized as well, and we can come to the conclusion that the  $q$-Banach envelope of $X$ is the completion of its $q$-metric envelope. We leave out for the reader to check the straightforward details.
\end{Remark}

\section{Lipschitz free $p$-spaces over quasimetric spaces}\label{Sec4}

\noindent Every metric space embeds isometrically into a Banach space. Similarly, the natural environment to isometrically embed quasimetric spaces   will be $p$-Banach spaces.
Notice that for  $0<p<1$, every pointed $p$-metric space $\MM$  embeds isometrically into a ``huge'' $p$-Banach space, namely the space $Y=\ell_{\infty}(\MM;L_{p}(0,\infty))$ of bounded functions from $\MM$ into the real space $(L_{p}(0,\infty),\Vert\cdot\Vert_{p})$ endowed with the  $p$-norm
\[
\Vert f\Vert_{Y}=\sup_{x\in \MM}\Vert f(x)\Vert_{p}.
\]
Indeed, with the convention that $\chi_{(a,b]}=-\chi_{(b,a]}$ if $a>b$, the map $\Psi\colon\MM\to \ell_{\infty}(\MM;L_{p}(0,\infty))$ given by
$
\Psi(x)=\left(\chi_{(\rho^{\, p}(0,y),\rho^{\, p}(x,y)]}\right)_{y\in\MM}
$
does  the job (see \cite{AlbiacKalton2009}*{Proposition 3.3}).
Of course, depending on the $p$-metric space we can find  simpler  (isometric) embeddings, like the map 
 \begin{equation}\label{eq:embedLp}
 \Phi\colon (\Rea,|\cdot|^{1/p})\to L_p(\Rea), \quad
 \Phi(x)=\chi_{(0,x]}.
 \end{equation}

Once we have  accomplished the task to  embed  a $p$-metric space $\MM$ into a $p$-Banach space, it seems natural to look for an ``optimal'' way to do it, in the sense that  every Lipschitz map from $\MM$ into a $p$-Banach space factors through it. The following construction from \cite{AlbiacKalton2009} attains this goal.

Let $\Rea_{0}^{\MM}$ be the space of all (not necessarily continuous) maps  $f\colon \MM\to\Rea$ so that $f(0)=0$ and let $\PP(\MM)$  be the linear span in the linear dual $(\Rea_{0}^{\MM})^{\#}$ of the evaluations $\delta(x)$, where $x$ runs through $\MM$, defined by
\begin{equation}\label{Delta}
\langle \delta(x),f\rangle=f(x), \qquad f\in \Rea_{0}^{\MM}.
\end{equation}
 Note that $\delta(0)=0.$ 

If $\mu=\sum_{j=1}^{N}a_j\delta(x_j)\in\PP(\MM)$, put
\begin{equation}\label{definition}
 \|\mu\|_{_{\F_{p}(\MM)}}=\sup \left\|\sum_{j=1}^N a_jf(x_j)\right\|_{Y},
 \end{equation}
the supremum being taken over all $p$-normed spaces $(Y,\Vert\cdot\Vert_{Y})$ and all $1$-Lipschitz maps $f\colon \MM\to Y$ with $f(0)=0$. It is straightforward to check that formula \eqref{definition} defines a $p$-seminorm on $\PP(\MM)$. In fact, the following proposition shows that $ \|\cdot\|_{_{\F_p(\MM)}}$ is a $p$-norm, thus settling a question posed in \cite{AlbiacKalton2009}.

\begin{Proposition}\label{linindep} 
Let  $({\MM},\rho)$ be a pointed $p$-metric space, $0<p\le 1$. Then $(\PP(\MM),\|\cdot\|_{_{\F_{p}(\MM)}})$ is a $p$-normed space.
\end{Proposition}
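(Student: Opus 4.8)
The plan is to exploit that the excerpt already records that \eqref{definition} defines a $p$-seminorm on $\PP(\MM)$, so the only thing left to establish is positive definiteness: if $0\neq\mu\in\PP(\MM)$ then $\|\mu\|_{_{\F_{p}(\MM)}}>0$. I would write $\mu=\sum_{j=1}^{N}a_{j}\delta(x_{j})$ with the $x_{j}$ pairwise distinct points of $\MM\setminus\{0\}$ (the term $\delta(0)=0$ is discarded). First I would note that the evaluations $\delta(x_{1}),\dots,\delta(x_{N})$ are linearly independent in $(\Rea_{0}^{\MM})^{\#}$: for each $i$ pick $f_{i}\in\Rea_{0}^{\MM}$ with $f_{i}(x_{i})=1$ and $f_{i}(x_{j})=0$ for $j\neq i$ (this is consistent with $f_{i}(0)=0$ since $x_{i}\neq 0$); then $\langle\delta(x_{j}),f_{i}\rangle$ equals $1$ when $j=i$ and $0$ otherwise. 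Hence $\mu\neq0$ forces $a_{j_{0}}\neq0$ for some index $j_{0}$.

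The core of the argument is to produce a single $1$-Lipschitz map into a $p$-normed space that ``sees'' $\mu$. Since the paper stresses that the scalar space $\Lip_{0}(\MM)$ may be trivial, I cannot use real-valued test functions and must go vector-valued. Taking $z=x_{j_{0}}$ and $Y=L_{p}(0,\infty)$, I would define $f\colon\MM\to L_{p}(0,\infty)$ by $f(x)=\chi_{(\rho^{\,p}(0,z),\rho^{\,p}(x,z)]}$, with the sign convention $\chi_{(a,b]}=-\chi_{(b,a]}$ for $a>b$. Then $f(0)=0$, and because $\rho^{\,p}$ is a metric the reverse triangle inequality gives
\[
\|f(x)-f(x')\|_{p}^{p}=\bigl|\rho^{\,p}(x,z)-\rho^{\,p}(x',z)\bigr|\le\rho^{\,p}(x,x'),
\]
whence $\LipOp(f)\le 1$ after taking $p$-th roots. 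In fact $f$ is nothing but the $z$-th coordinate of the universal embedding $\Psi$ recalled above, so its Lipschitz estimate already follows from $\Psi$ being isometric and no genuinely new computation is needed.

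It then remains to check that $\sum_{j=1}^{N}a_{j}f(x_{j})\neq0$ in $L_{p}(0,\infty)$. Writing $s=\rho^{\,p}(0,x_{j_{0}})>0$ and $t_{j}=\rho^{\,p}(x_{j},x_{j_{0}})$, one has $f(x_{j_{0}})=-\chi_{(0,s]}$, while $t_{j}>0$ for every $j\neq j_{0}$. Choosing $\varepsilon$ with $0<\varepsilon<\min\{s,\min_{j\neq j_{0}}t_{j}\}$ — a positive quantity since there are only finitely many distinct points — every summand except the $j_{0}$-th vanishes on $(0,\varepsilon]$, so $\sum_{j}a_{j}f(x_{j})$ equals the nonzero constant $-a_{j_{0}}$ there. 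Thus this step function is nonzero on a set of positive measure, and therefore
\[
\|\mu\|_{_{\F_{p}(\MM)}}\ge\Bigl\|\sum_{j=1}^{N}a_{j}f(x_{j})\Bigr\|_{p}>0,
\]
which completes the proof. The only delicate point in the whole argument is this last bookkeeping with the sign convention together with the choice of $\varepsilon$, which guarantees that no other atoms of the step function interfere on the interval where $f(x_{j_{0}})$ is detected; everything else is routine.
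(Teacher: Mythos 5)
Your proof is correct, but it takes a genuinely different route from the paper's. The paper also reduces the statement to positive definiteness of the $p$-seminorm, but it extracts the coefficients exactly: for each $i$ it invokes McShane's extension theorem to produce a Lipschitz map $g\colon(\MM,\rho^{\,p})\to(\Rea,|\cdot|)$ with $g(x_i)=1$, $g(x_j)=0$ for $j\neq i$ and $g(0)=0$, and then composes with the embedding $\Phi(x)=\chi_{(0,x]}$ of $(\Rea,|\cdot|^{1/p})$ into $L_p(\Rea)$; this yields $\sum_j a_j f(x_j)=a_i\,\chi_{(0,1]}$, so $\|\mu\|_{_{\F_p(\MM)}}=0$ forces every $a_i=0$, with no localization step and no need for the preliminary linear-independence observation. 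You instead take as test map a single coordinate of the universal embedding $\Psi$ --- equivalently, $\Phi$ composed with the distance function $x\mapsto\rho^{\,p}(x,x_{j_0})$, which is $1$-Lipschitz by the reverse triangle inequality for the metric $\rho^{\,p}$ --- thereby avoiding McShane altogether. The price is that your $f$ does not vanish at the other points $x_j$, so you must argue via disjointness of supports on a small interval $(0,\varepsilon]$ to isolate the single coefficient $a_{j_0}$, and you need the (trivial, but necessary) linear independence of the evaluations $\delta(x_j)$ beforehand to know that some coefficient is nonzero. Both arguments are sound and of comparable length: yours is more self-contained, using only the triangle inequality and the interval structure of $L_p$, whereas the paper's exact-separation device makes the final computation immediate and rehearses the snowflaking-plus-McShane technique that reappears later in the theory (e.g., in the discussion of subset embeddings for $p=1$).
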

\begin{proof} 
Suppose that  $\Vert \sum_{j=1}^N a_j \delta(x_j)\Vert_{_{\F_{p}(\MM)}}=0$  for some  $(a_j)_{j=1}^{N}$  scalars and some  $(x_j)_{j=1}^{N}$ in ${\MM}\setminus\{0\}$. Then  $\sum_{j=1}^N a_j f(x_j)=0$ for every $p$-Banach space $X$ and every Lipschitz map $f\colon {\MM}\to X$ with $f(0)=0$. 

Pick $i\in\{1,\dots,N \}$ and for the sake of convenience denote the distinguished point  of $\MM$  by $x_0$. Since the set $\NN=\{x_j \colon 0\le j\le N\}$ is finite, the map
from the metric space $(\NN,\rho^{\, p})$ into $(\Rea,|\cdot|)$ given by $x_i\mapsto 1$ and $x_j\mapsto 0$ for $j\not=i$ is  Lipschitz. By McShane's theorem, it extends to a  Lipschitz map $g$ from $(\MM,\rho^{\, p})$ into $(\Rea,|\cdot|)$. In other words, the map $g$ is Lipschitz  from $(\MM,\rho)$ into $(\Rea,|\cdot|^{1/p})$. If $\Phi$ is as in \eqref{eq:embedLp}, then
 $f:=\Phi\circ g\colon\MM\to L_p(\Rea)$ is  Lipschitz as well. Since $f(0)=0$, we infer that $a_i \chi_{(0,1]}=\sum_{j=1}^N a_j f(x_j)=0$. Hence, $a_i=0$.
\end{proof}

\begin{Definition}[cf.\ \cite{AlbiacKalton2009}] Given a $p$-metric space $\MM$, the  \textit{Lipschitz free $p$-space over $\MM$}, denoted by $\F_{p}(\MM)$,  is the $p$-Banach space resulting from the completion of  the $p$-normed space $(\PP(\MM), \Vert\cdot\Vert_{_{\F_{p}(\MM)}})$. We will refer to the map $\delta_\MM\colon \MM \to \F_p(\MM)$ given by  $\delta_\MM(x)=\delta(x)$ as  the natural embedding of $\MM$ into  $\F_p(\MM)$.
\end{Definition}

In \cite{Kalton2004}, Kalton uses the symbol $\mathcal F_{\omega}(\mathcal M)$ to denote Lipschitz-free Banach spaces associated with metric spaces equipped with distances $\omega\circ d$ that arise after snowflaking, where $\omega$ is a gauge. This is of course different from what is considered in the present work, but we want the reader to be warned to avoid possible confusions. Note  that our considerations are also independent of the work of Petitjean in \cite{Petitjean2017}, where he studies Lipschitz-free spaces over metric spaces induced by $p$-norms.

\begin{Remark} The choice of a base point  in $\MM$ is not relevant in the definition of $\F_{p}(\MM)$.  Indeed, if we change the origin in $\MM$ and apply the construction, 
 we have a natural linear isometry between the resulting Lipschitz free $p$-spaces.\end{Remark}

For expositional ease and further reference, let us point out the following easy consequence of the proof of Proposition~\ref{linindep}.
\begin{Lemma}\label{lem:oodim}Let  $({\MM},\rho)$ be an infinite $p$-metric space, $0<p\le 1$. Then $\F_p(\MM)$ is infinite dimensional.
\end{Lemma}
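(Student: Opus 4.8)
The plan is to reduce the statement to the linear independence of the evaluations $\delta(x)$ that is already implicit in the proof of Proposition~\ref{linindep}. Since $\MM$ is infinite and $\delta(0)=0$, the set $\MM\setminus\{0\}$ is infinite, so I can choose a sequence $(x_n)_{n=1}^{\infty}$ of pairwise distinct points of $\MM\setminus\{0\}$. I claim that the family $\{\delta(x_n):n\in\Nat\}$ is linearly independent in $\F_p(\MM)$; granting this, $\PP(\MM)$ contains an infinite linearly independent set, and because $\Vert\cdot\Vert_{_{\F_{p}(\MM)}}$ is a genuine $p$-norm (Proposition~\ref{linindep}), $\PP(\MM)$ embeds isometrically, hence injectively, as a dense linear subspace of its completion $\F_p(\MM)$. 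Consequently $\F_p(\MM)$ is infinite dimensional.

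To verify the claim it suffices to show that every finite subfamily is linearly independent. Fix $N\in\Nat$ and suppose $\sum_{j=1}^{N}a_j\delta(x_j)=0$ in $\F_p(\MM)$ for some scalars $a_1,\dots,a_N$. Since the $p$-norm then vanishes, formula \eqref{definition} gives (after normalizing by the Lipschitz constant) that $\sum_{j=1}^{N}a_j f(x_j)=0$ for every $p$-normed space $Y$ and every Lipschitz map $f\colon\MM\to Y$ with $f(0)=0$. Now I reproduce the construction from the proof of Proposition~\ref{linindep}: fixing $i\in\{1,\dots,N\}$ and writing $x_0=0$, the function on the finite metric space $(\{x_0,\dots,x_N\},\rho^{\,p})$ sending $x_i\mapsto 1$ and $x_j\mapsto 0$ for $j\neq i$ is Lipschitz, so by McShane's theorem it extends to a Lipschitz map $g\colon(\MM,\rho^{\,p})\to(\Rea,|\cdot|)$; equivalently $g\colon(\MM,\rho)\to(\Rea,|\cdot|^{1/p})$ is Lipschitz. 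With $\Phi$ as in \eqref{eq:embedLp}, the map $f:=\Phi\circ g\colon\MM\to L_p(\Rea)$ is Lipschitz and satisfies $f(0)=0$, whence $a_i\chi_{(0,1]}=\sum_{j=1}^{N}a_j f(x_j)=0$. Therefore $a_i=0$, and since $i$ was arbitrary, $a_1=\dots=a_N=0$, proving the claim.

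I do not expect a genuine obstacle here: the entire analytic content, namely the McShane extension together with the embedding $\Phi$ into $L_p(\Rea)$, is already supplied by the proof of Proposition~\ref{linindep}, which in effect shows that distinct evaluations are linearly independent. The only points requiring (routine) care are the bookkeeping ones: that infiniteness of $\MM$ furnishes infinitely many distinct non-origin points, and that the infinite dimensionality of $\PP(\MM)$ passes to the completion $\F_p(\MM)$ precisely because $\Vert\cdot\Vert_{_{\F_{p}(\MM)}}$ is a norm rather than merely a seminorm, so that $\PP(\MM)$ sits inside $\F_p(\MM)$ as a dense linear subspace.
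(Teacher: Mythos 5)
Your proof is correct and is exactly the argument the paper intends: the paper states Lemma~\ref{lem:oodim} as an immediate consequence of the proof of Proposition~\ref{linindep}, namely that the McShane extension composed with the embedding $\Phi$ of \eqref{eq:embedLp} shows the evaluations $\delta(x)$, $x\in\MM\setminus\{0\}$, are linearly independent in $\F_p(\MM)$, which is precisely what you reproduce. Your additional bookkeeping remarks (infinitely many non-origin points, and the $p$-norm guaranteeing that $\PP(\MM)$ injects into the completion) are routine and consistent with the paper.
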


Similarly to Lipschitz free Banach spaces over metric spaces, the spaces $\F_{p}(\MM)$ for $0<p<1$ are uniquely characterised by the universal property included in the following result from  \cite{AlbiacKalton2009}.

\begin{Theorem}\label{part-case} Let $(\MM,\rho)$ be a pointed $p$-metric  space. Then: 
\begin{enumerate}
\item[(a)]   $\delta_{\MM}$ is an isometric embedding.
\item[(b)] The linear span of $\{\delta_{\MM}(x)\colon x\in \MM\}$ is dense in $\F_{p}(\MM)$.
\item[(c)]   $\F_{p}(\MM)$ is the unique (up to isometric isomorphism) $p$-Banach space  such that for every $p$-Banach space $X$ and every Lipschitz map $f\colon\MM\rightarrow X$ with $f(0) = 0$ there
exists a unique  linear map $T_{f}\colon\F_{p}(\MM)\rightarrow X$ with $T_{f}  \circ \delta_{\MM} = f$. Moreover $\Vert T_{f} \Vert =\LipOp(f)$. Pictorially,
\[
\xymatrix{\MM \ar[rr]^f \ar[dr]_{\delta_{\MM}} & & X\\
& \F_{p}(\MM) \ar[ur]_{T_{f}} &}
\]
\end{enumerate}
\end{Theorem}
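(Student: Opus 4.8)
The plan is to verify the three assertions in turn, treating (a) and (b) as warm-ups that fix the framework and reserving the real work for the universal property in (c). For (a) I would compute $\|\delta(x)-\delta(y)\|_{\F_p(\MM)}$ directly from \eqref{definition}. Since $\langle\delta(x)-\delta(y),f\rangle=f(x)-f(y)$, this quantity equals the supremum of $\|f(x)-f(y)\|_Y$ over all $p$-normed spaces $Y$ and all $1$-Lipschitz $f\colon\MM\to Y$ with $f(0)=0$. The upper bound $\|\delta(x)-\delta(y)\|_{\F_p(\MM)}\le\rho(x,y)$ is immediate because every such competitor satisfies $\|f(x)-f(y)\|_Y\le\rho(x,y)$. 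For the matching lower bound I would exhibit a single admissible competitor realizing $\rho(x,y)$, namely the isometric embedding $\Psi\colon\MM\to\ell_\infty(\MM;L_p(0,\infty))$ recalled above: it satisfies $\Psi(0)=0$, is $1$-Lipschitz, and has $\|\Psi(x)-\Psi(y)\|_Y=\rho(x,y)$, forcing equality. Assertion (b) requires no work: by construction $\F_p(\MM)$ is the completion of $(\PP(\MM),\|\cdot\|_{\F_p(\MM)})$ and $\PP(\MM)=\spn\{\delta(x)\colon x\in\MM\}$, so this span is dense by definition.

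The heart of the matter is (c). Given a $p$-Banach space $X$ and a Lipschitz $f\colon\MM\to X$ with $f(0)=0$, I would first define $T_f$ on the dense subspace $\PP(\MM)$ by $T_f(\sum_j a_j\delta(x_j))=\sum_j a_j f(x_j)$, the only formula compatible with $T_f\circ\delta_\MM=f$ and linearity. The key estimate, and the step where the definition of the free $p$-norm pays off, is
\[
\Big\|\sum_j a_j f(x_j)\Big\|_X\le \LipOp(f)\,\Big\|\sum_j a_j\delta(x_j)\Big\|_{\F_p(\MM)}.
\]
Indeed, assuming $\LipOp(f)>0$ (otherwise $f\equiv 0$ since $f(0)=0$, and $T_f=0$), the map $g=f/\LipOp(f)$ is a $1$-Lipschitz map with $g(0)=0$ into the $p$-normed space $X$ \emph{itself}, so $X$ is one of the admissible targets in the supremum defining $\|\cdot\|_{\F_p(\MM)}$; plugging $g$ into \eqref{definition} yields this inequality after rescaling. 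This single estimate does triple duty: it shows $T_f$ is well defined on $\PP(\MM)$ (if $\sum_j a_j\delta(x_j)=0$ then its free $p$-norm vanishes by Proposition~\ref{linindep}, hence so does $\sum_j a_j f(x_j)$), it shows $T_f$ is linear and bounded with $\|T_f\|\le\LipOp(f)$, and, since $X$ is complete and $\PP(\MM)$ is dense, it lets $T_f$ extend uniquely to a bounded linear map $\F_p(\MM)\to X$ with the same norm bound.

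To close (c) I would establish the reverse norm inequality and the uniqueness statements. For $x\ne y$, part (a) gives $\|f(x)-f(y)\|_X=\|T_f(\delta(x)-\delta(y))\|_X\le\|T_f\|\,\rho(x,y)$; dividing and taking the supremum yields $\LipOp(f)\le\|T_f\|$, so $\|T_f\|=\LipOp(f)$. Uniqueness of $T_f$ is forced because any admissible $T$ agrees with $f$ on $\{\delta(x)\colon x\in\MM\}$, hence on $\PP(\MM)$, hence on all of $\F_p(\MM)$ by continuity and density. Finally, uniqueness of $\F_p(\MM)$ up to isometric isomorphism is the standard universal-property argument: if $(Z,\iota)$ is another isometric pair enjoying the same factorization property, applying each space's property to the other's canonical embedding produces linear maps $T_\iota\colon\F_p(\MM)\to Z$ and $S\colon Z\to\F_p(\MM)$ whose composites fix the respective embeddings and hence, by the uniqueness just proved, equal the identities; the norm identities $\|T_\iota\|=\LipOp(\iota)=1$ and $\|S\|=\LipOp(\delta_\MM)=1$ make them mutually inverse isometries.

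The step I expect to be genuinely delicate rather than routine is the well-definedness and exact norm calibration of $T_f$ in (c): the whole argument hinges on recognizing that the space $X$ we map into is itself one of the competitors in the supremum \eqref{definition} defining the target norm, so the construction is in a sense self-referential. Everything else — the two inequalities in (a), the tautological density in (b), and the categorical uniqueness of the space — is comparatively mechanical once this self-referential estimate is in place.
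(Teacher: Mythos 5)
Your proof is correct, and since the paper itself gives no proof of this theorem (it is quoted directly from \cite{AlbiacKalton2009}), there is nothing to diverge from: your argument --- the isometry in (a) via the upper bound from $1$-Lipschitzness plus the competitor $\Psi\colon\MM\to\ell_\infty(\MM;L_p(0,\infty))$, the tautological density in (b), and in (c) the self-referential estimate obtained by recognizing $X$ itself (rescaled by $\LipOp(f)$) as an admissible competitor in \eqref{definition}, followed by the standard extension, norm, and categorical-uniqueness arguments --- is exactly the argument this construction is designed to support. One cosmetic remark: well-definedness of $T_f$ on $\PP(\MM)$ does not require Proposition~\ref{linindep}; it only needs that the zero element of $\PP(\MM)$ has zero seminorm (trivial) together with your key estimate applied to a representation of $0$.
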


\begin{Corollary}\label{cor:sep}The space $\F_{p}(\MM)$ is separable whenever $\MM$ is.
\end{Corollary}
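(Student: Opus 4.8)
The plan is to manufacture an explicit countable dense subset of $\F_{p}(\MM)$ out of a countable dense subset of $\MM$, combining the two density statements of Theorem~\ref{part-case} with the $p$-subadditivity (iv) of the quasi-norm. Let $D=\{x_{n}\colon n\in\Nat\}$ be a countable dense subset of the separable $p$-metric space $\MM$, and let $S$ be the set of all finite linear combinations $\sum_{j=1}^{m} q_{j}\,\delta_{\MM}(x_{j})$ with rational coefficients $q_{j}$ and points $x_{j}\in D$. Since $D$ is countable, $S$ is countable, so the whole task reduces to showing that $S$ is dense in $\F_{p}(\MM)$.

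First I would isolate the two approximation mechanisms. By Theorem~\ref{part-case}(b), the linear span of $\{\delta_{\MM}(x)\colon x\in\MM\}$ is dense in $\F_{p}(\MM)$, so any $\mu\in\F_{p}(\MM)$ can be approximated in $p$-norm by a finite real combination $\nu=\sum_{j=1}^{m}a_{j}\,\delta_{\MM}(y_{j})$ with $a_{j}\in\Rea$ and $y_{j}\in\MM$. By Theorem~\ref{part-case}(a), $\delta_{\MM}$ is an isometric embedding, hence $\Vert\delta_{\MM}(y)-\delta_{\MM}(x)\Vert=\rho(y,x)$ and $\Vert\delta_{\MM}(x)\Vert=\rho(0,x)$; thus replacing each $y_{j}$ by a nearby $x_{j}\in D$, and each $a_{j}$ by a nearby rational $q_{j}$, perturbs $\nu$ by controllable amounts.

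The main step is to assemble these perturbations through (iv). Given $\varepsilon>0$, fix $\nu$ as above with $\Vert\mu-\nu\Vert^{p}<\varepsilon^{p}/2$. For each $j$, using density of $D$ in $\MM$ and of $\mathbb{Q}$ in $\Rea$, pick $x_{j}\in D$ and then $q_{j}\in\mathbb{Q}$ so that $|a_{j}|^{p}\rho(y_{j},x_{j})^{p}<\varepsilon^{p}/(4m)$ and $|a_{j}-q_{j}|^{p}\rho(0,x_{j})^{p}<\varepsilon^{p}/(4m)$. Splitting the $j$-th term through (iv),
\[
\Vert a_{j}\,\delta_{\MM}(y_{j})-q_{j}\,\delta_{\MM}(x_{j})\Vert^{p}\le |a_{j}|^{p}\rho(y_{j},x_{j})^{p}+|a_{j}-q_{j}|^{p}\rho(0,x_{j})^{p}<\frac{\varepsilon^{p}}{2m},
\]
and summing over $j$ again by (iv) gives $\Vert\nu-\sigma\Vert^{p}<\varepsilon^{p}/2$ for $\sigma=\sum_{j=1}^{m}q_{j}\,\delta_{\MM}(x_{j})\in S$. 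A final application of (iv) yields $\Vert\mu-\sigma\Vert^{p}\le\Vert\mu-\nu\Vert^{p}+\Vert\nu-\sigma\Vert^{p}<\varepsilon^{p}$, so $S$ is dense and $\F_{p}(\MM)$ is separable.

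I do not anticipate any genuine obstacle: the argument is entirely routine. The only point deserving care is that the two successive approximations (first the points, then the coefficients) must be carried out one at a time, and the accumulated error estimated via the $p$-subadditivity inequality (iv) rather than an ordinary triangle inequality, since $\F_{p}(\MM)$ is merely $p$-normed; working with $p$-th powers throughout keeps the bookkeeping clean.
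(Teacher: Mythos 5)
Your proof is correct and follows essentially the same route as the paper, which simply notes that $\delta_\MM$ is an isometric embedding and that $\F_{p}(\MM)$ is the closed linear span of $\delta_\MM(\MM)$ (Theorem~\ref{part-case}(a) and (b)). You have merely written out the routine verification, via rational coefficients and $p$-subadditivity, that the closed linear span of a separable set in a $p$-Banach space is separable.
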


\begin{proof}Note that the map $\delta\colon \MM\to \F_{p}(\MM)$ is an isometric embedding and that $\F_{p}(\MM)$ is the closed linear span of $\delta(\MM)$.
\end{proof}

\begin{Remark}	\label{afterdef} If $p=1$ (so that $\rho$ is a metric) then it follows from the Hahn-Banach theorem that $\F_1(\MM)$ is the space denoted by $\F(\MM)$ in \cites{GodefroyKalton2003, Kalton2004} and  the norm of $\mu=\sum_{j=1}^{N}a_{j}x_{j}\in \PP(\MM)$ can be computed as
 \[
 \|\mu\|_{_{\F_{1}(\MM)}}=\sup \left|\sum_{j=1}^N a_jf(x_j)\right|,\]
the supremum being taken over all 1-Lipschitz maps $f\colon \MM\to \Rea$ with $f(0)=0$. 
Moreover, it is known (see, e.g., \cite{Weaver2018}) that $\F_{1}(\MM)^{\ast}=\Lip_{0}(\MM)$. We advance that the corresponding result also holds for $p<1$, i.e., $\F_{p}(\MM)^{\ast}=\Lip_0(\MM)$.  We will prove this in Corollary~\ref{dualAEp}.
\end{Remark}

Lipschitz free $p$-spaces provide a canonical linearization process of Lipschitz maps between $p$-metric spaces: if we identify (through the map $\delta_{\MM}$) a $p$-metric space $\MM$ with a subset of $\F_{p}(\MM)$, then any Lipschitz map from a $p$-metric space $\MM_{1}$ to a $p$-metric space $\MM_{2}$ which maps $0$ to $0$ extends to a continuous linear map from $\F_{p}(\MM_{1})$ to $\F_{p}(\MM_{2})$. That is:

\begin{Lemma}[cf.\ \cite{GodefroyKalton2003}*{Lemma 2.2}]\label{key} Let $\MM_{1}$ and $\MM_{2}$ be pointed $p$-metric spaces $(0<p\le 1)$ and suppose $f\colon \MM_{1}\rightarrow \MM_{2}$ is a Lipschitz map such that $f(0) = 0$. Then  there exists a unique
linear operator
$L_{f}\colon\F_{p}(\MM_{1})\rightarrow \F_{p}(\MM_{2})$ such that $ L_{f}\delta_{\MM_{1}} = \delta_{\MM_{2}} f$, i.e., the following diagram commutes
\[
\xymatrix{\MM_{1}\ar[r]^f \ar[d]_{\delta_{\MM_{1}}} & \MM_{2}\ar[d]^{\delta_{\MM_{2}}}\\
\F_{p}(\MM_{1}) \ar[r]^{L_{f}}&  \F_{p}(\MM_{2})}
\]
and $\Vert L_{f}\Vert =\LipOp(f)$.  In particular, if $f$ is a bi-Lipschitz bijection then $L_{f}$ is an isomorphism.
\end{Lemma}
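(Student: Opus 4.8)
The plan is to obtain $L_f$ directly from the universal property of $\F_p(\MM_1)$ recorded in Theorem~\ref{part-case}. The essential observation is that the composition $g := \delta_{\MM_2}\circ f$ is a Lipschitz map \emph{from the $p$-metric space $\MM_1$ into the $p$-Banach space $\F_p(\MM_2)$}, and that it sends the base point to $0$: indeed $g(0)=\delta_{\MM_2}(f(0))=\delta_{\MM_2}(0)=0$. Thus $g$ is a legitimate input for the factorization clause Theorem~\ref{part-case}(c), which supplies a unique linear operator $T_g\colon\F_p(\MM_1)\to\F_p(\MM_2)$ with $T_g\circ\delta_{\MM_1}=g=\delta_{\MM_2}\circ f$. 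I would simply set $L_f:=T_g$; the required commutativity $L_f\circ\delta_{\MM_1}=\delta_{\MM_2}\circ f$ is then built in. Uniqueness of $L_f$ among linear operators satisfying this identity follows from Theorem~\ref{part-case}(b): any two such operators agree on $\{\delta_{\MM_1}(x)\colon x\in\MM_1\}$, hence on its dense linear span, hence everywhere by continuity.

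The one point that demands genuine care is the norm equality $\Vert L_f\Vert=\LipOp(f)$, since a priori $\LipOp(g)$ might only be bounded by $\LipOp(f)$. Here I would exploit that $\delta_{\MM_2}$ is an \emph{isometric} embedding (Theorem~\ref{part-case}(a)), not merely $1$-Lipschitz. Writing $\rho_1$ and $\rho_2$ for the $p$-metrics on $\MM_1$ and $\MM_2$, for any $x,y\in\MM_1$ we have
\[
\Vert g(x)-g(y)\Vert_{\F_p(\MM_2)}=\Vert \delta_{\MM_2}(f(x))-\delta_{\MM_2}(f(y))\Vert_{\F_p(\MM_2)}=\rho_2(f(x),f(y)).
\]
Dividing by $\rho_1(x,y)$ and taking the supremum over $x\ne y$ yields $\LipOp(g)=\LipOp(f)$ exactly. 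Since Theorem~\ref{part-case}(c) guarantees $\Vert T_g\Vert=\LipOp(g)$, this gives $\Vert L_f\Vert=\LipOp(f)$.

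For the final assertion, suppose $f$ is a bi-Lipschitz bijection. Then $f^{-1}\colon\MM_2\to\MM_1$ is Lipschitz, and since $f(0)=0$ forces $f^{-1}(0)=0$, the first part applies to produce $L_{f^{-1}}\colon\F_p(\MM_2)\to\F_p(\MM_1)$. I would then note that $L_{f^{-1}}\circ L_f$ is a linear operator on $\F_p(\MM_1)$ satisfying
\[
(L_{f^{-1}}\circ L_f)\circ\delta_{\MM_1}=L_{f^{-1}}\circ\delta_{\MM_2}\circ f=\delta_{\MM_1}\circ f^{-1}\circ f=\delta_{\MM_1}.
\]
The identity operator $\Id_{\F_p(\MM_1)}$ satisfies the same relation, so the uniqueness just established forces $L_{f^{-1}}\circ L_f=\Id_{\F_p(\MM_1)}$; by symmetry $L_f\circ L_{f^{-1}}=\Id_{\F_p(\MM_2)}$. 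Hence $L_f$ is invertible with inverse $L_{f^{-1}}$, i.e.\ an isomorphism.

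I do not anticipate a substantive obstacle, as the statement is essentially a transcription of the universal property through the linearization $\delta_{\MM_2}\circ f$; the only subtle step is the norm computation, where it is crucial that $\delta_{\MM_2}$ be isometric (rather than merely $1$-Lipschitz) in order to get the exact equality $\LipOp(g)=\LipOp(f)$ and not just an inequality.
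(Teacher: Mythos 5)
Your proof is correct and follows exactly the same route as the paper's: the paper likewise sets $g:=\delta_{\MM_2}\circ f$, observes that $\LipOp(g)=\LipOp(f)$ because $\delta_{\MM_2}$ is an isometric embedding, and invokes Theorem~\ref{part-case}. You have merely written out in full the details (uniqueness via density of $\spn\{\delta_{\MM_1}(x)\colon x\in\MM_1\}$, and the inverse argument for the bi-Lipschitz case) that the paper's two-line proof leaves implicit.
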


\begin{proof} Since $\delta_{\mathcal M_2}$ is an isometric embedding, the map $g:=\delta_{\mathcal M_2}\circ f$ is  Lipschitz with $g(0)=0$ and Lip$(g)=$Lip$(f)$. Now the result follows from Theorem~\ref{part-case}.
 \end{proof}

\subsection{Molecules and atomic decompositions}\label{Sect5}

Given a set $\MM$ and $x\in \MM$, let  $\chi_{x}$ denote the indicator function of the singleton set $\{x\}$. Now, for  $x$ and $y\in {\MM}$
we put 
\[
m_{x,y}:=\chi_x -\chi_y.
\]
Let  $(\MM,\rho)$ be a $p$-metric space for some $0<p\le 1$. A \textit{molecule} of $\MM$ is a function $m\colon \MM\to \Rea$ that is supported on a finite subset of $\MM$ and that satisfies $\sum_{x\in \MM} m(x)=0$. The vector space  of all molecules of a metric space $\MM$ will be denoted by $\Mol({\MM})$. 
  
A simple induction argument shows that every molecule has at least one expression as a linear combination of molecules of the form $m_{x,y}$, so that  $\Mol({\MM})$ coincides with  the linear span of the family of molecules 
\[
\AAA'({\MM})=\left\{\frac{m_{x,y}}{\rho(x,y)} \colon x,y\in {\MM}, x\not=y\right\}\subseteq \Rea^{\MM}.
\]
 
\begin{Definition}We define the \textit{Arens-Eells $p$-space over $\MM$}, denoted $\AEM_p(\MM)$, as the $p$-Banach space constructed from the set $\AAA'({\MM})$ using the $p$-convexification method (see Definition~\ref{complmeth}).
\end{Definition}

This way,  if we give $\Mol({\MM})$ the $p$-seminorm
\begin{equation}\label{formulapsemi}
\Vert m\Vert_{\AEM_{p}}=\inf\left\{\left(\sum_{i=1}^{N}|a_{i}|^{p} \right)^{1/p} :\, m=\sum_{i=1}^{N}a_{i}\frac{m_{x_{i},y_{i}}}{\rho(x_{i},y_{i})},\, N\in \Nat\right\},
\end{equation}
we have that $\AEM_{p}(\MM)$ is the completion of $\Mol({\MM})$ (a priori, modulo the set of molecules with zero $p$-seminorm) with respect to $\Vert\cdot\Vert_{\AEM_{p}}$. However, as we will see below, formula \eqref{formulapsemi} defines in fact a $p$-norm on  $\Mol({\MM})$.

The following result establishes that the Arens-Eells $p$-space over $\MM$ can be identified with  the Lipschitz free $p$-space over $\MM$.  
 \begin{Theorem}\label{AtomicAEp} Let $0<p\le 1$ and $(\MM,\rho)$ be a pointed $p$-metric space. Then $\F_{p}(\MM)$ and $\AEM_p({\MM})$  are isometrically isomorphic. In fact, there is a linear onto isometry $T\colon \F_{p}(\MM)\to\AEM_p({\MM})$
 such that  $T(\delta(x)) = \chi_x -\chi_0$ $=m_{x,0}$ for all $x\in \MM$.
 \end{Theorem}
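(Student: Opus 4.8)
The plan is to construct the isometry on the dense generating subspaces and then pass to completions. First I would introduce the natural linear map $U\colon\PP(\MM)\to\Mol(\MM)$ determined on the spanning evaluations by $U(\delta(x))=\chi_x-\chi_0=m_{x,0}$; this is consistent with $\delta(0)=0$ since $\chi_0-\chi_0=0$. Because the functionals $\{\delta(x):x\neq 0\}$ are linearly independent (test against indicators $\chi_{x_i}$, which vanish at $0$), $\PP(\MM)$ is free on $\MM\setminus\{0\}$, so $U$ is well defined and linear; and since every molecule satisfies $\sum_x m(x)=0$, the identity $m=\sum_{x\neq 0}m(x)(\chi_x-\chi_0)$ shows $U$ is onto, hence a linear bijection. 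Under $U$ the element $(\delta(x)-\delta(y))/\rho(x,y)$ corresponds precisely to the generator $m_{x,y}/\rho(x,y)\in\AAA'(\MM)$. The whole theorem then reduces to the claim that $U$ preserves the respective $p$-(semi)norms, i.e. $\|\mu\|_{\F_p(\MM)}=\|U\mu\|_{\AEM_p}$ for all $\mu\in\PP(\MM)$; equivalently, this says that $\{(\delta(x)-\delta(y))/\rho(x,y)\}$ and $\AAA'(\MM)$ are matching isometrically $p$-norming sets, so that Lemma~\ref{lemma:transformation} delivers the onto isometry $T$ with $T(\delta(x))=m_{x,0}$.

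For the inequality $\|\mu\|_{\F_p(\MM)}\le\|U\mu\|_{\AEM_p}$ I would fix any representation $U\mu=\sum_i c_i\,m_{x_i,y_i}/\rho(x_i,y_i)$, so that $\mu=\sum_i c_i\,(\delta(x_i)-\delta(y_i))/\rho(x_i,y_i)$. Given any $p$-normed space $Y$ and any $1$-Lipschitz $f\colon\MM\to Y$ with $f(0)=0$, the vector $\sum_j a_j f(x_j)$ occurring in \eqref{definition} is the image of $\mu$ under the linear map $\delta(x)\mapsto f(x)$, hence equals $\sum_i c_i\,(f(x_i)-f(y_i))/\rho(x_i,y_i)$; each summand has $Y$-norm at most $1$ because $f$ is $1$-Lipschitz, so $p$-subadditivity gives $\|\sum_j a_j f(x_j)\|_Y\le(\sum_i|c_i|^p)^{1/p}$. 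Taking the supremum over all admissible $(Y,f)$ and then the infimum over all representations yields the bound.

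The reverse inequality $\|U\mu\|_{\AEM_p}\le\|\mu\|_{\F_p(\MM)}$ is the heart of the matter, and I expect it to be the main obstacle; the key idea is to feed $\AEM_p(\MM)$ itself into the supremum defining $\|\cdot\|_{\F_p(\MM)}$. I would consider $f_0\colon\MM\to\AEM_p(\MM)$, $f_0(x)=m_{x,0}=\chi_x-\chi_0$, which satisfies $f_0(0)=0$. Since $f_0(x)-f_0(y)=m_{x,y}=\rho(x,y)\cdot(m_{x,y}/\rho(x,y))$ and the generator $m_{x,y}/\rho(x,y)$ lies in the unit ball of $\AEM_p(\MM)$, the map $f_0$ is $1$-Lipschitz. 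As $\AEM_p(\MM)$ is a $p$-Banach space, it is an admissible test space in \eqref{definition}, and the corresponding vector $\sum_j a_j f_0(x_j)$ is exactly $U\mu$; therefore $\|\mu\|_{\F_p(\MM)}\ge\|U\mu\|_{\AEM_p}$. Combining the two inequalities shows $U$ is isometric. Since $\|\cdot\|_{\F_p(\MM)}$ is a genuine $p$-norm by Proposition~\ref{linindep}, this forces $\|\cdot\|_{\AEM_p}$ to be a $p$-norm as well (so $\Mol(\MM)$ embeds isometrically in its completion), and the unique extension of the surjective isometry $U$ to the completions gives the onto isometry $T\colon\F_p(\MM)\to\AEM_p(\MM)$ with $T(\delta(x))=m_{x,0}$. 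The only delicate point is the $1$-Lipschitz estimate for $f_0$, which rests entirely on the fact that each $m_{x,y}/\rho(x,y)$ is one of the defining generators of $\AEM_p(\MM)$.
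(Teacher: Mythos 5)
Your proof is correct and takes essentially the same route as the paper's: the paper obtains the two mutually inverse norm-one operators by applying the universal property of $\F_p(\MM)$ to the $1$-Lipschitz map $x\mapsto m_{x,0}$ (exactly your reverse inequality, with $\AEM_p(\MM)$ as test space) and by sending the generators $m_{x,y}/\rho(x,y)$ to the norm-at-most-one elements $(\delta(x)-\delta(y))/\rho(x,y)$ and extending by density (exactly your forward inequality). The only difference is presentational --- you verify the two inequalities by hand at the level of the seminorms on $\PP(\MM)$ and $\Mol(\MM)$, whereas the paper packages the same estimates as operators $T$ and $S$ that are checked to be mutually inverse on dense subspaces.
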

 \begin{proof}Consider the map $f\colon {\MM}\to \AEM_p({\MM})$ given by $f(x)=m_{x,0}$ for  $x\in{\MM}$. Clearly,  $f(0)=0$ and $f(x)-f(y)=m_{x,y}$ for all $x$, $y\in \MM$. Since $f$ is $1$-Lipschitz, Theorem~\ref{part-case} yields a norm-one linear map $T_{f}\colon \F_p({\MM})\to\AEM_p({\MM})$ such that $T_{f}(\delta(x))=m_{x,0}$.
 
 Since $(\chi_x)_{x\in \MM}$ is a linearly independent family in $\Rea^{\MM}$, there is a linear map from $\spn\{\chi_x \colon x\in \MM\}$ into ${\PP}({\MM})$ that takes $\chi_x$ to $\delta(x)$ for every $x\in \MM$. Let $S_1$ be its restriction to $\Mol({\MM})$.  For $x,y\in {\MM}$ with $x\not=y$ we have
\[
S_1\left(\frac{m_{x,y}}{\rho(x,y)}\right)=\frac{\delta(x)-\delta(y)}{\rho(x,y)},
\]
and
\[
\left\Vert \frac{\delta(x)-\delta(y)}{\rho(x,y)}\right\Vert_{{\F_{p}(\MM)}}\le 1,
\]
so by density $S_1$ extends to a norm-one operator $S$ from  $\AEM_p({\MM})$ into $\F_p({\MM})$. Since $T(S(m))=m$ for every molecule $m$,
and   $S(T(\mu))=\mu$ for every $\mu \in {\PP}({\MM})$, by continuity and density it follows that $T\circ S=\Id_{\AEM_p({\MM})}$ and $S\circ T=\Id_{\F_p({\MM})}$.
\end{proof}
 
The following two results are re-formulations of Theorem~\ref{AtomicAEp}. While the expression of the norm in \eqref{definition} relies on extraneous ingredients, Corollary~\ref{alternative-norm}  provides an intrinsic formula for the $p$-norm on $\F_{p}(\MM)$, i.e., an expression that relies only on the quasimetric on the  space $\MM$.
 
\begin{Corollary}\label{pNormingset} Let $(\MM,\rho)$ be a pointed $p$-metric space, $0<p\le 1$.  The subset of $\PP(\MM)$ given by
\[
\AAA(\MM)=\left\{ \frac{\delta(y)-\delta(x)}{\rho(x,y)} \colon x,y\in \MM, \, x\not=y\right\}
\]
is isometrically $p$-norming for  $\F_{p}(\MM)$.
\end{Corollary}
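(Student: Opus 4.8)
The plan is to transport the statement across the isometry $T$ of Theorem~\ref{AtomicAEp}, exploiting the fact that the generating set of an Arens-Eells $p$-space is, by construction, isometrically $p$-norming for it. First I would record this last observation: by Definition~\ref{complmeth} the space $\AEM_p(\MM)$ is exactly the space $X_{p,\AAA'(\MM)}$ built from $\AAA'(\MM)$ by the $p$-convexification method, and formula~\eqref{Minko} says precisely that its quasi-norm coincides with $\Vert\cdot\Vert_{p,\AAA'(\MM)}$. Hence $\overline{\cop(\AAA'(\MM))}=B_{\AEM_p(\MM)}$, i.e.\ $\AAA'(\MM)$ is isometrically $p$-norming for $\AEM_p(\MM)$.

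Second, I would identify the image of $\AAA(\MM)$ under $T$. Since $T$ is linear with $T(\delta(x))=m_{x,0}$ and $\rho$ is symmetric,
\[
T\!\left(\frac{\delta(y)-\delta(x)}{\rho(x,y)}\right)=\frac{m_{y,0}-m_{x,0}}{\rho(x,y)}=\frac{\chi_y-\chi_x}{\rho(y,x)}=\frac{m_{y,x}}{\rho(y,x)}\in\AAA'(\MM).
\]
As $(x,y)$ runs over all ordered pairs of distinct points of $\MM$, so does $(y,x)$, and therefore $T(\AAA(\MM))=\AAA'(\MM)$.

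Third, I would push the unit ball back through $T$. Because $T$ is a linear onto isometry, it is a homeomorphism that carries $p$-convex combinations to $p$-convex combinations, so it commutes with the operation $\overline{\cop(\cdot)}$; combined with the two previous steps this gives
\[
T\big(\overline{\cop(\AAA(\MM))}\big)=\overline{\cop(T(\AAA(\MM)))}=\overline{\cop(\AAA'(\MM))}=B_{\AEM_p(\MM)}.
\]
Applying the (isometric) inverse and using $T^{-1}(B_{\AEM_p(\MM)})=B_{\F_p(\MM)}$ yields $\overline{\cop(\AAA(\MM))}=B_{\F_p(\MM)}$, which is exactly the assertion that $\AAA(\MM)$ is isometrically $p$-norming. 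Equivalently, transporting formula~\eqref{Minko} through $T^{-1}$ produces the intrinsic expression $\Vert\mu\Vert_{_{\F_p(\MM)}}=\Vert\mu\Vert_{p,\AAA(\MM)}$ for $\mu\in\PP(\MM)$ advertised just before the statement.

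I do not expect a genuine obstacle: the corollary is essentially a reformulation of Theorem~\ref{AtomicAEp}. The only points needing care are the sign and symmetry bookkeeping in the second step, so that $T(\AAA(\MM))$ lands exactly on $\AAA'(\MM)$ rather than on its negative or on a reparametrization of it, and the routine verification that an onto isometry commutes with closed $p$-convex hulls. One could instead appeal to the isometric case of Lemma~\ref{lemma:transformation}, but since $T$ is already known to be an onto isometry the direct push-forward of the unit ball is the shortest route.
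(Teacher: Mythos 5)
Your proposal is correct and follows essentially the same route as the paper, which presents Corollary~\ref{pNormingset} precisely as a re-formulation of Theorem~\ref{AtomicAEp}: the paper's implicit argument is exactly your transport of the generating set $\AAA'(\MM)$ (isometrically $p$-norming by the $p$-convexification construction) through the onto isometry $T$. Your sign/symmetry bookkeeping showing $T(\AAA(\MM))=\AAA'(\MM)$ and the observation that a linear onto isometry commutes with closed $p$-convex hulls are exactly the details the paper leaves to the reader.
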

 
\begin{Corollary}\label{alternative-norm} Let $(\MM,\rho)$ be a pointed $p$-metric space, $0<p\le 1$. 
For  $\mu \in  \F_{p}(\MM)$ we have
\[
\Vert  \mu \Vert_{_{\F_{p}(\MM)}}
= \inf\left\{
\left(\sum_{k=1}^{\infty} |a_{k}|^{p}\right)^{1/p} \colon \mu = \sum_{k=1}^{\infty}a_{k}\frac{\delta(x_{k}) - \delta(y_{k})}{\rho(x_{k}, y_{k})}
\right\}.
\]
\end{Corollary}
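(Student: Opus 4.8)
The plan is to read the identity off directly from Corollary~\ref{pNormingset}, once one observes that the quantity on the right-hand side is nothing but the $p$-convexification quasi-norm $\Vert\cdot\Vert_{p,\AAA(\MM)}$ attached to the set $\AAA(\MM)$ through formula \eqref{Minko}.

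First I would unpack what it means for $\AAA(\MM)$ to be \emph{isometrically} $p$-norming. By the definition of a $p$-norming set in the case of constants $C=D=1$, this says precisely that $\overline{\cop(\AAA(\MM))}=B_{\F_p(\MM)}$. Since $\Vert\cdot\Vert_{p,\AAA(\MM)}$ is by construction the Minkowski functional of the closed absolutely $p$-convex hull $\overline{\cop(\AAA(\MM))}$, while $\Vert\cdot\Vert_{\F_p(\MM)}$ is the Minkowski functional of $B_{\F_p(\MM)}$, the coincidence of these two unit balls forces $\Vert\mu\Vert_{\F_p(\MM)}=\Vert\mu\Vert_{p,\AAA(\MM)}$ for every $\mu\in\F_p(\MM)$. (Equivalently, one may simply invoke the observation that a set is isometrically $p$-norming exactly when $\Vert\cdot\Vert_{p,\AAA(\MM)}$ agrees with the ambient quasi-norm.)

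Next I would substitute the explicit expression \eqref{Minko} for $\Vert\mu\Vert_{p,\AAA(\MM)}$, which writes it as the infimum of $(\sum_k|a_k|^p)^{1/p}$ taken over all representations $\mu=\sum_k a_k z_k$ with $z_k\in\AAA(\MM)$. The only point needing a word is the matching of signs: the set $\AAA(\MM)$ is symmetric, because $\rho$ is symmetric and $\frac{\delta(y)-\delta(x)}{\rho(x,y)}=-\frac{\delta(x)-\delta(y)}{\rho(x,y)}$, so letting the scalars $a_k$ absorb the signs shows that ranging over atoms of the form $\frac{\delta(x_k)-\delta(y_k)}{\rho(x_k,y_k)}$, as written in the statement, produces exactly the same family of representations and hence the same infimum. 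This reproduces the displayed formula verbatim.

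There is essentially no analytic obstacle here, since the entire content is carried by Corollary~\ref{pNormingset}. The only care needed is bookkeeping: one must check that formula \eqref{Minko}, stated a priori for elements of the abstract $p$-convexification space built from $\AAA(\MM)$, transfers to an arbitrary $\mu\in\F_p(\MM)$. This is legitimate precisely because the isometric $p$-norming property identifies that abstract space isometrically with $\F_p(\MM)$, so no genuinely new estimate is required.
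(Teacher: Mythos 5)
Your proof is correct and takes essentially the same route as the paper: there Corollary~\ref{alternative-norm} is presented as a direct reformulation of Theorem~\ref{AtomicAEp}, i.e., of the isometric identification of $\F_p(\MM)$ with the $p$-convexification space built from the normalized elementary molecules, whose quasi-norm is precisely the infimum expression \eqref{Minko}. Your derivation via Corollary~\ref{pNormingset} (isometrically $p$-norming means $\overline{\cop(\AAA(\MM))}=B_{\F_p(\MM)}$, so the Minkowski functionals coincide) is the same identification phrased in the $p$-norming language, with the sign/relabelling remark handled correctly.
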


\subsection{Applications: early examples and results}

Next we  use Corollary~\ref{pNormingset} to identify the first examples of Lipschitz-free $p$-spaces over quasimetric spaces for $0<p<1$.   The informed reader will see a relation between  the map considered in \eqref{lasthopefully} and the (quite forgotten) theory of flat spaces, developed by J.J. Sch\"affer and others in the 1970' (see e.g. \cite{JJS1967, HK1970}).

\begin{Theorem}\label{LpasFreespace} Let $0<p\le 1$. Let $I$ be an interval of $\mathbb R$ equipped with the $p$-metric $\rho(x,y)=|x-y|^{1/p}$ for $x,y\in I$. Then 
$\F_p(I) \approx L_p(I)$
 isometrically. To be precise, if $a$ is the base point of $I$, the map 
\begin{equation}\label{lasthopefully}
\F_{p}(I) \to L_p(I), \quad 
\delta_{I}(x)\mapsto \chi_{(a,x]}
\end{equation}
extends to a linear isometry.
\end{Theorem}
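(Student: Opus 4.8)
The plan is to present both spaces as arising from explicit isometrically $p$-norming sets and then to invoke Lemma~\ref{lemma:transformation}. By Corollary~\ref{pNormingset}, the set
\[
Z_1=\AAA(I)=\left\{\frac{\delta(y)-\delta(x)}{\rho(x,y)}\colon x,y\in I,\ x\neq y\right\}
\]
is isometrically $p$-norming for $\F_p(I)$. On the target side I would take
\[
Z_2=\left\{\frac{\chi_{(x,y]}}{|y-x|^{1/p}}\colon x,y\in I,\ x\neq y\right\}\subseteq L_p(I),
\]
the normalized indicators of the half-open subintervals of $I$ (with the convention $\chi_{(a,b]}=-\chi_{(b,a]}$ for $a>b$); since $\Vert \chi_{(x,y]}\Vert_{p}^{p}=|y-x|=\rho^{\,p}(x,y)$, each element of $Z_2$ has $L_p$-norm $1$). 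The linear map $T$ determined by $\delta_I(x)\mapsto\chi_{(a,x]}$ sends $\delta(y)-\delta(x)$ to $\chi_{(a,y]}-\chi_{(a,x]}=\chi_{(x,y]}$, hence carries $Z_1$ bijectively onto $Z_2$; it is one-to-one on $\PP(I)=\spn(Z_1)$ because the functions $\{\chi_{(a,x]}\colon x\in I\}$ are linearly independent in $L_p(I)$.

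The only real content is to verify that $Z_2$ is isometrically $p$-norming for $L_p(I)$, i.e.\ that $\overline{\cop(Z_2)}=B_{L_p(I)}$. The inclusion $\overline{\cop(Z_2)}\subseteq B_{L_p(I)}$ is immediate from $p$-subadditivity of $\Vert\cdot\Vert_{p}$: any $\sum_j a_j z_j$ with $z_j\in Z_2$ and $\sum_j|a_j|^p\le1$ satisfies $\Vert\sum_j a_j z_j\Vert_{p}^{p}\le\sum_j|a_j|^p\le1$. For the reverse inclusion I would first treat a step function $g=\sum_{i=1}^n\alpha_i\chi_{(t_{i-1},t_i]}$ subordinate to a partition $t_0<t_1<\cdots<t_n$ of a subinterval of $I$. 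The identity
\[
g=\sum_{i=1}^n\alpha_i(t_i-t_{i-1})^{1/p}\,\frac{\chi_{(t_{i-1},t_i]}}{(t_i-t_{i-1})^{1/p}},\qquad \sum_{i=1}^n\bigl|\alpha_i(t_i-t_{i-1})^{1/p}\bigr|^p=\sum_{i=1}^n|\alpha_i|^p(t_i-t_{i-1})=\Vert g\Vert_{p}^{p},
\]
exhibits $g$ as an absolutely $p$-convex combination of elements of $Z_2$ whose coefficients have $\ell_p$-weight exactly $\Vert g\Vert_{p}^{p}$; in particular $g\in\cop(Z_2)$ whenever $\Vert g\Vert_{p}\le1$. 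Since such step functions are dense in $L_p(I)$, any $g\in B_{L_p(I)}$ is an $L_p$-limit of step functions $g_n$, and replacing each $g_n$ by $g_n/\max\{1,\Vert g_n\Vert_{p}\}$ keeps the limit equal to $g$ while placing every term in $\cop(Z_2)$; hence $g\in\overline{\cop(Z_2)}$.

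With both norming sets in hand, Lemma~\ref{lemma:transformation} shows that $T$ extends to an onto isometry $\widetilde T\colon\F_p(I)\to L_p(I)$ carrying $\delta_I(x)$ to $\chi_{(a,x]}$, which is exactly the assertion. I expect no serious obstacle here: the whole argument hinges on the elementary normalization $\Vert\chi_{(x,y]}\Vert_{p}^{p}=\rho^{\,p}(x,y)$, which makes interval indicators precisely the right atoms, together with the density of step functions. The only points demanding care are the bookkeeping with the signed-indicator convention (so that $T(Z_1)=Z_2$ holds on the nose) and the harmless rescaling that keeps the approximating step functions inside the unit ball.
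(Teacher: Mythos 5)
Your proposal is correct and follows essentially the same route as the paper: both identify $\AAA(I)$ as the isometrically $p$-norming set for $\F_p(I)$ via Corollary~\ref{pNormingset}, map it by $\delta(x)\mapsto\chi_{(a,x]}$ onto the normalized interval indicators, verify the latter are isometrically $p$-norming for $L_p(I)$ through the same step-function decomposition $b_j=(t_j-t_{j-1})^{1/p}\alpha_j$ with $\sum_j|b_j|^p=\Vert g\Vert_p^p$, and conclude with Lemma~\ref{lemma:transformation}. The extra details you supply (injectivity of $T$ on $\PP(I)$, the sign convention making $T(Z_1)=Z_2$ exact, and the rescaling of the approximating step functions) are points the paper leaves implicit, and they are handled correctly.
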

 
\begin{proof}Choose an arbitrary $a\in I$ as the base point of  $(I,\vert \cdot\vert^{1/p})$. Set 
 \[
 \AAA_{p,I}=\left\{ \frac{\chi_{(x,y]}}{|y-x|^{1/p}} \colon x,y\in I, \, x<y\right\},
 \]
 and let $T\colon \PP(I) \to \Rea^I$ be the  linear map determined  by $
 \delta(x) \mapsto \chi_{(a,x]}$ for
 $x\in I\setminus \{a\}.$
Using the notation of Corollary~\ref{pNormingset}, we put
\[
\AAA(I)=\left\{\frac{\delta(x)-\delta(y)}{|x-y|^{1/p}} : x,y\in I, x\not=y \right\}.
\]
Since $T(\AAA(I))=\{ \pm f \colon f\in  \AAA_{p,I}\}$, taking into account Corollary~\ref{pNormingset}  and Lemma~\ref{lemma:transformation}, it suffices to show that  $\AAA_{p,I}$ is an isometric $p$-norming set for $L_p(I)$. To that end we need to verify that $f\in\overline{\cop}(\AAA_{p,I})$ for every $f\in L_p(I)$ with $\|f\|_p \leq 1$. By density it is sufficient to prove it for step functions. Let $f\colon I\to \Rea$ be a step function, i.e.,  
\[
f=\sum_{j=1}^N a_j \chi_{(x_{j-1},x_j]},
\]
 for some $x_0<x_1<\cdots<x_{j-1}<x_j<\cdots<x_N$ in $I$ and some scalars $(a_j)_{j=1}^N$.
Then, if $b_j= (x_j-x_{j-1})^{1/p} a_j$, we have $f=\sum_{j=1}^N b_j \, f_j$ with $f_j\in  \AAA_{p,I}$  and 
\[
\sum_{n=1}^N |b_j|^p=\sum_{j=1}^n |a_j|^p (x_j-x_{j-1})=\Vert f\Vert_p^p. \qedhere
\] 
\end{proof}

Recall that a quasimetric space $\MM$ is  \textit{uniformly separated} if
\[
\inf\{ \rho(x,y) \colon x,y \in \MM, x\not= y\}>0 .
\]
Let us note that each bounded and uniformly separated quasimetric space is Lipschitz isomorphic to the \textit{$\{0,1\}$-metric space}, i.e., the  metric space whose distance attains only the values $0$ and $1$.
 
\begin{Theorem}\label{discreteA} Let  $\MM$ be a bounded and uniformly separated quasimetric space. For $0<p\le 1$ we have
$\F_p(\MM)\approx \ell_p(\MM\setminus\{0\}).$
To be precise, the map 
\[
\F_{p}(\MM)\to \ell_p (\MM\setminus\{0\}), \quad
\delta_{\MM}(x) \mapsto \ee_x
\]
where $\ee_x$ denotes the indicator function of the singleton $\{x\}$, extends to a linear  isomorphism.

\end{Theorem}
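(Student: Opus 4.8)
The plan is to reduce to the $\{0,1\}$-metric space and then identify the canonical unit vectors of $\ell_p$ with the images of the base-point atoms. First I would record that boundedness and uniform separation furnish constants $0<a\le b<\infty$ with $a\le\rho(x,y)\le b$ for all $x\ne y$; consequently the identity map is a bi-Lipschitz bijection from $(\MM,\rho)$ onto $(\MM,\rho_{01})$, where $\rho_{01}$ is the $\{0,1\}$-metric on the same underlying pointed set (this is exactly the observation made just before the theorem). By Lemma~\ref{key} this bijection induces an isomorphism $L\colon\F_p(\MM,\rho)\to\F_p(\MM,\rho_{01})$ with $L(\delta(x))=\delta(x)$ for every $x$. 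Since the target map $\delta(x)\mapsto\ee_x$ does not depend on which of the two metrics is placed on $\MM$, it suffices to establish the theorem for $(\MM,\rho_{01})$ and then precompose with $L$.

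Working now with $(\MM,\rho_{01})$, I would invoke Corollary~\ref{pNormingset}: the set $\AAA(\MM)=\{\delta(y)-\delta(x)\colon x\ne y\}$ (all normalising denominators equal $1$) is isometrically $p$-norming for $\F_p(\MM,\rho_{01})$. Consider the subset $Z_0=\{\delta(x)\colon x\in\MM\setminus\{0\}\}$, which is contained in $\AAA(\MM)$ because $\delta(x)=\delta(x)-\delta(0)$. Every element $\delta(y)-\delta(x)$ of $\AAA(\MM)$ is a combination of at most two members of $Z_0$ with coefficients in $\{\pm1\}$ (just one member when $x=0$ or $y=0$), so the $\ell_p$-norm of the coefficient sequence never exceeds $2^{1/p}$. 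Lemma~\ref{ReductionLemma} then yields that $Z_0$ is $p$-norming for $\F_p(\MM,\rho_{01})$ with constants $1$ and $2^{1/p}$.

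Next, in $\ell_p(\MM\setminus\{0\})$ the canonical unit vectors $Z_2=\{\ee_x\colon x\ne0\}$ satisfy $\overline{\cop}(Z_2)=B_{\ell_p}$, so $Z_2$ is isometrically $p$-norming. The linear map $T\colon\spn(Z_0)=\PP(\MM)\to\ell_p(\MM\setminus\{0\})$ determined by $\delta(x)\mapsto\ee_x$ is one-to-one and satisfies $T(Z_0)=Z_2$; hence by Lemma~\ref{lemma:transformation} it extends to an onto isomorphism $\widetilde T\colon\F_p(\MM,\rho_{01})\to\ell_p(\MM\setminus\{0\})$. Composing, $\widetilde T\circ L$ is the desired isomorphism sending $\delta_\MM(x)$ to $\ee_x$.

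The individual verifications are routine; the single point that genuinely requires care — and the reason for passing to $\rho_{01}$ at the outset — is that in the original metric the atoms $\delta(x)$ are rescalings $\rho(x,0)\,(\delta(x)-\delta(0))/\rho(x,0)$ of members of $\AAA(\MM)$ rather than members themselves. Keeping the original metric would force the unnormalised identification $\delta(x)\mapsto\ee_x$ to carry a $p$-norming set onto a rescaled (still $p$-norming, but no longer the unit-vector) set, obscuring the clean statement. Reducing to the $\{0,1\}$-metric makes every denominator equal to $1$, lets $Z_0$ sit inside $\AAA(\MM)$, and thereby lets Lemma~\ref{ReductionLemma} and Lemma~\ref{lemma:transformation} close the argument directly.
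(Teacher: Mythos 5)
Your proposal is correct and follows essentially the same route as the paper's proof: reduce to the $\{0,1\}$-metric space via the bi-Lipschitz identity map and Lemma~\ref{key}, use Corollary~\ref{pNormingset} together with Lemma~\ref{ReductionLemma} to show that $\{\delta(x)\colon x\in\MM\setminus\{0\}\}$ is $p$-norming with constants $1$ and $2^{1/p}$, and conclude with Lemma~\ref{lemma:transformation} applied to $\delta(x)\mapsto\ee_x$. The only difference is cosmetic: you spell out the reduction and the containment $Z_0\subseteq\AAA(\MM)$ explicitly, whereas the paper compresses these into a ``without loss of generality'' and a two-term decomposition of each normalized molecule.
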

\begin{proof} Without loss of generality we can assume that $(\MM,\rho)$ is the $\{0,1\}$-metric space.
If  $x$ and $y$ are two different points in ${\MM}\setminus\{0\}$ we can write
\[
\frac{\delta(y)-\delta(x)}{\rho(x,y)}=a_{xy}\frac{\delta(x)}{\rho(0,x)} +b_{x,y} \frac{\delta(y)}{\rho(0,y)},
 \]
where $a_{x,y}=  -1$ and  $b_{x,y} =1$.
Since $|a_{x,y}|^p+|b_{x,y}|^p=2$, by Corollary~\ref{pNormingset} and Lemma~\ref{ReductionLemma}, the set
\[
\AAA=\left\{ \frac{ \delta(x)}{\rho(0,x)} \colon x\in \MM\setminus\{0\}\right\}=\left\{ \delta(x) \colon x\in \MM\setminus\{0\}\right\},
\]
 is  $p$-norming for $\F_p(\MM)$ with constants $1$ and $2^{1/p}$.
Consider the linear map
 $T\colon \PP(\MM) \to \Rea^{{\MM}\setminus\{0\}}$ given by $
 \delta(x)\mapsto   \ee_x$. 
 We have that $T(\AAA)= \AAA(\MM\setminus\{0\}):=
 \{\ee_x \colon x\in {\MM}\setminus\{0\}\}$.
Since $\AAA(\MM\setminus\{0\})$ is an isometrically $p$-norming set for  $\ell_p(\MM\setminus\{0\})$,
 Lemma~\ref{lemma:transformation} finishes the proof.
\end{proof}

Notice that,  quantitatively, the proof of Theorem~\ref{discreteA}  gives that if $\MM$ is equipped with the $\{0,1\}$-metric, then
\[
2^{-1/p}\left(\sum_{x\in\MM\setminus\{0\}} |a_x|^p\right)^{1/p} \le \left\Vert \sum_{x\in\MM\setminus\{0\}} a_x\, \delta(x) \right\Vert_{\F_p(\MM)}
\le \left(\sum_{x\in\MM\setminus\{0\}} |a_x|^p\right)^{1/p}
\]
for all scalars $(a_x)_{x\in\MM\setminus\{0\}}$ eventually null. Going further we are going to be able to compute the quasi-norms $\left\Vert \sum_{x\in\MM\setminus\{0\}} a_x\, \delta(x) \right\Vert_{\F_p(\MM)}$ in the case when $a_x\ge 0$. Our argument relies on the construction of a suitable $d$-dimensional absolutely $p$-convex body  for every $d\in\Nat$.

\begin{Proposition}\label{prop:pbody} For every $d\in\Nat$ and every $0<p\leq 1$, there is a $p$-norm $\Vert \cdot\Vert_{(p)}$ on $\Rea^d$ such that:
\begin{enumerate}
\item[(a)] $\Vert (x_j)_{j=1}^n\Vert_{(p)}=(\sum_{j=1}^d x_j^p)^{1/p}$ if $x_j\ge 0$ for $j\in\{1,\dots,d\}$, and 
\item[(b)] $\Vert \ee_i -\ee_j\Vert_{(p)}\le 1$ for all $i$, $j\in \{1,\dots,d\}$.
\end{enumerate}
\end{Proposition}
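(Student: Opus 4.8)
The plan is to realize the desired $p$-norm as the Minkowski functional of an explicit absolutely $p$-convex body built from the generators dictated by the two requirements. Concretely, I would set
\[
Z=\{\ee_i\colon 1\le i\le d\}\cup\{\ee_i-\ee_j\colon 1\le i,j\le d\}\subseteq\Rea^d,
\]
and let $\Vert\cdot\Vert_{(p)}$ be the Minkowski functional of $\cop(Z)$, i.e.\ the functional given by formula \eqref{Minko} for this finite $Z$. Since $Z$ is finite and contains the basis $(\ee_i)_{i=1}^d$, the set $\cop(Z)=\{\sum_k\lambda_k z_k\colon \sum_k|\lambda_k|^p\le 1\}$ is absorbing and is the continuous image of the compact set $\{(\lambda_k)_k\colon\sum_k|\lambda_k|^p\le 1\}$, hence compact; so $\Vert\cdot\Vert_{(p)}$ is a genuine $p$-norm on $\Rea^d$ whose closed unit ball is exactly $\cop(Z)$.

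With this choice property (b) and the ``easy half'' of (a) come out at once. Indeed $\ee_i-\ee_j\in Z\subseteq\cop(Z)$ gives $\Vert\ee_i-\ee_j\Vert_{(p)}\le 1$, which is (b). Likewise, if $x_j\ge 0$ and $\sum_j x_j^p=1$ then $x=\sum_j x_j\ee_j$ witnesses $x\in\cop(Z)$, so $\Vert x\Vert_{(p)}\le 1$; homogeneity then yields $\Vert x\Vert_{(p)}\le(\sum_j x_j^p)^{1/p}$ on the whole positive orthant.

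The real content is the reverse inequality in (a), which I expect to be the only genuine obstacle. To handle it I would introduce the auxiliary functional
\[
\phi(x)=\sum_{j=1}^d(x_j^+)^p,\qquad x_j^+:=\max\{x_j,0\}.
\]
This $\phi$ is not homogeneous, but it is positively homogeneous of degree $p$ and, crucially, $p$-subadditive: from $(x_j+y_j)^+\le x_j^++y_j^+$ together with the elementary inequality $(s+t)^p\le s^p+t^p$ for $s,t\ge 0$ one gets $\phi(x+y)\le\phi(x)+\phi(y)$, while $\phi(\lambda x)=|\lambda|^p\phi(\sgn(\lambda)\,x)$. A one-line inspection shows $\phi(z)\le 1$ and $\phi(-z)\le 1$ for every $z\in Z$ (the relevant values being $\phi(\ee_i)=1$, $\phi(-\ee_i)=0$, and $\phi(\pm(\ee_i-\ee_j))=1$). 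Hence $\phi(\lambda z)\le|\lambda|^p$ for all scalars $\lambda$ and all $z\in Z$, so for any representation $x=\sum_k\lambda_k z_k$ with $\sum_k|\lambda_k|^p\le 1$ one obtains
\[
\phi(x)\le\sum_k\phi(\lambda_k z_k)\le\sum_k|\lambda_k|^p\le 1,
\]
i.e.\ $\phi\le 1$ on $\cop(Z)$. Finally, for $x$ in the positive orthant the vector $x/\Vert x\Vert_{(p)}$ lies in the closed unit ball $\cop(Z)$ and satisfies $\phi(x/\Vert x\Vert_{(p)})=\Vert x\Vert_{(p)}^{-p}\sum_j x_j^p$; the bound $\phi\le 1$ then gives $\sum_j x_j^p\le\Vert x\Vert_{(p)}^p$, that is $(\sum_j x_j^p)^{1/p}\le\Vert x\Vert_{(p)}$, completing (a). The crux is precisely the construction of $\phi$: it is exactly its failure to be a norm (only positive homogeneity) that lets its sublevel set $\{\phi\le 1\}$ contain all the difference vectors $\ee_i-\ee_j$ while still cutting the positive orthant along the $\ell_p$-sphere.
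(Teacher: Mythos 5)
Your proof is correct, and it takes a genuinely different---and shorter---route than the paper's. The paper builds a \emph{large} absolutely $p$-convex body $\C_{(p)}=\bigcup_{A}\C_A$ by explicitly describing one piece per orthant (assembled from positive $p$-convex hulls of the rectangles $Z_{i,j}=\{a\,\ee_i+b\,\ee_j\colon 0\le a,b\le 1\}$ and reflected by the sign-change maps $M_A$); almost all of its effort goes into the case analysis showing this union is absolutely $p$-convex, after which (a) falls out because $\C_{(p)}\cap\Rea^d_+=B_p\cap\Rea^d_+$, and (b) is wired into the generators. You instead take the \emph{smallest} admissible ball, $\cop(Z)$ with $Z=\{\ee_i\}\cup\{\ee_i-\ee_j\}$, so that absolute $p$-convexity, (b), and the upper bound in (a) are free, and you concentrate the whole difficulty in the lower bound of (a), which you settle with the auxiliary functional $\phi(x)=\sum_j (x_j^+)^p$: it is $p$-subadditive, positively $p$-homogeneous, at most $1$ on $Z\cup(-Z)$ and hence on $\cop(Z)$, and it equals $\Vert\cdot\Vert_p^p$ on $\Rea^d_+$. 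This one-sided substitute for a dual functional is exactly the device the non-locally-convex setting calls for, and it is the idea the paper's proof does not use. The trade-off: the paper's construction yields an explicit, larger unit ball with extra geometric information ($B_p\subseteq\C_{(p)}\subseteq B_\infty$, plus a description in every orthant), and the two norms genuinely differ---already for $d=2$ and $p<1$ the vector $\ee_1-\tfrac{1}{2}\ee_2$ lies in $\C_{(p)}$ but not in $\cop(Z)$, since any representation of it there needs coefficient $p$-mass at least $2^{1-p}>1$---but as the proposition only asserts existence, your norm serves just as well. Two facts you rightly rely on deserve a sentence in a final write-up: for finite $Z$ one has $\cop(Z)=\{\sum_{z\in Z}\lambda_z z\colon\sum_{z\in Z}|\lambda_z|^p\le 1\}$ (merging repeated generators only lowers the $p$-mass), and the compactness of this set is what identifies it with the closed unit ball of $\Vert\cdot\Vert_{(p)}$, justifying the step $x/\Vert x\Vert_{(p)}\in\cop(Z)$.
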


\begin{proof} Given a vector space $V$ and $Z\subseteq V$, set
\[
\cop^+(Z)=\left\{ \sum_{j=1}^k \lambda_j \, v_j \colon k\in\Nat, \,  \lambda_j\ge 0, \, \sum_{j=1}^k \lambda_j^p \le 1, v_j\in Z\right\}.
\]
For $d\in\Nat$, put $\Nat[d]=\{ 1,\dots,  d\}$. Given $\sss=(s_j)_{j=1}^d\in\Rea^d$, we let $M_\sss$ be the endomorphism of $\Rea^d$ given by
\[
M_\sss((x_j)_{j=1}^d)=(s_j \, x_j)_{j=1}^d.
\]
Given $A\subseteq\Nat[d]$, we put $M_A=M_\sss$  where $\sss=(s_j)_{j=1}^n$ is defined by $s_j=1$ for  $j\in\Nat[d] \setminus A$, and $s_j=-1$ for $j\in A$; that is,  $M_A$ is 
 the symmetry with respect to the subspace $\{ (x_j)_{j=1}^d \in \Rea^d \colon  x_j=0\;\text{for all }   j\in A\}$. Denote
\begin{align*}
\Rea^d_+&=\left \{ (x_j)_{j=1}^d\in\Rea^d \colon x_j\ge 0 \text{ for all } j\in \Nat[d] \right\},\\
B_p&=\left\{ (x_j)_{j=1}^d\in\Rea^d \colon \sum_{j=1}^d |x_j|^p\le 1 \right\},\\
B_\infty&= \left\{ (x_j)_{j=1}^d\in\Rea^d \colon  |x_j|\le 1 \text{ for all } j\in\Nat[d] \right\}.
\end{align*}
Given $i,j\in \Nat[d]$ with $i\not=j$, we define $Z_{i,j}\subseteq\Rea_+^d$ by 
\[
Z_{i,j}=\{a \, \ee_i + b	\,  \ee_j \colon 0\le a, b\le 1\}.
\]
Given disjoint sets $A$, $B\subseteq\Nat[d]$ we define  $\TT_{A,B}\subseteq\Rea^d_+$ by 
\[
\TT_{A,B}=\begin{cases}
\{ 0\}  & \text{ if } A=B=\emptyset\\
\cop^+( \{ \ee_i \colon i\in A\}) ) & \text{ if } A\not=\emptyset \text{ and } B=\emptyset,  \\
\cop^+(\{ \ee_j \colon j\in B\})  & \text{ if } A=\emptyset \text{ and } B \not=\emptyset,\\ 
\cop^+(\cup_{ (i,j)  \in A\times B} Z_{i,j})  & \text{ otherwise.}
\end{cases}
\]
 It is routine to prove that  the the family of bodies $(\TT_{A,B})$ enjoys  the following properties.
\begin{itemize}
\item[(a1)] If $\lambda$, $\mu\ge 0$ are such that $\lambda^p +\mu^p\le 1$, 
then $\lambda\, \TT_{A,B}+\mu\, \TT_{A,B} \subseteq \TT_{A,B}$ for all disjoint $A,B\subseteq \Nat[d]$.
\item[(a2)] $B_p \cap \Rea^d_+ \subseteq \TT_{\Nat[d]\setminus A,A}\subseteq B_\infty$, for all $A\subseteq \Nat[d]$.
\item[(a3)] $B_p \cap \Rea^d_+=\TT_{\Nat[d],\emptyset}$.
\item[(a4)] If $A\subseteq A_1$ and $B\subseteq B_1$, then $\TT_{A,B}\subseteq \TT_{A_1,B_1}$.
\item [(a5)] If $\xx=(x_j)_{j=1}^d \in \TT_{A,B}$ and $x_j=0$ for every $j\notin D$, where $D\subseteq \Nat[d]$, then $\xx\in \TT_{A\cap D,B\cap D}$.
\item[(a6)] If $0\le x_j \le y_j$ for every $j\in\Nat[d]$ and $(y_j)_{j=1}^d \in \TT_{A,B}$, for $A,B\subseteq \Nat[d]$ disjoint, then  $(x_j)_{j=1}^d \in \TT_{A,B}$.
\end{itemize}
Given $A\subseteq\Nat[d]$, put $\C_A=M_A(\TT_{\Nat[d]\setminus A,A})$. By definition,
\begin{itemize}
\item[(b1)] $-\C_A=\C_{\Nat[d]\setminus A}$.
\end{itemize}
We infer from (a1), (a2), (a3)  and (a6), respectively, that 
\begin{itemize}
\item[(b2)] if $\lambda$, $\mu\ge 0$ are such that $\lambda^p +\mu^p\le 1$, then $\lambda\, \C_A+\mu\, \C_A\subseteq \C_A$,  
\item[(b3)] $\{ (x_j)_{j=1}^d \in B_p \colon  \{j\colon x_j< 0\}=A \} \subseteq \C_A \subseteq B_\infty$,
\item[(b4)] $\C_\emptyset=B_p\cap \Rea^d_+$, and 
\item[(b5)] if $\sss\in [0,1]^d$ and $\xx\in \C_A$ then $M_\sss(\xx)\in\C_A$.
\end{itemize}
Consider the $d$-dimensional body
$
\C_{(p)}=\cup_{A\subseteq \Nat[d]} \C_A.
$
Properties (b2), (b3) and (b5)  give, respectively,
\begin{itemize}
\item[(c1)] $-\C_{(p)}=\C_{(p)}$, 
\item[(c2)] $B_p\subseteq \C_{(p)}\subseteq B_\infty$, and that
\item[(c3)] if $\xx \in \C_{(p)}$ and $\sss\in[0,1]^d$, then $M_\sss(\xx) \in \C_{(p)}$.
\end{itemize}
We infer from (a4) and (a5) that
\begin{itemize}
\item[(c4)] if $\xx=(x_j)_{j=1}^d\in \C_{(p)}$ and \[
\{ j\in\Nat[d] \colon x_j< 0\} \subseteq B\subseteq \{ j\in\Nat[d] \colon x_j\le 0\},
\] then $\xx \in \C_B$.
\end{itemize}
Combining (c4) with (b4) we obtain
\begin{itemize}
\item[(c5)]  $\C_{(p)}\cap \Rea^d_+=B_p\cap \Rea^d_+$.
\end{itemize}
Let us prove that $\C_{(p)}$ is absolutely $p$-convex.  Let  $\xx=(x_j)_{j=1}^d$,  $\yy=(y_j)_{j=1}^d  \in \C_{(p)}$ and $\lambda,\mu\in\Rea$ with $|\lambda|^p+|\mu|^p\le 1$. By (c1) we can assume that $\lambda,\mu\ge 0$. Let 
\begin{align*}
A & =\{ j \in \Nat[d] \colon \sgn(x_j)  \sgn(y_j)\not=-1\},\\
D & =\{ j \in \Nat[d]\setminus  A \colon \sgn( \lambda x_j + \mu y_j) = 0 \},\\
E & =\{ j \in \Nat[d]\setminus  A  \colon \sgn( \lambda x_j + \mu y_j) = \sgn(x_j) \},\\
F & =\{ j \in \Nat[d]\setminus  A  \colon \sgn( \lambda x_j + \mu y_j) = \sgn(y_j) \}.
\end{align*}
By construction $(A,D,E,F)$ is a partition of $\Nat[d]$. Note that $\lambda>0$ if $E\not=\emptyset$ and
$\mu>0$ if $F\not=\emptyset$. We define $\tilde\xx=(x_j)_{j=1}^d$,  $\tilde\yy=(y_j)_{j=1}^d$ and
 $\sss=(s_j)_{j=1}^d$ by 
\[
(\tilde x_j, \tilde y_j, s_j)=
\begin{cases}
(x_j,y_j,1) & \text{ if } j \in A, \\
(0,0,0)  & \text{ if } j \in D,\\
(x_j, 0,  (\lambda x_j)^{-1}(\lambda x_j + \mu y_j)) & \text{ if } j \in E,\\
(0, y_j, (\mu y_j)^{-1}( \lambda x_j + \mu y_j)) & \text{ if } j \in F.
\end{cases}
\]
By construction, $\sss\in[0,1]^d$ and $\lambda \xx + \mu \yy =M_\sss(\lambda \tilde \xx + \mu \tilde \yy )$.  Hence,
taking into account (c3), it suffices to prove that $\lambda \tilde \xx + \mu \tilde \yy\in\C_{(p)}$. Note that, by construction, $\sgn(\tilde x_j) \sgn(\tilde y_j)\not=-1$ for every $j\in\Nat[d]$. Therefore,  the set
$
\{ j\in\Nat[d] \colon \tilde x_j < 0 \} \cup \{ j\in\Nat[d] \colon \tilde y_j < 0 \}$ is contained in
\[ B:=\{ j\in\Nat[d] \colon \tilde x_j \le 0 \}\cap \{ j\in\Nat[d] \colon \tilde y_j \le 0 \}.
\]
Since, by (c3), $\tilde\xx$, $\tilde\yy\in\C_{(p)}$, we infer from (c4) that $\tilde\xx$, $\tilde \yy\in \C_B$. Then, by (b2), $\lambda \tilde \xx + \mu \tilde \yy\in\C_B\subseteq \C_{(p)}$.

Let $\Vert \cdot \Vert_{(p)}$ be the Minkowski functional associated to $\C_{(p)}$.  Taking into account (c2)  we infer that  $\Vert \cdot \Vert_{(p)}$ is a $p$-norm on $\Rea^d$.
By (c5), $\Vert \xx \Vert_{(p)} =\Vert \xx \Vert_p$ for every $\xx\in \Rea^d_+$.
\end{proof}
\begin{Proposition}\label{zeroone} Let $\MM$ be the  $\{0,1\}$-metric space and  $(a_x)_{x\in\MM\setminus\{0\}}$ be an eventually null family of scalars. Then
\[
 \left\Vert \sum_{x\in\MM\setminus\{0\}} a_x\, \delta(x) \right\Vert_{\F_p(\MM)} \ge 
 \left(\sum_{\substack{x\in\MM\setminus\{0\}\\ a_x\ge 0}} a_x^p\right)^{1/p}.
 \]
%In particular, if $a_x\ge 0$ for all $x\in\MM\setminus\{0\}$,
%\[
 %\left\Vert \sum_{x\in\MM\setminus\{0\}} a_x\, \delta(x) \right\Vert_{\F_p(\MM)} = \left(\sum_{x\in\MM\setminus\{0\}} a_x^p\right)^{1/p}.
%\]
\end{Proposition}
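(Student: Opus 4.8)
The plan is to exploit the universal property of $\F_p(\MM)$ recorded in Theorem~\ref{part-case}(c): every $1$-Lipschitz map $f\colon\MM\to X$ into a $p$-Banach space $X$ with $f(0)=0$ induces a linear operator $T_f\colon\F_p(\MM)\to X$ with $\Vert T_f\Vert=\LipOp(f)\le 1$, so that $\Vert T_f(\mu)\Vert_X\le\Vert\mu\Vert_{\F_p(\MM)}$ for every $\mu$. To extract the desired lower bound I would test against $X=(\Rea^d,\Vert\cdot\Vert_{(p)})$, where $\Vert\cdot\Vert_{(p)}$ is the $p$-norm supplied by Proposition~\ref{prop:pbody}. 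The relevance of that construction is precisely that it satisfies $\Vert\ee_i\Vert_{(p)}=1$ and $\Vert\ee_i-\ee_j\Vert_{(p)}\le 1$ (by its properties (a) and (b)), while coinciding with the $\ell_p$-norm on $\Rea^d_+$.

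Since $(a_x)$ is eventually null, the set $\{x\in\MM\setminus\{0\}\colon a_x>0\}$ is finite; if it is empty the right-hand side of the asserted inequality is $0$ and there is nothing to prove, so I assume it equals $\{x_1,\dots,x_d\}$ with $d\ge 1$. I would then define $f\colon\MM\to\Rea^d$ by $f(x_i)=\ee_i$ for $i\in\{1,\dots,d\}$ and $f(x)=0$ for every remaining $x\in\MM$ (in particular $f(0)=0$). Because $\MM$ carries the $\{0,1\}$-metric, checking that $f$ is $1$-Lipschitz reduces to verifying $\Vert f(x)-f(y)\Vert_{(p)}\le 1$ for all $x\ne y$; the only possible nonzero values being the $\ee_i$, the cases to inspect are $\Vert\ee_i\Vert_{(p)}=1$ (property (a) applied to $\ee_i\in\Rea^d_+$) and $\Vert\ee_i-\ee_j\Vert_{(p)}\le 1$ (property (b)), both at most $1$. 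Thus $\LipOp(f)\le 1$.

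Feeding $f$ into Theorem~\ref{part-case}(c) and writing $\mu=\sum_x a_x\delta(x)$, linearity gives $T_f(\mu)=\sum_x a_x f(x)=\sum_{i=1}^d a_{x_i}\ee_i$, since every point outside $\{x_1,\dots,x_d\}$ is annihilated by $f$. As this vector lies in $\Rea^d_+$, property (a) evaluates its norm as $\bigl(\sum_{i=1}^d a_{x_i}^p\bigr)^{1/p}=\bigl(\sum_{a_x\ge 0}a_x^p\bigr)^{1/p}$. Combining this with $\Vert T_f(\mu)\Vert_{(p)}\le\Vert T_f\Vert\,\Vert\mu\Vert_{\F_p(\MM)}\le\Vert\mu\Vert_{\F_p(\MM)}$ delivers the claim.

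The genuine difficulty has in fact been discharged in advance inside Proposition~\ref{prop:pbody}; the conceptual point here is simply that a lower bound in free-space theory is obtained by testing against a concrete Lipschitz map, and the body $\C_{(p)}$ is engineered exactly so that the test map $x_i\mapsto\ee_i$ remains $1$-Lipschitz yet captures the entire $\ell_p$-mass of the nonnegative coefficients.
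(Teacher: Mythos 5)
Your proof is correct and takes essentially the same approach as the paper's: both test the free-space norm against the $p$-norm $\Vert\cdot\Vert_{(p)}$ of Proposition~\ref{prop:pbody} via the $1$-Lipschitz map that sends the points with positive coefficient to distinct unit vectors $\ee_i$ and all remaining points to $0$, then evaluate on $\Rea^d_+$ where $\Vert\cdot\Vert_{(p)}$ agrees with the $\ell_p$-norm. The only cosmetic differences are that you invoke Theorem~\ref{part-case}(c) explicitly where the paper uses the definition of the norm directly, and that you dispose of the degenerate case $\{x\colon a_x>0\}=\emptyset$ separately.
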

\begin{proof}
Let $(a_x)_{x\in\MM\setminus\{0\}}\in[0,\infty)^{\MM\setminus\{0\}}$ be eventually null. Pick $d\in\Nat$ and a one-to-one map $\phi\colon \Nat[d] \to \MM\setminus\{0\}$  such that  
\[
 \{ x \in \MM\setminus\{0\} \colon a_x>0 \} \subseteq \phi(  \Nat[d] )\subseteq \{ x \in \MM\setminus\{0\} \colon a_x\ge 0 \}.
 \]
 Let $\Vert \cdot\Vert_{(p)}$ be as in Proposition~\ref{prop:pbody} and
consider the mapping $f\colon\MM\to ( \Rea^d,\Vert \cdot\Vert_{(p)})$ given by $\phi(k) \mapsto \ee_k$ for all $k\in \Nat[d]$ and 
 $x \mapsto 0$ if $x\notin \phi(  \Nat[d] )$. Since $\Vert\ee_i\Vert_{(p)}$, $\Vert \ee_i-\ee_j\Vert_{(p)}\le 1$ for every $i,j\in\Nat[d]$, $f$ is $1$-Lipschitz. Therefore
\begin{align*}
\left\Vert \sum_{x\in\MM\setminus\{0\}} a_x\, \delta(x) \right\Vert_{\F_p(\MM)}
&\ge  \left\Vert \sum_{x\in\MM\setminus\{0\}} a_x\, f (x) \right\Vert_{(p)}
= \left\Vert \sum_{k=1}^d  a_{\phi(k)} \ee_k\right\Vert_{(p)}\\
&= \left( \sum_{k=1}^d  a_{\phi(k)}^p\right)^{1/p}= \left(\sum_{\substack{x\in\MM\setminus\{0\}\\ a_x\ge 0}} a_x^p\right)^{1/p}.\qedhere
\end{align*}
%From this inequality, the later identity is clear.
\end{proof}

On occasion it will be convenient to know that the Lipschitz free $p$-space over a quasimetric space and  the Lipschitz free $p$-space over its completion are the same. Let us state this basic fact for reference and provide a proof using the tools that we introduced before.

\begin{Proposition}\label{density} Let $\MM$ be a  $p$-metric space for some $0<p\le 1$ and let $\NN$ be a dense subset of $\MM$ equipped with the same quasimetric.  Then 
$\F_p(\NN)\approx\F_p({\MM})$ isometrically. In fact, the canonical linear map is an isometry.
\end{Proposition}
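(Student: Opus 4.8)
The plan is to realize the canonical map as an isometry between two spaces, each carrying an explicit isometrically $p$-norming set, and then to invoke Lemma~\ref{lemma:transformation}. First I would normalize the base point: since the base point is immaterial (as noted in the Remark after the definition of $\F_{p}(\MM)$) and $\NN$ is dense, hence nonempty, we may assume the distinguished point $0$ of $\MM$ lies in $\NN$. The inclusion $\NN\hookrightarrow\MM$ is then a $1$-Lipschitz map fixing $0$, so Lemma~\ref{key} produces the canonical linear operator $T\colon\F_{p}(\NN)\to\F_{p}(\MM)$ with $T\delta_{\NN}(x)=\delta_{\MM}(x)$ for $x\in\NN$ and $\Vert T\Vert=1$. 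This is precisely the map whose isometric character we must establish.

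By Corollary~\ref{pNormingset}, the set $\AAA(\NN)$ is isometrically $p$-norming for $\F_{p}(\NN)$ and $\AAA(\MM)$ is isometrically $p$-norming for $\F_{p}(\MM)$. Identifying $\AAA(\NN)$ with its image $T(\AAA(\NN))=\{(\delta_{\MM}(y)-\delta_{\MM}(x))/\rho(x,y)\colon x,y\in\NN,\ x\neq y\}\subseteq\AAA(\MM)$, the crux is to show this image is dense in $\AAA(\MM)$ inside $\F_{p}(\MM)$. To that end, fix distinct $x,y\in\MM$ and, using density, choose $x_{n},y_{n}\in\NN$ with $\rho(x_{n},x),\rho(y_{n},y)\to 0$; since $\rho^{\,p}$ is a genuine metric the $p$-metric $\rho$ is continuous, so $\rho(x_{n},y_{n})\to\rho(x,y)>0$. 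Because $\delta_{\MM}$ is an isometric embedding (Theorem~\ref{part-case}(a)), we have $\Vert\delta_{\MM}(a)-\delta_{\MM}(b)\Vert_{\F_{p}(\MM)}=\rho(a,b)$, and a routine decomposition together with $p$-subadditivity bounds the $p$-th power of the $\F_{p}(\MM)$-distance between the $n$-th atom and the limiting atom by a sum of three terms of the shapes $\rho(y_{n},y)^{p}/\rho(x_{n},y_{n})^{p}$, $\rho(x_{n},x)^{p}/\rho(x_{n},y_{n})^{p}$, and $\rho(x,y)^{p}\,|\rho(x_{n},y_{n})^{-1}-\rho(x,y)^{-1}|^{p}$, each of which tends to $0$. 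This yields the claimed density.

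With density in hand, Lemma~\ref{lemma:density} upgrades $\AAA(\NN)$, viewed inside $\F_{p}(\MM)$, to an isometrically $p$-norming set for $\F_{p}(\MM)$. Now $T$ restricts to a linear map of $\spn(\AAA(\NN))=\PP(\NN)$ carrying the isometrically $p$-norming set $\AAA(\NN)\subseteq\F_{p}(\NN)$ onto the isometrically $p$-norming set $T(\AAA(\NN))\subseteq\F_{p}(\MM)$; this restriction is one-to-one because the vectors $\delta_{\MM}(x)$ are linearly independent (Proposition~\ref{linindep}). Lemma~\ref{lemma:transformation} then shows that $T$ extends to an onto isometry $\F_{p}(\NN)\to\F_{p}(\MM)$, which by density coincides with the canonical map $T$ itself. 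The main obstacle is the density step of the middle paragraph: one must ensure that the normalizing factors $\rho(x_{n},y_{n})$ stay bounded away from $0$, which is exactly where continuity of $\rho$ and the strict positivity $\rho(x,y)>0$ are used; everything else is bookkeeping via the auxiliary lemmas on $p$-norming sets.
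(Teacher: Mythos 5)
Your proposal is correct and follows essentially the same route as the paper's proof: realize the canonical map $L_\jmath$ on $\PP(\NN)$, use Corollary~\ref{pNormingset} together with Lemma~\ref{lemma:density} to see that $L_\jmath(\AAA(\NN))$ is isometrically $p$-norming in $\F_p(\MM)$, and conclude via Lemma~\ref{lemma:transformation}. The only difference is that you spell out the density of $L_\jmath(\AAA(\NN))$ in $\AAA(\MM)$ (the three-term decomposition and continuity of $\rho$), a step the paper compresses into the words ``by density.''
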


\begin{proof} The canonical linear  map $L_{\jmath}\colon \PP({\NN})\to \PP({\MM})$  induced by the inclusion $\jmath$
from $\NN$ into ${\MM}$ is one-to-one on $\PP(\NN)$. By density, the set of molecules of $\MM$ of the form
\[
L_{\jmath}(\AAA(\NN))=\left\{\frac{\delta_{\MM}(y)-\delta_{\MM}(x)}{\rho(x,y)} \colon x,y\in \NN, \, x\not=y\right\}
\]
is an isometrically $p$-norming set in  $\F_{p}(\MM)$. Lemma~\ref{lemma:transformation} and Corollary~\ref{pNormingset}    yield that $L_{\jmath}$ extends to a linear isometry from $\F_p(\NN)$ onto $\F_p({\MM})$.
\end{proof}

We will study in detail some properties of the canonical map $L_{j}$  in Section~\ref{structureSec}. For the time being, to finish this section we provide  a sufficient condition for  $L_{\jmath}$ to be an isomorphic embedding.
 
\begin{Definition} Let $\MM$ be a $p$-metric space, $0<p\le 1$, and let $\NN$ be a subset of $\MM$. A Lipschitz map $r\colon \MM \to \NN$ is called a \textit{Lipschitz retraction} if it is the identity on $\NN$. When such a Lipschitz retraction exists we say that $\NN$ is a \textit{Lipschitz retract} of $\MM$.
\end{Definition}

\begin{Lemma}[cf.\ \cite{GodefroyKalton2003}*{Lemma 2.2}]\label{lemma:retract} 
Let $\MM$ be a  pointed $p$-metric space $(0<p\le 1)$ and $\NN$ be a Lipschitz retract of $\MM$. Then the inclusion map $\jmath\colon \NN \to \MM$ induces an isomorphic embedding $L_\jmath\colon \F_{p}(\NN)\to \F_{p}(\MM)$ onto a complemented subspace.
\end{Lemma}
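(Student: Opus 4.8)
The plan is to mimic the classical functorial argument for $p=1$, using Lemma~\ref{key} as the linearization tool. Let $r\colon\MM\to\NN$ be a Lipschitz retraction, so that $r$ restricts to the identity on $\NN$; since the base point $0$ lies in $\NN$ we have $r(0)=0$, and of course the inclusion $\jmath\colon\NN\to\MM$ also fixes $0$ and is an isometry onto its image. Both $\jmath$ and $r$ are thus base-point-preserving Lipschitz maps between pointed $p$-metric spaces, so Lemma~\ref{key} furnishes bounded linear operators $L_\jmath\colon\F_p(\NN)\to\F_p(\MM)$ and $L_r\colon\F_p(\MM)\to\F_p(\NN)$ with $\Vert L_\jmath\Vert=\LipOp(\jmath)=1$ and $\Vert L_r\Vert=\LipOp(r)$.

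The central step is the functoriality of the assignment $f\mapsto L_f$. First I would record that $L_r\circ L_\jmath$ is a bounded linear map satisfying
\[
(L_r\circ L_\jmath)\circ\delta_\NN=L_r\circ\delta_\MM\circ\jmath=\delta_\NN\circ r\circ\jmath=\delta_\NN\circ\Id_\NN=\Id_{\F_p(\NN)}\circ\delta_\NN,
\]
where I used $L_\jmath\circ\delta_\NN=\delta_\MM\circ\jmath$ and $L_r\circ\delta_\MM=\delta_\NN\circ r$ from Lemma~\ref{key}. Since by Theorem~\ref{part-case}(c) the linear map extending a prescribed composition with $\delta_\NN$ is \emph{unique}, the two bounded linear maps $L_r\circ L_\jmath$ and $\Id_{\F_p(\NN)}$ must coincide, that is, $L_r\circ L_\jmath=\Id_{\F_p(\NN)}$.

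From this identity the conclusion is immediate. Indeed, $L_\jmath$ is bounded below, because $\Vert\mu\Vert=\Vert L_r(L_\jmath\mu)\Vert\le\Vert L_r\Vert\,\Vert L_\jmath\mu\Vert$ for every $\mu\in\F_p(\NN)$, so $L_\jmath$ is injective with closed range and is therefore an isomorphism onto its image. Next, set $P=L_\jmath\circ L_r\colon\F_p(\MM)\to\F_p(\MM)$. Then $P$ is bounded and linear with $P^2=L_\jmath\circ(L_r\circ L_\jmath)\circ L_r=L_\jmath\circ L_r=P$, so $P$ is a projection; and its range coincides with $L_\jmath(\F_p(\NN))$, since the inclusion $P(\F_p(\MM))\subseteq L_\jmath(\F_p(\NN))$ is clear while $P\circ L_\jmath=L_\jmath\circ(L_r\circ L_\jmath)=L_\jmath$ gives the reverse inclusion. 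Hence $L_\jmath(\F_p(\NN))$ is a complemented subspace of $\F_p(\MM)$, as desired.

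I do not expect a serious obstacle here; the only point requiring genuine care is the functoriality identity $L_r\circ L_\jmath=L_{r\circ\jmath}$, which is not an extra hypothesis but a consequence of the uniqueness clause in the universal property of Theorem~\ref{part-case}(c). One should also make sure at the outset that $0\in\NN$, so that $\NN$ is genuinely a pointed subspace and the retraction $r$ preserves the base point; this is what legitimizes forming the induced operators $L_\jmath$ and $L_r$ on the free $p$-spaces.
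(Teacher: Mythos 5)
Your proof is correct and follows essentially the same route as the paper: apply Lemma~\ref{key} to the inclusion $\jmath$ and the retraction $r$ to get $L_r\circ L_\jmath=\Id_{\F_p(\NN)}$, whence $L_\jmath$ is an isomorphic embedding and $L_\jmath\circ L_r$ is a projection onto $L_\jmath(\F_p(\NN))$. The only difference is that you spell out the details the paper leaves implicit (the uniqueness argument for functoriality, boundedness below, and the idempotence and range computation for the projection), which is exactly the right way to justify the paper's one-line deduction.
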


\begin{proof} Without loss of generality we may and do assume that  $0\in \NN$. Let $\jmath\colon \NN \to \MM$ be the inclusion map  and let $r\colon\MM\to\NN$ be a Lipschitz retraction. Lemma~\ref{key} yields  $L_r\circ L_\jmath=\Id_{\F_{p}(\NN)}$, i.e., $L_\jmath\circ L_{r}$ is a linear projection from $\F_{p}(\MM)$ onto the linear subspace $L_{\jmath}(\F_{p}(\NN))$ of $\F_{p}(\MM)$  and $L_{\jmath}$ is an isomorphism.
\end{proof}

\subsection{Envelopes and duality}

\begin{Proposition}\label{envelopeoffree} Suppose $\MM$ is a pointed $p$-metric space with $q$-metric envelope $\widetilde{\MM^q}$, where $0<p<q\le 1$.  Then: 
\begin{enumerate}
\item[(a)] The $q$-Banach envelope of   $\F_{p}(\MM)$ is $\F_{q}(\widetilde{\MM^q})$.

\item[(b)]  In the particular case that $\MM$ is a $p$-Banach space $X$ with $q$-Banach envelope $\widehat{X^q}$, the $q$-Banach envelope of  $\F_{p}(X)$ is $\F_{q}(\widehat{X^q})$.

\end{enumerate}
\end{Proposition}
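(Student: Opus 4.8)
The plan is to prove part (a) via the universal properties established earlier, and then deduce part (b) as a special case using the Remark following Proposition~\ref{prop:metricvsnormed}. The key conceptual point is that both constructions---taking the $q$-Banach envelope of $\F_p(\MM)$ and forming $\F_q(\widetilde{\MM^q})$---produce $q$-Banach spaces characterized by essentially the same universal mapping property, so uniqueness forces them to coincide isometrically.

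First I would make the universal property of the $q$-Banach envelope precise. By the discussion surrounding $i_{X,q}$, the space $\widehat{\F_p(\MM)^q}$ together with the canonical map $i\colon \F_p(\MM)\to\widehat{\F_p(\MM)^q}$ is the unique $q$-Banach space through which every bounded linear map from $\F_p(\MM)$ into a $q$-Banach space factors isometrically. On the other side, I would record the analogous property for $\F_q(\widetilde{\MM^q})$ assembled from two ingredients: Theorem~\ref{part-case} gives the factorization of Lipschitz maps $\widetilde{\MM^q}\to X$ through $\F_q(\widetilde{\MM^q})$, and the Remark after Proposition~\ref{prop:metricvsnormed} gives the factorization of Lipschitz maps $\MM\to X$ (for $X$ a $q$-metric space) through the quotient map $Q\colon\MM\to\widetilde{\MM^q}$.

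The core step is to show these two universal properties agree. I would argue that for any $q$-Banach space $X$, bounded linear maps $\F_p(\MM)\to X$ are in isometric bijection with bounded linear maps $\F_q(\widetilde{\MM^q})\to X$. Given a bounded linear $T\colon\F_p(\MM)\to X$, precomposing with $\delta_\MM$ yields a $\LipOp\le\Vert T\Vert$ map $\MM\to X$ (using that $X$, being a $q$-Banach space with $q>p$, is also a $p$-Banach space, so Theorem~\ref{part-case}(c) applies and gives $\Vert T\Vert=\LipOp(T\circ\delta_\MM)$). Since $X$ is $q$-metric, this Lipschitz map factors through $Q$ to give a Lipschitz map $\widetilde{\MM^q}\to X$ of the same constant, which then linearizes through $\F_q(\widetilde{\MM^q})$ with norm $=\LipOp$. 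Running this correspondence in both directions, and checking it respects composition with the natural maps $\delta_\MM$ and $\delta_{\widetilde{\MM^q}}\circ Q$, shows that $\widehat{\F_p(\MM)^q}$ and $\F_q(\widetilde{\MM^q})$ solve the same universal problem with respect to $\F_p(\MM)$, hence are isometrically isomorphic by the usual uniqueness argument. The one compatibility I would verify carefully is that the canonical map $i_{\F_p(\MM),q}$ is identified with the linearization $L_Q$ of $Q\colon\MM\to\widetilde{\MM^q}$ (viewed as a map into $\F_q(\widetilde{\MM^q})$), so that densely defined formulas match up on the spanning molecules $\delta(x)$.

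Part (b) then follows immediately: when $\MM=X$ is a $p$-Banach space, the Remark after Proposition~\ref{prop:metricvsnormed} identifies the $q$-metric envelope $\widetilde{X^q}$ with (a dense subset of) the $q$-Banach envelope $\widehat{X^q}$, and by Proposition~\ref{density} the free space over a dense subset equals the free space over its completion. Substituting $\widetilde{X^q}=\widehat{X^q}$ into part (a) gives $\F_q(\widehat{X^q})$. The main obstacle I anticipate is purely bookkeeping: one must track the distinguished base points and the isometric (not merely isomorphic) nature of each factorization constant through two layers of universal properties, ensuring that the constants $\LipOp$, $\Vert T\Vert$, and the envelope norms $\Vert\cdot\Vert_{c,q}$ all line up on the nose rather than up to a constant. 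Invoking the intrinsic norm formula \eqref{BEnorm} for $\Vert\cdot\Vert_{c,q}$ alongside Corollary~\ref{alternative-norm} offers a concrete fallback for verifying the isometry directly on finite sums of molecules should the abstract argument leave any ambiguity.
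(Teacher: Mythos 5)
Your proposal is correct, and it reaches the conclusion by a genuinely different route than the paper. You establish (a) abstractly: composition with $L_Q\colon \F_p(\MM)\to\F_q(\widetilde{\MM^q})$ sets up an isometric bijection between bounded operators $\F_q(\widetilde{\MM^q})\to X$ and bounded operators $\F_p(\MM)\to X$ for every $q$-Banach space $X$ --- using that such an $X$ is at once a $p$-Banach space (so Theorem~\ref{part-case}(c) applies over $\MM$) and a $q$-metric space (so the universal property of the $q$-metric envelope applies) --- and then you invoke uniqueness of the object solving this universal problem. This works, and the bookkeeping you flag is exactly what is needed: $\Vert L_Q\Vert\le 1$ (because $\LipOp(Q)\le 1$ and $\delta_{\widetilde{\MM^q}}$ is isometric) and density of the range of $L_Q$ (it contains $\PP(\widetilde{\MM^q})$), which together guarantee both the uniqueness of the factorizations and that the final isomorphism is an isometry rather than a mere isomorphism; your scheme in fact mirrors the paper's own proof of the Peetre-type formula \eqref{BEnorm}, where ``same universal property, hence isometric'' is also the mechanism. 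The paper argues differently at the key step: after drawing the same commutative diagram from the two universal properties, it shows via Lemma~\ref{lemma:density} that $i(\AAA(\MM))$ is an isometrically $q$-norming set for $\widehat{\F_p(\MM)^q}$, and then transports it onto $\AAA(\widetilde{\MM^q})$ by Lemma~\ref{lemma:transformation}, concluding that $\widetilde{L_Q}$ is an onto isometry. What each buys: your argument is softer, avoids the norming-set machinery entirely, and makes the categorical content transparent; the paper's argument is concrete, exhibits the isometry directly on the molecules $\delta(x)-\delta(y)$, and recycles the $p$-norming toolkit of Section~\ref{Prelim} that is used throughout the paper. For (b) the two treatments agree (yours is in fact more explicit, as the paper leaves (b) implicit): the Remark after Proposition~\ref{prop:metricvsnormed} identifies $\widetilde{X^q}$ with a dense subset of $\widehat{X^q}$, and Proposition~\ref{density} then upgrades (a) to $\F_q(\widehat{X^q})$.
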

\begin{proof}The universal properties of  $q$-metric envelopes and  $q$-Banach envelopes yield the commutative diagram
\[
\xymatrix{{\MM}\ar[r]^Q \ar[d]_{\delta} & \widetilde{{\MM}^q}\ar[d]^{\delta}\\
\F_p({\MM}) \ar[r]^{L_Q} \ar[d]_{i}  &  \F_q(  \widetilde{{\MM}^q})\\
 \widehat{\F_p({\MM})^q} \ar[ur]_{\widetilde{L_Q}}   }
\]
Since co$_p(\AAA({\MM}))$ is dense in the unit ball $B$ of  $\F_p({\MM})$ and $i(\text{co}_q(B))$ is dense in the unit ball of $ X:=\widehat{\F_p({\MM})^q}$, we infer that  $\text{co}_q(i(\AAA({\MM})))$ is dense in the unit ball of $X$. Therefore, by Lemma~\ref{lemma:density}, $A:=i(\AAA({\MM}))$ is an isometrically $q$-norming set for $X$. Moreover, $\widetilde{L_Q}$ is a bijection from $i(\PP(\MM))$ onto $\PP( \widetilde{{\MM}^q})=\PP(Q(\MM))$ and $ \widetilde{L_Q} (A)=\AAA( \widetilde{{\MM}^q})=\AAA(Q(\MM))$. We deduce from Lemma~\ref{lemma:transformation} that   $\widetilde{L_Q}$  is an isometric isomorphism.
\end{proof}
 
\begin{Remark} The previous proposition implies, for example, that $\F_p(\Rea)$ is not isomorphic to $L_p$ for $p<1$. Indeed, its Banach envelope is $L_1$, while the Banach envelope of $L_p$ is trivial, and if two $p$-spaces are isomorphic, their envelopes are also isomorphic.\end{Remark}

\begin{Remark}  
Roughly speaking, it could be argued that given a $r$-metric space $\MM$, $0<r\le 1$, the family $(\F_{p}(\widetilde{\MM^p})) _{0< p\le 1}$,
where $\F_{p}(\widetilde{\MM^p})=\F_p(\MM)$ if $p\le r$, forms a scale of 
quasi-Banach spaces in the same way as the family $(\ell_p)_{0<p\le 1}$ does. Indeed, given $p\le q<1$, Proposition~\ref{envelopeoffree} provides  a canonical range-dense   linear map $L_{q,p}\colon \F_{p}(\widetilde{\MM^p}) \to  \F_{q}(\widetilde{\MM^q})$ with $\Vert L_{q,p}\Vert \le 1$ and, if $q<s\le 1$, we have $L_{s,p}=L_{s,q} \circ L_{q,p}$.

Let us restrict our attention to the case when $p<r$. Then
\[
L_{r,p}\colon \F_p(\MM)\to \F_r(\MM)
\]
is the identity map on $\PP(\MM)$ and, hence, it is one-to-one on a dense subspace. However we do not know if this map is always injective. In the case when $r=1$ we would like to point out that the map $L_{1,p}\colon \F_p(\MM)\to\F(\MM)$ is one-to-one if and only if $\F_{p}(\MM)^{\ast}$ separates the points of $\F_{p}(\MM)$.
\end{Remark}

\begin{Corollary}\label{dualAEp} Let $\MM$ be a pointed $p$-metric space, $0<p\le 1$. Then $\F_{p}(\MM)^{\ast} =\Lip_0(\MM)$, i.e.,  given $\phi\in \F_{p}(\MM)^{\ast}$ there is a unique $f\in \Lip_0(\MM)$ so that $\phi\big(\sum a_{i}\delta(x_{i})\big)  = \sum a_{i}f(x_{i})$ for every $\sum a_{i}\delta(x_{i})\in \F_{p}(\MM)$, and the map $\phi \mapsto f$ is a linear isometry of $\F_{p}(\MM)^{\ast}$ onto $\Lip_0(\MM)$. In particular,  $\F_{p}(\MM)^{\ast}=\{0\}$ if $\Lip_0(\MM)=\{0\}$.
\end{Corollary}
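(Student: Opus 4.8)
The plan is to obtain the identification as an immediate consequence of the universal property of $\F_p(\MM)$ recorded in Theorem~\ref{part-case}, applied with the one-dimensional target space. The crucial observation is that $(\Rea,|\cdot|)$ is itself a $p$-Banach space for every $0<p\le 1$: indeed, for $0<p\le 1$ one has $|a+b|^p\le |a|^p+|b|^p$, so the absolute value is $p$-subadditive. Consequently $\Rea$ is an admissible target in Theorem~\ref{part-case}(c), and a Lipschitz map $f\colon\MM\to\Rea$ with $f(0)=0$ is precisely an element of $\Lip_0(\MM)$, with $\LipOp(f)=\Vert f\Vert_{\Lip}$.

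First I would set up the candidate isometry. Given $f\in\Lip_0(\MM)$, Theorem~\ref{part-case}(c) (with $X=\Rea$) furnishes a unique linear map $T_f\colon\F_p(\MM)\to\Rea$ with $T_f\circ\delta_\MM=f$ and $\Vert T_f\Vert=\LipOp(f)=\Vert f\Vert_{\Lip}$; being bounded, $T_f\in\F_p(\MM)^{\ast}$. The assignment $f\mapsto T_f$ is linear, since for $f,g\in\Lip_0(\MM)$ and scalars $\alpha,\beta$ the functional $\alpha T_f+\beta T_g$ satisfies $(\alpha T_f+\beta T_g)\circ\delta_\MM=\alpha f+\beta g$, so by the uniqueness clause it equals $T_{\alpha f+\beta g}$. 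The identity $\Vert T_f\Vert=\Vert f\Vert_{\Lip}$ says that this linear map is isometric, and unwinding the definition gives $T_f\big(\sum a_i\delta(x_i)\big)=\sum a_i f(x_i)$, which is exactly the formula in the statement.

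Next I would verify surjectivity by exhibiting the inverse. Given $\phi\in\F_p(\MM)^{\ast}$, set $f:=\phi\circ\delta_\MM$. Then $f(0)=\phi(\delta(0))=0$, and since $\delta_\MM$ is an isometric embedding (Theorem~\ref{part-case}(a)),
\[
|f(x)-f(y)|=|\phi(\delta(x)-\delta(y))|\le\Vert\phi\Vert\,\Vert\delta(x)-\delta(y)\Vert_{\F_p(\MM)}=\Vert\phi\Vert\,\rho(x,y),
\]
so $f\in\Lip_0(\MM)$ with $\LipOp(f)\le\Vert\phi\Vert$. The functionals $T_f$ and $\phi$ agree on every $\delta(x)$, hence on $\spn\{\delta(x)\colon x\in\MM\}$, which is dense by Theorem~\ref{part-case}(b); by continuity $T_f=\phi$. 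Thus $\phi\mapsto\phi\circ\delta_\MM$ is a two-sided inverse of $f\mapsto T_f$, and $f$ is the unique element of $\Lip_0(\MM)$ satisfying the displayed formula, since any such $f$ is determined by its values $f(x)=\phi(\delta(x))$.

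Honestly there is no serious obstacle here: once one notes that $\Rea$ qualifies as a $p$-Banach space, Theorem~\ref{part-case} delivers linearity, the isometric norm equality, and the bijection almost for free. The only point requiring a line of care is checking that the function built from an abstract functional $\phi$ lands in $\Lip_0(\MM)$, which is precisely where the isometric nature of $\delta_\MM$ from Theorem~\ref{part-case}(a) enters. The final ``in particular'' assertion is then immediate, since an isometric isomorphism sends the zero space to the zero space: $\Lip_0(\MM)=\{0\}$ forces $\F_p(\MM)^{\ast}=\{0\}$.
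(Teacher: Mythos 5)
Your proof is correct and follows essentially the same route as the paper: the paper's (much terser) proof also identifies $\F_p(\MM)^{\ast}$ with $\Lip_0(\MM)$ by restricting functionals to $\delta(\MM)$ and extending Lipschitz functions via the universal property of Theorem~\ref{part-case} with target $\Rea$. You have merely written out the details the paper leaves implicit — notably that $(\Rea,|\cdot|)$ is a legitimate $p$-Banach target and that density of $\spn\{\delta(x)\}$ yields the two-sided inverse — which is exactly the intended argument.
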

\begin{proof}  By identifying $\MM$ with $\delta(\MM)\subseteq \F_p(\MM)$ we get that the restriction of any $\phi\in\F_p(\MM)^*$ to $\MM$ belongs to $\Lip_0(\MM)$. And conversely, any $f\in\Lip_0(\MM)$ uniquely extends by the universal property to an element of $\F_p(\MM)^*$. This correspondence is a linear isometry.\end{proof}

 \begin{Corollary}\label{SecondC} Let $\MM$ and $\NN$ be  pointed metric spaces and suppose $0<p< 1$. If $\F_{p}(\MM)\approx \F_{p}(\NN)$ then $\F(\MM)\approx \F(\NN)$.
 \end{Corollary}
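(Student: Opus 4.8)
The plan is to exploit the relationship between Lipschitz free $p$-spaces and their Banach envelopes established in Proposition~\ref{envelopeoffree}, together with the universal factorization property of the Banach envelope recorded after Definition~\ref{BanEnvelnorm}. The key observation is that isomorphisms are preserved under passing to envelopes: if two quasi-Banach spaces are isomorphic, then their $q$-Banach envelopes are isomorphic as well. This is precisely the principle already invoked in the Remark following Proposition~\ref{envelopeoffree} to distinguish $\F_p(\Rea)$ from $L_p$.

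First I would recall that $\MM$ and $\NN$ are (genuine) metric spaces, so that, viewed as $p$-metric spaces, their $1$-metric envelopes coincide with themselves: $\widetilde{\MM^1}=\MM$ and $\widetilde{\NN^1}=\NN$. Indeed, for a metric $\rho$ the triangle inequality already holds, so the infimum defining $\widetilde{\rho_1}$ in the Remark after Proposition~\ref{prop:metricvsnormed} is attained by the trivial chain and equals $\rho$, whence the quotient is trivial and the envelope is $\MM$ itself. Applying Proposition~\ref{envelopeoffree}(a) with $q=1$ then gives that the Banach envelope of $\F_p(\MM)$ is $\F_1(\widetilde{\MM^1})=\F(\MM)$, and similarly the Banach envelope of $\F_p(\NN)$ is $\F(\NN)$.

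Next I would assemble these facts. Suppose $T\colon \F_p(\MM)\to\F_p(\NN)$ is an onto isomorphism. The composition $i_{\F_p(\NN),1}\circ T\colon \F_p(\MM)\to\widehat{\F_p(\NN)}$ is a bounded linear map into a Banach space, so by the universal property of the Banach envelope it factors uniquely through $i_{\F_p(\MM),1}$ as $\widehat{T}\colon \widehat{\F_p(\MM)}\to\widehat{\F_p(\NN)}$. Applying the same construction to $T^{-1}$ produces $\widehat{T^{-1}}$ in the reverse direction, and uniqueness of the factorization forces $\widehat{T}$ and $\widehat{T^{-1}}$ to be mutually inverse. Hence $\widehat{\F_p(\MM)}\approx\widehat{\F_p(\NN)}$, which by the previous paragraph reads $\F(\MM)\approx\F(\NN)$.

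The only subtle point, and the step I expect to require the most care, is the verification that $\widehat{T}$ and $\widehat{T^{-1}}$ are genuinely inverse to each other rather than merely bounded in both directions; this rests on the \emph{uniqueness} clause of the universal property, since both $\widehat{T^{-1}}\circ\widehat{T}$ and the identity on $\widehat{\F_p(\MM)}$ are factorizations of $i_{\F_p(\MM),1}$ through itself, and the same for the other composite. Everything else is a direct invocation of the cited results, so no delicate estimates are needed.
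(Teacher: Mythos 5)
Your proposal is correct and follows essentially the same route as the paper, whose proof is the one-line instruction to take Banach envelopes in Proposition~\ref{envelopeoffree}(a); you have merely filled in the routine details that the paper leaves implicit, namely that a metric space equals its own $1$-metric envelope and that isomorphic quasi-Banach spaces have isomorphic Banach envelopes via the uniqueness clause of the universal property.
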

 \begin{proof} Just take Banach envelopes in Proposition~\ref{envelopeoffree} (a).
  \end{proof}

The last theorem of this section extends to the case when $0<p<1$ a result of Naor and Schechtman \cite{NaorSchechtman2007}.
\begin{Theorem}For any $0<p\le 1$, the $p$-Banach spaces $\F_{p}(\Rea)$ and $\F_{p}(\Rea^2)$ are not isomorphic.
\end{Theorem}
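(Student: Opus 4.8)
The plan is to treat the borderline case $p=1$ separately and to reduce every $0<p<1$ to it by passing to Banach envelopes. Throughout, $\Rea$ and $\Rea^2$ carry their usual (Euclidean) metrics, which are genuine metrics and hence $p$-metrics for all $0<p\le 1$.

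For $p=1$ the statement is essentially the theorem of Naor and Schechtman. Indeed, by Theorem~\ref{LpasFreespace} (applied with $p=1$, so that the $1$-metric $|x-y|^{1/1}$ is the usual metric) we have $\F_1(\Rea)=\F(\Rea)\approx L_1$ isometrically. On the other hand, \cite{NaorSchechtman2007} shows that $\F(\Rea^2)$ does not linearly embed into $L_1$; in particular $\F(\Rea^2)$ is not isomorphic to $L_1\approx\F(\Rea)$, which is the assertion for $p=1$.

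For $0<p<1$ I would argue by contradiction, exploiting the fact (recorded after Proposition~\ref{envelopeoffree}) that isomorphic quasi-Banach spaces have isomorphic Banach envelopes: an isomorphism $T\colon X\to Y$ induces, through the universal property of $i_{X,1}$, an isomorphism $\widehat T\colon \widehat X\to \widehat Y$. The key point is then to compute the Banach envelopes of $\F_p(\Rea)$ and $\F_p(\Rea^2)$. Since $\Rea$ and $\Rea^2$ are already metric spaces, the defining metric satisfies the triangle inequality, so every chain in \eqref{MetricEnvelope} has length at least $\rho(x,y)$ while the trivial chain attains it; thus $\widetilde\rho=\rho$ and each space is its own metric envelope. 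Applying Proposition~\ref{envelopeoffree}(a) with $q=1$ therefore identifies the Banach envelope of $\F_p(\Rea)$ with $\F(\widetilde{\Rea})=\F(\Rea)\approx L_1$, and the Banach envelope of $\F_p(\Rea^2)$ with $\F(\widetilde{\Rea^2})=\F(\Rea^2)$.

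Combining these observations, an isomorphism $\F_p(\Rea)\approx\F_p(\Rea^2)$ would force an isomorphism $L_1\approx\F(\Rea)\approx\F(\Rea^2)$, contradicting the Naor--Schechtman non-embedding theorem exactly as in the case $p=1$. Hence $\F_p(\Rea)\not\approx\F_p(\Rea^2)$ for all $0<p\le 1$. The only substantial ingredient is the Naor--Schechtman theorem, which supplies the obstruction at the level of Banach envelopes; the remainder is a formal reduction via the envelope calculus of Proposition~\ref{envelopeoffree} together with the elementary remark that a metric space coincides with its own metric envelope.
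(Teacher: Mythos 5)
Your proposal is correct and follows essentially the same route as the paper: the case $p=1$ is the Naor--Schechtman theorem, and the case $0<p<1$ is reduced to it by taking Banach envelopes via Proposition~\ref{envelopeoffree}(a) (this reduction is exactly the content of the paper's Corollary~\ref{SecondC}, whose proof you have simply unfolded, including the observation that a metric space is its own metric envelope).
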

\begin{proof} The case $p=1$ was proved  in \cite{NaorSchechtman2007}. The case $0<p<1$ is taken care of by Corollary~\ref{SecondC}.
\end{proof} 
 
\section{Lipschitz free $p$-spaces over ultrametric spaces}\label{ultrametricSec}

\noindent The spaces $\F_{p}(\MM)$ over quasimetric (or even metric) spaces provide a new class of quasi-Banach spaces that  in general  are difficult to identify. The point of this section is to see that by imposing a stronger condition on  $\MM$, namely being ultrametric, we can recognize the Lipschitz free $p$-space over $\MM$. 

Recall that a distance $d$ on a set $\MM$ is called an \textit{ultrametric} provided that in place of the triangle inequality, $d$ satisfies the stronger condition 
\[
d(x,z)\le \max\{d(x,y), d(y,z)\},  \quad x,y,z\in\MM.
\]
Note that ultrametrics can be characterized as metrics $d$ such that $d^{\, p}$ is a metric for every
$p>0$.  Indeed, if $(\MM,d^{\, p})$ is a metric space for $p\in A$, and the set $A\subseteq \Rea$ is unbounded, then
\[
d(x,z)\le \left(d^{\, p}(x,y)+d^{\, p}(y,z)\right)^{1/p}, \quad x,y,z\in \MM,\, p\in A.
\]
Letting $p$ tend to infinity we get $d(x,z)\le \max\{ d(x,y),d(y,z)\}$. The converse implication is clear.

Before proceeding, let us digress a bit with the help of an example.  Let $(\MM,\le)$ be a totally ordered set and $\lambda=(\lambda_x)_{x\in \MM}$ a non-decreasing
family of positive numbers (it could also be non-increasing, in which case we would consider the reverse order on $\MM$). Equipped with $d_\lambda\colon\MM\times\MM\to[0,\infty)$ defined by
\[
d_\lambda(x,y)=\begin{cases}\lambda_{\max\{x,y\}} &\text{ if }\; x\not=y\\
0 &\text{ if }x=y,
\end{cases}
\]
$\MM$ is an ultrametric space. Since every set can be equipped with a total order, if we put $\lambda_x=1$ for all $x\in\MM$, we infer that  the $\{0,1\}$-metric on  $\MM$ and the ultrametric  $d_\lambda$ coincide.
 Thus the following theorem, which extends  to the case $p<1$ a result  from \cite{CuthDoucha2016}, also extends Theorem~\ref{discreteA} in the separable case.

\begin{Theorem}\label{MC2018} Let $(\MM,d)$ be an infinite separable pointed ultrametric space. Then $\F_p(\MM,d)\approx \ell_p$ for every $0<p\le 1$.
\end{Theorem}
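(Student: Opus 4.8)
The plan is to produce an explicit $p$-norming set for $\F_{p}(\MM)$ made of ``tree--edge'' molecules and to match it with the canonical basis of $\ell_{p}$ via Lemma~\ref{lemma:transformation}. By Proposition~\ref{density} we may pass to a countable dense subset, so I assume $\MM$ is countable; separability then guarantees that at each scale $\MM$ splits into at most countably many balls. The crucial preliminary decision is to work with \emph{geometric scales}. For $k\in\Int$ put $r_{k}=2^{-k}$ and let $\PP_{k}$ be the partition of $\MM$ into balls of radius $r_{k}$; since $\rho$ is an ultrametric, ``$\rho(x,y)\le r_{k}$'' is an equivalence relation and the partitions refine as $k$ grows, so the balls form a tree $T$. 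I would then choose coherent representatives $c(B)\in B$ (top--down, so that $c(B)=c(B')$ whenever $c(B)\in B'\subseteq B$, with the representative of the ambient node equal to the origin $0$), and for every $B\in\PP_{k}$ whose parent $\widehat B\in\PP_{k-1}$ satisfies $c(B)\ne c(\widehat B)$ I set
\[
u_{B}=\frac{\delta(c(B))-\delta(c(\widehat B))}{\rho(c(B),c(\widehat B))}\in\AAA(\MM),
\]
collecting these into a set $\AAA_{0}$. After contracting the levels at which the representative is unchanged, the molecules $u_{B}$ become the edges of a tree on the representative points; as the point evaluations are linearly independent, so is the countable family $\AAA_{0}$.

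The heart of the argument is to show that $\AAA_{0}$ is $p$-norming through Lemma~\ref{ReductionLemma}, starting from the isometrically $p$-norming set $\AAA(\MM)$ of Corollary~\ref{pNormingset}. Given $x\ne y$ with $\rho(x,y)\in(r_{k+1},r_{k}]$, their join in $T$ is the common $\PP_{k}$-ball $J$, and telescoping $\delta(x)-\delta(y)$ along the two branches of $T$ running from $x$ and from $y$ up to $c(J)$ expresses $(\delta(x)-\delta(y))/\rho(x,y)$ as a finite combination $\sum_{B}\lambda_{B}\,u_{B}$ with $\lambda_{B}=\pm\,\rho(c(B),c(\widehat B))/\rho(x,y)$. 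If $B$ sits at level $j\ge k+1$, then $c(B)$ and $c(\widehat B)$ lie in the common ball $\widehat B$ of radius $r_{j-1}$, whence $|\lambda_{B}|\le r_{j-1}/r_{k+1}=2^{\,k+2-j}$; since each branch meets each level at most once and the two branches share no edge below $J$, the levels are distinct and
\[
\sum_{B}|\lambda_{B}|^{p}\le 2^{p+1}\sum_{i\ge 0}2^{-ip}=:C^{p},
\]
a bound independent of $x$ and $y$. This is exactly the hypothesis of Lemma~\ref{ReductionLemma}, so $\AAA_{0}$ is $p$-norming for $\F_{p}(\MM)$.

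To conclude, I observe that the canonical unit vectors $(e_{B})$ form an isometrically $p$-norming set for $\ell_{p}$ over the same countable index set, and that $u_{B}\mapsto e_{B}$ defines a one-to-one linear map on $\spn(\AAA_{0})$ (by the linear independence noted above) carrying $\AAA_{0}$ onto $\{e_{B}\}$. Lemma~\ref{lemma:transformation} then extends it to an isomorphism of $\F_{p}(\MM)$ onto $\ell_{p}$; since $\MM$ is infinite, $\F_{p}(\MM)$ is infinite dimensional by Lemma~\ref{lem:oodim}, so the index set is infinite and $\F_{p}(\MM)\approx\ell_{p}$.

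The step I expect to be the main obstacle is the uniform coefficient bound in the second paragraph, and it is precisely here that the passage to geometric scales is indispensable. A naive spanning tree—for instance one joining each point to its nearest predecessor—can fail badly: for a ``caterpillar'' ultrametric whose consecutive radii decay slowly, the telescoping coefficients are not $\ell_{p}$-summable, and the resulting edge molecules are not even equivalent to the $\ell_{p}$ basis. Quantizing the relevant radii to the grid $r_{k}=2^{-k}$ is exactly what forces the geometric decay $|\lambda_{B}|\lesssim 2^{-(j-k)}$ that renders the reduction uniform. The only remaining bookkeeping concerns the unbounded case, where $T$ has no single root; this causes no difficulty, since the join of any point with the origin still occurs at a finite level, so every $\delta(x)$ is reached from $0$ along a finite branch.
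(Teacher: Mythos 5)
Your proof is correct, and it takes a genuinely different route from the one in the paper. The paper's proof is ``soft'': it first reduces to the snowflaked metric $d^{1/p}$ (using that $d^{\,p}$ is again an ultrametric), then invokes the external result of C\'uth and Doucha that every separable ultrametric space is bi-Lipschitz isomorphic to a Lipschitz retract of a closed subset $\Ss$ of a separable $\Rea$-tree containing all branching points and having length measure zero, identifies $\F_p(\Ss,\eta^{1/p})$ isometrically with $\ell_p(\Ss_{+})$ via the $p$-analogue of Godard's theorem (Proposition~\ref{prop:godardAnalogyMeasureZero}), obtains from Lemma~\ref{lemma:retract} that $\F_p(\MM,d^{1/p})$ is an infinite-dimensional complemented subspace of $\ell_p$, and concludes with Stiles' theorem on complemented subspaces of $\ell_p$. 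Your argument is instead direct and self-contained relative to the toolbox of Sections~\ref{Prelim} and~\ref{Sec4}: the dyadic ball-tree of the ultrametric, the edge molecules $u_B$, the uniform telescoping estimate feeding Lemma~\ref{ReductionLemma}, and the transfer to the unit vector basis via Lemma~\ref{lemma:transformation} and Lemma~\ref{lem:oodim}. What you gain is an explicit isomorphism with a constant depending only on $p$ (namely $\bigl(2^{p+1}/(1-2^{-p})\bigr)^{1/p}$), and independence from the C\'uth--Doucha embedding, from $\Rea$-tree measure theory, and from Stiles' theorem; your observation that quantizing the scales to $2^{-k}$ is what forces geometric decay of the telescoping coefficients (and that a nearest-predecessor spanning tree can fail for a caterpillar with slowly decaying radii) is precisely the crux. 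What the paper gains by its route is the intermediate Proposition~\ref{prop:godardAnalogyMeasureZero}, an isometric identification valid for arbitrary, even nonseparable, measure-zero subsets of $\Rea$-trees, which has interest beyond the ultrametric setting.

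One minor imprecision, which is not a gap: a top-down coherent selection, as you literally describe it, need not make every point eventually the representative of its own ball, so a branch of your tree may carry infinitely many edges and your telescoping expansions may be infinite series rather than finite sums. This is harmless: Lemma~\ref{ReductionLemma} explicitly allows representations $x=\sum_{n=1}^\infty a_n x_n$, and your series converge because $\rho\bigl(c(B_j(x)),x\bigr)\le 2^{-j}\to 0$, so the partial sums converge to $\delta(x)-\delta(c(J))$ in $\F_p(\MM)$. Alternatively, fix an enumeration $\MM=\{x_0=0,x_1,x_2,\dots\}$ and take $c(B)=x_{m(B)}$ with $m(B)=\min\{i\colon x_i\in B\}$; this choice is coherent, gives $c(B)=0$ whenever $0\in B$, and makes every branch finite, after which your argument reads verbatim.
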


The techniques we use to prove this theorem rely on the concepts of  $\Rea$-tree and length measure.  For the convenience of the reader we  include these definitions,  which we borrow from \cite{Godard2010}*{Sect.\ 2}. For more details concerning $\Rea$-trees see for instance \cite{Evans}*{Chapter 3}.

\begin{Definition}\label{Rtree} An \textit{$\Rea$-tree} is a metric space $(\T, d)$ satisfying:
\begin{enumerate}
\item[(i)] For any points $a$ and $b$ in $\T$, there exists a unique isometry $\phi$ from the closed interval $[0, d(a,b)]$ into $\T$ such that $\phi(0)=a$ and $\phi(d(a,b))=b$.
\item[(ii)] Any one-to-one continuous mapping $\varphi\colon [0,1]\to \T$ has the same range as the isometry $\phi$ associated to the points $a=\varphi(0)$ and $b=\varphi(1)$.

\end{enumerate}

\end{Definition}

If $\T$ is an $\Rea$-tree, given any $x$ and $y$ in $\T$ we denote by $\phi_{xy}$ the unique isometry associated to $x$ and $y$ as in Definition~\ref{Rtree}, and write $[x,y]$ for the range of  $\phi_{x,y}$. Such subsets of $\T$ are called \textit{segments}. Moreover, we say that $v\in \T$ is a \emph{branching point} of $\T$ if there are three points $x_1,x_2,x_3\in \T\setminus\{v\}$ such that $[x_i,v]\cap[x_j,v] = \{v\}$ whenever $i,j\in\{1,2,3\}$, $i\neq j$. We say that a subset $A$ of $\T$ is measurable whenever $\phi^{-1}_{xy}(A)$ is Lebesgue-measurable for any $x$ and $y$ in $\T$. If $A$ is measurable and $S$ is a segment $[x,y]$, we write $\lambda_{S}(A)$ for $\lambda(\phi_{xy}^{-1}(A))$, where $\lambda$ is the Lebesgue measure on $\Rea$. We denote by $\R$ the set of all subsets of $\T$ that can be written as a finite union of disjoint segments. For $R=\cup_{k=1}^{n}S_{k}$ (with disjoint $S_{k}$) in $\R$, we put
\[
\lambda_{R}(A)=\sum_{k=1}^{n}\lambda_{S_{k}}(A).
\]
Now,
\[
\lambda_{\T}(A)=\sup_{R\in \R}\lambda_{R}(A)
\]
defines a measure on the $\sigma$-algebra of $\T$-measurable sets called the \emph{length measure}. Note that this is nothing but the $1$-dimensional Hausdorff measure (multiplied by the constant $2$).

Suppose $(\Ss,d)$ is a closed subset of an $\Rea$-tree $\T$ with a base point $0\in\Ss$. For $s\in\Ss$ we put
\[L_\Ss(s):=\inf_{x\in [0,s)\cap {\Ss}} d(s,x).\]
If $L_\Ss(s) > 0$, we denote by $\sigma_\Ss(s)$ the unique point from $[0,s)\cap\Ss$ with $d(s,\sigma_\Ss(s)) = L(s)$. Finally, we put 
\[\Ss_{+}:=\{s\in\Ss\colon L_\Ss(s) > 0\}.\]

\begin{Lemma}\label{l:closedSubsetInTree}Let $(\Ss,d)$ be a closed subset of an $\Rea$-tree $\T$ with a point $0\in\Ss$. Let $\Ss$ have  length measure zero. Then for all $y\in \Ss$ and all $x\in [0,y]\cap \Ss$,
 \[
 d(x,y) = \sum_{z\in (x,y]\cap \Ss_{+}} L_\Ss(z).
 \]
\end{Lemma}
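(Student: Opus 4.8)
The plan is to collapse everything onto a single geodesic and then read off the identity from the gap structure of a measure-zero closed subset of an interval. Since $x\in[0,y]\cap\Ss$, both $x$ and $y$ lie on the segment $[0,y]$, which by Definition~\ref{Rtree}(i) is the isometric image of $[0,d(0,y)]$ under $\phi_{0y}$. First I would transport the data to this interval: set $t_x=d(0,x)$, $t_y=d(0,y)$, so $0\le t_x\le t_y$, and $K=\phi_{0y}^{-1}(\Ss\cap[0,y])$. As $\Ss$ is closed and $\phi_{0y}$ is an isometry, $K$ is closed in $[0,d(0,y)]$; and since the single segment $[0,y]$ belongs to $\R$, we get $\lambda_{[0,y]}(\Ss)\le\lambda_\T(\Ss)=0$, so $K$ has Lebesgue measure zero. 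Note $t_x,t_y\in K$ because $x,y\in\Ss$.

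Next I would analyse the gaps. The set $[t_x,t_y]\setminus K$ is relatively open and, since $t_x,t_y\in K$, is an open subset of $(t_x,t_y)$; hence it is a countable disjoint union $\bigcup_n(a_n,b_n)$ of maximal open intervals with $a_n,b_n\in K$. Because $K$ has measure zero, the total gap length is $\sum_n(b_n-a_n)=\lambda([t_x,t_y])=t_y-t_x=d(x,y)$. It then remains to match each gap with a point of $(x,y]\cap\Ss_+$ carrying the same length.

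The heart of the argument is this matching. Given a gap $(a_n,b_n)$, put $z_n=\phi_{0y}(b_n)$ and $w_n=\phi_{0y}(a_n)$, both in $\Ss$. Since $[0,z_n]\subseteq[0,y]$, every point of $\Ss\cap[0,z_n)$ is parametrized, and $d(z_n,\cdot)$ decreases as the parameter increases toward $b_n$; as $(a_n,b_n)\cap K=\emptyset$ and $a_n\in K$, the nearest such point is $w_n$. Thus $L_\Ss(z_n)=b_n-a_n>0$, so $z_n\in\Ss_+$ with $\sigma_\Ss(z_n)=w_n$, and $z_n\in(x,y]$ because $t_x<b_n\le t_y$. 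Conversely, any $z\in(x,y]\cap\Ss_+$ yields the interval $(a,b)$ with $b=\phi_{0y}^{-1}(z)\in(t_x,t_y]$ and $a=\phi_{0y}^{-1}(\sigma_\Ss(z))$, which has empty intersection with $K$ and endpoints in $K$, hence is one of the gaps. Combining the resulting bijection with $L_\Ss(z_n)=b_n-a_n$ gives $\sum_{z\in(x,y]\cap\Ss_+}L_\Ss(z)=\sum_n(b_n-a_n)=d(x,y)$.

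I expect the main obstacle to be the converse direction, namely checking that the lower endpoint $\sigma_\Ss(z)$ of the gap attached to a point $z\in(x,y]$ cannot slip strictly below $x$, for otherwise the gap would stick out of $[t_x,t_y]$ and its length would overcount. This is where the hypothesis $x\in\Ss$ is decisive: if the open segment $(\sigma_\Ss(z),z)$ contained $x$, then $x$ would be a point of $\Ss$ on $[0,z)$ strictly closer to $z$ than $\sigma_\Ss(z)$, contradicting the minimality defining $\sigma_\Ss(z)$. Hence $a\ge t_x$, the gap lies inside $[t_x,t_y]$, and no mass is lost or double-counted. The length-measure-zero hypothesis enters only once, to equate the total gap length with $d(x,y)$.
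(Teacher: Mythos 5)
Your proof is correct and follows essentially the same route as the paper's: transport everything to the interval $[0,d(0,y)]$ via the geodesic parametrization $\phi_{0y}$, decompose the complement of the (closed, measure-zero) image of $\Ss$ into maximal open gaps whose total length is $d(x,y)$, and biject those gaps with the points of $(x,y]\cap\Ss_{+}$ via their right endpoints. If anything, you are slightly more explicit than the paper on the step where $x\in\Ss$ forces $\sigma_\Ss(z)\ge x$, so that no gap protrudes below $t_x$ — a detail the paper's proof passes over silently.
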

\begin{proof} Using the transformation $\phi_{0,y}$, we can assume without loss of generality that $x,y\in\Rea$, $0\le x \le y$ and $\Ss\subseteq[0,y]$. Then, the subset $(x,y)\setminus \Ss$ of $\Rea$ is open and, so, it can be expressed as $\cup_{i\in I} (a_i,b_i)$, where the intervals are disjoint. Since $\Ss$ has measure zero, we have $y-x=\sum_{i\in I} (b_i-a_i)$. It is clear that $b_i\in \Ss_{+}$ and $\sigma_\Ss(b_i)=a_i$ for every $i\in I$. Thus, it suffices to see that the map $b\colon I \to \Ss_{+}$, $i\mapsto b_i$ is onto. Given $s\in \Ss_{+}$, since $(\sigma_\Ss(s),s)\cap\Ss=\emptyset$, there exists $i\in I$ such that
$(\sigma_\Ss(s),s)\subseteq(a_i,b_i)$. Taking into account that neither $\sigma_\Ss(s)$ nor $s$ belong to $(a_i,b_i)$, we infer that $a_i=\sigma_\Ss(s)$ and $b_i=s$.
\end{proof}
In the case when $p=1$ the following result was proved by Godard (see \cite{Godard2010}*{Proposition 2.3}). Below we give an alternative proof which works for every $0<p\leq 1$ and for not necessarily separable $\Rea$-trees $\T$.
\begin{Proposition}
\label{prop:godardAnalogyMeasureZero}Let $(\Ss,d)$ be a closed subset of an $\Rea$-tree $\T$ such that $\Ss$ contains all the branching points of $\T$ and  has length measure zero. Then $\F_p(\Ss,d^{1/p})\approx \ell_p(\Ss_{+})$ isometrically. To be precise, the map
\[
T(\delta(s)) := \sum_{x\in (0,s]\cap \Ss_{+}} (L_\Ss(x))^{1/p}\ee_x,\qquad s\in \Ss,
\]
extends to a linear isometry between $\F_p(\Ss,d^{1/p})$ and $\ell_p(\Ss_{+})$.
\end{Proposition}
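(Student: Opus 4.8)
The plan is to single out a concrete isometrically $p$-norming subset of $\F_p(\Ss,d^{1/p})$ on which the prescribed map $T$ acts transparently, and then to feed it into the transference machinery of Lemma~\ref{lemma:transformation}. Regard $\Ss$ as ordered by the tree order rooted at $0$ (so $x\le s$ means $x\in[0,s]$), and for $s\in\Ss_{+}$ abbreviate the \emph{edge molecule}
\[
m_s:=\frac{\delta(s)-\delta(\sigma_\Ss(s))}{(L_\Ss(s))^{1/p}},
\]
which lies in the set $\AAA(\Ss)$ of Corollary~\ref{pNormingset} since $\rho(s,\sigma_\Ss(s))^p=d(s,\sigma_\Ss(s))=L_\Ss(s)$. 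Put $\AAA_0=\{m_s\colon s\in\Ss_{+}\}$. Because $(\sigma_\Ss(s),s)\cap\Ss=\emptyset$, the defining formula yields $T(\delta(s))-T(\delta(\sigma_\Ss(s)))=(L_\Ss(s))^{1/p}\ee_s$, so $T(m_s)=\ee_s$ and $T(\AAA_0)=\{\ee_s\colon s\in\Ss_{+}\}$ is exactly the canonical isometrically $p$-norming set of $\ell_p(\Ss_{+})$.

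The heart of the matter is a \emph{fundamental theorem of calculus along a segment}: for $x\le s$ in $\Ss$,
\[
\delta(s)-\delta(x)=\sum_{z\in(x,s]\cap\Ss_{+}}(L_\Ss(z))^{1/p}\,m_z,
\]
the series converging in $\F_p(\Ss)$. I would prove this directly in norm, sidestepping any appeal to duality (which is delicate in the non-locally-convex setting). Parametrize $[x,s]$ by $\phi=\phi_{x,s}$ and write $(x,s)\setminus\Ss$ as a countable disjoint union $\bigcup_i\phi((u_i,v_i))$; as in the proof of Lemma~\ref{l:closedSubsetInTree}, $b_i:=\phi(v_i)\in\Ss_{+}$, $a_i:=\phi(u_i)=\sigma_\Ss(b_i)$, and $\sum_i(v_i-u_i)=d(x,s)$ because $\Ss$ has length measure zero. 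Removing the $n$ longest gaps from $[0,d(x,s)]$ leaves finitely many closed parameter intervals $[\alpha_j,\beta_j]$ with endpoints in $\Ss$, and a purely finite (hence exact) telescoping in $\PP(\Ss)$ gives
\[
\delta(s)-\delta(x)-\sum_{i\le n}\bigl(\delta(b_i)-\delta(a_i)\bigr)=\sum_{j}\bigl(\delta(\phi(\beta_j))-\delta(\phi(\alpha_j))\bigr).
\]
Since $\delta$ is an isometry (Theorem~\ref{part-case}(a)) and the $p$-norm is $p$-subadditive, the right-hand side has $p$-norm at most $\sum_j(\beta_j-\alpha_j)=d(x,s)-\sum_{i\le n}(v_i-u_i)\to0$, which establishes the identity.

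Granting this, every molecule of $\AAA(\Ss)$ decomposes over $\AAA_0$ with unit $\ell_p$-control. For $s\ne t$ let $w$ be the median of $\{0,s,t\}$; it lies in $\Ss$ because it is either $0$, or one of $s,t$, or else a genuine branching point of $\T$ (the three points $0,s,t$ point in distinct directions from $w$), and $\Ss$ contains all branching points. Applying the telescoping identity to $w\le s$ and to $w\le t$ and subtracting, using that $(w,s]\cap\Ss_{+}$ and $(w,t]\cap\Ss_{+}$ are disjoint, gives
\[
\frac{\delta(s)-\delta(t)}{\rho(s,t)}=\sum_{z\in(w,s]\cap\Ss_{+}}a_z\,m_z-\sum_{z\in(w,t]\cap\Ss_{+}}a_z\,m_z,\qquad a_z=\frac{(L_\Ss(z))^{1/p}}{\rho(s,t)},
\]
whose coefficient sequence has $p$-norm $(d(w,s)+d(w,t))/d(s,t)=1$ by Lemma~\ref{l:closedSubsetInTree} and $d(s,t)=d(s,w)+d(w,t)$. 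Thus each element of $\AAA(\Ss)$ is an $\ell_p$-combination of members of $\AAA_0$ of coefficient $p$-norm at most $1$, so Lemma~\ref{ReductionLemma} (with $C=1$ applied to the isometrically $p$-norming set $\AAA(\Ss)$) shows $\AAA_0$ is isometrically $p$-norming for $\F_p(\Ss)$. As $T$ carries $\AAA_0$ bijectively onto the isometrically $p$-norming set $\{\ee_s\}$ of $\ell_p(\Ss_{+})$ and is one-to-one on $\spn(\AAA_0)$ (the images are linearly independent), Lemma~\ref{lemma:transformation} upgrades $T$ to an onto isometry, which agrees with the stated formula on every $\delta(s)$ by the telescoping identity with $x=0$.

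The main obstacle is precisely the norm-convergent telescoping identity; the rest is bookkeeping with the tree order and the two norming lemmas. The decisive leverage is that length-measure zero forces the total gap length to exhaust $d(x,s)$, so the ``continuous remainder'' $\sum_j(\delta(\phi(\beta_j))-\delta(\phi(\alpha_j)))$ is controlled by a parameter length tending to $0$ — an estimate that is clean exactly because $\delta$ is an isometry and the norm is $p$-subadditive.
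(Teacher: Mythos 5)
Your proof is correct and takes essentially the same route as the paper's: the same key telescoping identity $\delta(s)-\delta(x)=\sum_{z\in(x,s]\cap\Ss_{+}}(L_\Ss(z))^{1/p}m_z$, established by the same mechanism (exact finite telescoping, remainder controlled by $p$-subadditivity, the isometric embedding $\delta$, and length measure zero via Lemma~\ref{l:closedSubsetInTree}), followed by reduction to the edge molecules and an appeal to Lemma~\ref{lemma:transformation}. The only difference is presentational: you handle a general molecule in one stroke through the median of $\{0,s,t\}$ with coefficient $p$-norm exactly $1$ and invoke Lemma~\ref{ReductionLemma}, whereas the paper splits into three cases and uses the closed $p$-convex hull, its Case 3 being precisely your median (branching point) argument.
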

\begin{proof}
Without loss of generality we assume that $0\in\Ss$, and for simplicity, for each $s\in\Ss_{+}$ we denote $\sigma_\Ss(s)$ by $s^\#$.
For every $y\in\Ss$ and $x\in[0,y]\cap \Ss$ we have
\begin{equation}\label{eq:pathDelta}
\delta(y) - \delta(x) = \sum_{z\in (x,y]\cap \Ss_{+}} \big(\delta(z) - \delta(z^\#)\big).
\end{equation}
To see this,  if we  consider in $[x,y]$ the total order induced by the isometry $\phi_{x,y}$,
by Lemma \ref{l:closedSubsetInTree} for any $\varepsilon>0$ we can get $z_1<z_2<\cdots<z_n\in (x,y]\cap \Ss_{+}$ with
$
D:= \left|\sum_{i=1}^n d(z_i,z_i^\#) - d(x,y)\right|\leq \epsilon.
$
With the convention that $z_0=x$,
\begin{align*}
\Big\Vert\delta(y) - \delta(x) &- \sum_{i=1}^n (\delta(z_i) - \delta(z_i^\#))\Big\Vert^p  \\
&\le \|\delta(y) - \delta(z_n)\|^p + \sum_{i=1}^n \|\delta(z_i^\#) - \delta(z_{i-1})\|^p \\
&\le d(y,z_n)+\sum_{i=1}^nd(z_i^\#, z_{i-1})
=D \leq \varepsilon,
\end{align*}
where, in the last equality we used the fact that $z_{i-1}\le z_i^\#$ for every $i=1,\dots,n$.
For $s\in \Ss_{+}$, put
\[
\aaa_s:= \frac{\delta(s) - \delta(s^\#)}{d^{1/p}(s,s^\#)},
\]
and set $
\AAA:=\overline{\cop} \left\{\aaa_s\colon s\in \Ss_{+}\right\}.
$
Note that $T(\aaa_s) = \ee_s$  since $(s^\#,s)\cap\Ss=\emptyset$. By Lemma~\ref{lemma:transformation}, in order to show that $T$ extends to a surjective isometry it suffices to show that for all $x,y\in \Ss$,
\begin{equation}\label{diseq}
a_{x,y}:=\frac{\delta(x) - \delta(y)}{d^{1/p}(x,y)} \in \AAA,
\end{equation} 
 To that end, given $x,y\in \Ss$ there are three cases to take into account:

\noindent$\bullet$ {\sc Case 1:}  If $x\in[0,y]$, then by the identity \eqref{eq:pathDelta}, 
\[
a_{x,y}  = \sum_{z\in (x,y]\cap \Ss_{+}} \frac{(L_{\Ss}(z))^{1/p}}{d^{1/p}(x,y)}\, \aaa_z,
\]
which, by Lemma~\ref{l:closedSubsetInTree}, is a $p$-convex combination of $\aaa_z$, and so $a_{x,y}\in \AAA$.

\noindent $\bullet$ {\sc Case 2:} If $y\in[0,x]$, switching the roles of $x$ and $y$ in the previous case we easily get $a_{x,y}\in \AAA$.

\noindent $\bullet$ {\sc Case 3:} If neither Case 1 nor Case 2 occurs, there exists a branching point $c\in[x,y]$ with $[0,x]\cap [0,y] = [0,c]$. Then we can write
\[
a_{x,y}=\lambda a_{y,c}+\mu a_{c,x}, \quad \lambda=\frac{d^{1/p}(y,c) }{d^{1/p}(x,y) }, \quad
\mu= \frac{d^{1/p}(c,x)}{d^{1/p}(x,y) }.
\]
From the two previous  cases we have $a_{c,y}$, $a_{c,x}\in \AAA$, and since 
\[
\lambda^p+\mu^p =\frac{d(x,c)+d(c,y)}{d(x,y)}=1,
\]
we conclude that $a_{x,y}\in \AAA$. Hence, \eqref{diseq} is fulfilled.
\end{proof}

Since the real line is a trivial example of an $\Rea$-tree we obtain:
\begin{Corollary}\label{cor:measureZero}
\label{AnsoTh}  Let $\MM$ be an infinite subset of $\Rea$ and  $0<p\le 1$.  If the closure of  $\MM$  has measure zero then
$
\F_p(\MM,|\cdot|^{1/p}) \approx \ell_p$ isometrically.
In particular, the result holds if $\MM$ is the range of a monotone sequence of real numbers.
\end{Corollary}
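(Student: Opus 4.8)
The plan is to realize $\MM$ inside the simplest possible $\Rea$-tree, namely $\Rea$ itself, and then to feed everything into Proposition~\ref{prop:godardAnalogyMeasureZero} together with the density result of Proposition~\ref{density}. First I would observe that $\Rea$, viewed as an $\Rea$-tree, has \emph{no} branching points: from any point one can move in only two directions, so the defining configuration of three segments meeting only at $v$ cannot occur. Hence the hypothesis ``$\Ss$ contains all the branching points of $\T$'' in Proposition~\ref{prop:godardAnalogyMeasureZero} is vacuously satisfied for every closed $\Ss\subseteq\Rea$, and this is precisely what lets us apply that proposition on the line.

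Next I would set $\Ss=\overline{\MM}$, the closure of $\MM$ in $\Rea$, keeping the base point of $\MM$ (which lies in $\MM\subseteq\Ss$). On $\Rea$ the length measure agrees, up to a constant, with Lebesgue measure, so the assumption that $\overline{\MM}$ has measure zero says exactly that $\Ss$ has length measure zero. Thus Proposition~\ref{prop:godardAnalogyMeasureZero} yields an isometric isomorphism $\F_p(\overline{\MM},|\cdot|^{1/p})\approx\ell_p(\Ss_+)$. To return to $\MM$ itself I would invoke Proposition~\ref{density}: $\MM$ is dense in $\overline{\MM}$ for the same $p$-metric $|\cdot|^{1/p}$, so the canonical map is an isometry $\F_p(\MM,|\cdot|^{1/p})\approx\F_p(\overline{\MM},|\cdot|^{1/p})$. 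Composing gives $\F_p(\MM,|\cdot|^{1/p})\approx\ell_p(\Ss_+)$ isometrically.

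It then remains to identify $\ell_p(\Ss_+)$ with $\ell_p$, i.e.\ to show that $\Ss_+$ is countably infinite, and I expect this bookkeeping to be the only delicate part. For countability: each $s\in\Ss_+$ carries its point $s^{\#}$ together with a maximal open gap between $s^{\#}$ and $s$ disjoint from $\Ss$; distinct elements of $\Ss_+$ produce distinct gaps, since the member of $\Ss_+$ is the endpoint lying farther from the base point, and the base point itself belongs to $\Ss$ and hence to no gap. As $\Rea\setminus\Ss$ is open it has at most countably many components, so $\Ss_+$ injects into a countable set. For infinitude I would simply appeal to Lemma~\ref{lem:oodim}: since $\MM$ is infinite, $\F_p(\MM)$ is infinite-dimensional, and its isometric copy $\ell_p(\Ss_+)$ then forces $\Ss_+$ to be infinite. (Alternatively, a direct argument via Lemma~\ref{l:closedSubsetInTree} works: were $\Ss_+$ finite, the identity $d(0,y)=\sum_{z\in(0,y]\cap\Ss_+}L_\Ss(z)$ would make $d(0,\cdot)$ take only finitely many values on the infinite set $\Ss$, a contradiction.) Consequently $\ell_p(\Ss_+)\cong\ell_p$ isometrically, and the main statement follows.

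Finally, for the ``in particular'' clause I would note that the range of a monotone sequence is countable and its closure adds at most one limit point, so $\overline{\MM}$ is countable and therefore of Lebesgue measure zero; the hypothesis of the corollary is met and the conclusion applies. The passage from the $\Rea$-tree machinery to the line and the reduction to the closure are essentially formal once Propositions~\ref{prop:godardAnalogyMeasureZero} and~\ref{density} are in hand, so the heart of the proof is the structural analysis of $\Ss_+$.
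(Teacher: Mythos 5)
Your proof is correct and follows essentially the same route as the paper: the paper obtains this corollary directly from Proposition~\ref{prop:godardAnalogyMeasureZero} by viewing $\Rea$ as a trivial (branching-point-free) $\Rea$-tree, exactly as you do. The details you supply --- passing to $\overline{\MM}$ via Proposition~\ref{density}, and checking that $\Ss_{+}$ is countably infinite so that $\ell_p(\Ss_{+})$ is isometric to $\ell_p$ --- are precisely the steps the paper leaves implicit.
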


\begin{proof}[Proof of Theorem~\ref{MC2018}] Since $d^{\, p}$ is also an ultrametric whenever $d$ is, we need only show that $\F_p(\MM,d^{1/p})\approx \ell_p$. By \cite{CuthDoucha2016}*{Proposition 12}, there exists a closed subset $\Ss$  of  a separable $\Rea$-tree $\T$  containing all its branching points in such a way that $\Ss$ has length measure zero and $(\MM,d)$ is bi-Lipschitz isomorphic to a Lipschitz retract of $\Ss$. Denoting the metric on $\Ss$ by $\eta$, we have that $(\MM,d^{1/p})$ is bi-Lipschitz isomorphic to a Lipschitz retract of $(\Ss,\eta^{1/p})$. By Proposition~\ref{prop:godardAnalogyMeasureZero}, Lemma~\ref{lem:oodim}, and Corollary~\ref{cor:sep}, $\F_p(\Ss,\eta^{1/p})\approx \ell_p$ hence, by Lemma~\ref{lem:oodim} and Lemma~\ref{lemma:retract}, $\F_p(\MM,d^{1/p})$ is isomorphic to an infinite-dimensional complemented subspace of $\ell_{p}$. Since  every infinite-dimensional complemented subspace of $\ell_p$ is isomorphic to $\ell_{p}$ by a classical result of Stiles  \cite{Stiles1972}, we infer that $\F_p((\MM,d^{1/p}))\approx \ell_p$, and the proof is over.
\end{proof}

Theorem~\ref{LpasFreespace} and Corollary~\ref{AnsoTh} allow us to identify the free $p$-spaces over some subsets of the real line equipped with the
``anti-snowflaking'' quasimetric $|\cdot|^{1/p}$. However, identifying  the free $p$-space over subsets of $\Rea$ equipped with the Euclidean distance seems to be a more challenging task.  For the time being, let us just mention that since the Banach envelope of $\F_p(I)$ is $L_1(I)$ for any interval $I$ with the Euclidean distance (to see this, apply Proposition~\ref{envelopeoffree}(b) and  Theorem~\ref{LpasFreespace} for $p=1$), the spaces $\F_p(I)$ constitute   a new class of $p$-Banach spaces.  

\section{Linearizations of Lipschitz embeddings}\label{structureSec}

\noindent Lipschitz free $p$-spaces over quasimetric spaces  constitute a nice family of new $p$-Banach spaces which are easy to define but whose geometry seems to be difficult to understand. To carry out this task  sucessfuly one hopes to be able to count on ``natural''  structural results involving free $p$-spaces over subsets of $\MM$.  In this section we analyse this premise and confirm an unfortunate recurrent pattern in quasi-Banach spaces: the lack of tools can be an important stumbling block in the development of the  nonlinear theory. However, as we will also see, not everything is lost and we still can develop  specific methods that permit to shed light onto the structure of $\F_{p}(\MM)$.

 If $\MM$ is a pointed $p$-metric space and  $\NN$ is a subset of $\MM$ containing $0$, the
linearization process of Lemma~\ref{key} applies in particular to the canonical injection $\jmath\colon \NN\to \MM$.
If $p=1$,   McShane's theorem \cite{McShane1934} ensures that $L_\jmath\colon \F(\NN)\to \F(\MM)$ is an isometric embedding.
Thus $\F(\NN)$ can be naturally identified with a subspace of $\F(\MM).$  However, if $p<1$ this argument crashes.    
In the case when $p=1$, $\F(\NN)$ is isometric to a subspace of $\F(\MM)$ and so the study of  Lipschitz free spaces over subsets is a powerful tool.  We start by exhibiting that this argument breaks down when $p<1$, settling a problem raised in \cite{AlbiacKalton2009}.

\begin{Theorem}
For each $0<p<1$ and $p\leq q\leq 1$ there is a $q$-metric space  $(\MM,\rho)$ and a subset $\NN\subseteq\MM$ such that the inclusion map $\jmath\colon\NN\to\MM$ induces a non-isometric isomorphic embedding $L_\jmath:\F_p(\mathcal N)\to \F_p(\mathcal M)$ with $\Vert L_\jmath^{-1}\Vert \ge 2^{1/q}$.
\end{Theorem}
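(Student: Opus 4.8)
The plan is to realize the pathology on a single \emph{infinite star} and to detect it through a family of finitely supported molecules whose norms degrade by a factor approaching $2^{1/q}$. Concretely, fix $p\le q\le 1$ and let $\MM=\{u\}\cup\{a_i:i\ge 0\}$ be the countable star with center $u$, equipped with the symmetric map $\rho$ given by $\rho(u,a_i)=1$ for all $i\ge 0$ and $\rho(a_i,a_j)=2^{1/q}$ for $i\ne j$; take $a_0=0$ as base point and set $\NN=\MM\setminus\{u\}=\{a_i:i\ge 0\}$. Since $\rho^{\,q}$ is exactly the graph metric of the star with unit edges (center-to-leaf $1$, leaf-to-leaf $2$), it is a genuine metric, so $\rho$ is a $q$-metric and hence a $p$-metric. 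The point of this geometry is that $\NN$ carries the constant distance $2^{1/q}$, i.e.\ it is the $\{0,1\}$-metric space rescaled by $2^{1/q}$, whereas in $\MM$ the new point $u$ provides short relays of $\rho$-length $1$.

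For the quantitative estimate I would test $L_\jmath$ on the molecules $\mu_n=\sum_{i=1}^n\delta(a_i)$, $n\in\Nat$. For the lower bound on $\|\mu_n\|_{\F_p(\NN)}$ I would feed into the defining formula \eqref{definition} the map $f\colon\NN\to(\Rea^n,\|\cdot\|_{(p)})$ given by $f(a_i)=2^{1/q}\ee_i$ for $1\le i\le n$ and $f(x)=0$ otherwise, where $\|\cdot\|_{(p)}$ is the $p$-norm furnished by Proposition~\ref{prop:pbody}; its properties (a) and (b) yield $\|f(a_i)-f(a_j)\|_{(p)}\le 2^{1/q}=\rho(a_i,a_j)$ and $\|f(a_i)\|_{(p)}=2^{1/q}=\rho(a_i,0)$, so $f$ is $1$-Lipschitz, whence $\|\mu_n\|_{\F_p(\NN)}\ge\big\|\sum_{i=1}^n 2^{1/q}\ee_i\big\|_{(p)}=2^{1/q}n^{1/p}$. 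For the upper bound on the image norm I would route everything through the center: from the identity $\mu_n=\sum_{i=1}^n(\delta(a_i)-\delta(u))+n\,\delta(u)$ and the intrinsic formula of Corollary~\ref{alternative-norm}, noting that the atoms $\delta(a_i)-\delta(u)$ and $\delta(u)=\delta(u)-\delta(0)$ all have $\rho$-length $1$ (so the coefficients are $n$ ones and a single $n$), I obtain $\|L_\jmath\mu_n\|_{\F_p(\MM)}\le(n+n^p)^{1/p}$. Combining the two bounds,
\[
\frac{\|\mu_n\|_{\F_p(\NN)}}{\|L_\jmath\mu_n\|_{\F_p(\MM)}}\ge 2^{1/q}\Big(\frac{n}{n+n^p}\Big)^{1/p}=2^{1/q}\big(1+n^{p-1}\big)^{-1/p}\xrightarrow[n\to\infty]{}2^{1/q},
\]
so that $\|L_\jmath^{-1}\|\ge 2^{1/q}$.

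Finally, to confirm that $L_\jmath$ is a genuine isomorphic embedding (and not merely norm-decreasing) I would exhibit $\NN$ as a Lipschitz retract of $\MM$ through $r\colon\MM\to\NN$ with $r(u)=0$ and $r|_\NN=\Id$; the only nontrivial ratio is $\rho(0,a_i)/\rho(u,a_i)=2^{1/q}$, so $\LipOp(r)=2^{1/q}<\infty$ and Lemma~\ref{lemma:retract} applies, which moreover forces $\|L_\jmath^{-1}\|\le\|L_r\|=\LipOp(r)=2^{1/q}$, giving the exact value $2^{1/q}$. The conceptual hurdle—and the reason no finite or single-molecule example can succeed—is that $\delta$ is isometric, so every difference $\delta(x)-\delta(y)$ is rigid, while any positive combination must pay an unavoidable \emph{mass penalty} at the relay $u$ (the term $n\,\delta(u)$ above). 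The key observation that unlocks the proof is that this penalty grows only like $n^{p}$ and is therefore dwarfed by the true cost $n$ as $n\to\infty$, which is precisely what lets the distortion reach the extremal stretch $2^{1/q}$ of the $q$-triangle relation.
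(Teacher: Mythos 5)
Your proof is correct, and its quantitative core coincides with the paper's: in both arguments $\NN$ is (up to rescaling) the $\{0,1\}$-metric space, $\MM$ adds a single relay point whose distance to the points of $\NN$ is shorter by the extremal factor $2^{-1/q}$ permitted by the $q$-triangle inequality, the lower bound comes from the $p$-convex body of Proposition~\ref{prop:pbody} (the paper channels it through Proposition~\ref{zeroone}; you apply it directly, rescaled), and the upper bound comes from rerouting a uniform positive combination through the relay via Corollary~\ref{alternative-norm}, the decisive point in each case being that the mass penalty at the relay ($n^{p}$ in your normalization, $\epsilon^{p}$ in the paper's) is of lower order than the true cost. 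The one genuine difference is how the isomorphic-embedding part is certified: the paper notes that $\MM$ is bounded and uniformly separated and invokes Theorem~\ref{discreteA}, whereas you exhibit the Lipschitz retraction $r\colon\MM\to\NN$ with $r(u)=0$ and invoke Lemma~\ref{lemma:retract}. Your route buys a little more: from $L_r\circ L_\jmath=\Id_{\F_p(\NN)}$ and $\Vert L_r\Vert=\LipOp(r)=2^{1/q}$ (Lemma~\ref{key}) you obtain the matching upper bound $\Vert L_\jmath^{-1}\Vert\le 2^{1/q}$, so your example computes $\Vert L_\jmath^{-1}\Vert=2^{1/q}$ exactly, a sharpening the paper's proof does not claim. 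The remaining discrepancies (a perfect star, with the relay adjacent to the base point, versus the paper's relay at distance $1$ from the base point; test vectors $\mu_n=\sum_{i=1}^n\delta(a_i)$ with $n\to\infty$ versus the paper's normalized coefficients $a_j=k^{-1/p}$ with $\epsilon\to 0$) are immaterial bookkeeping.
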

\begin{proof}
Let $\NN=\{0\}\cup\Nat$  and $\MM=\{0,z\}\cup\Nat$, where $z\notin\NN$. Define 
$\rho\colon\MM \times\MM\to[0,\infty)$ by $\rho(x,x)=0$  for all $x\in\MM$, $
\rho(j,z)=\rho(z,j)=2^{-1/q}$  for all $ j\in\Nat$, and $\rho(x,y)=1$ otherwise.
It is clear that $(\MM,\rho)$ is a bounded and uniformly separated $q$-metric space. Then, by Theorem~\ref{discreteA}, $L_\jmath$ is an isomorphism.

Given $\epsilon>0$, there is $k\in\Nat$ and a $k$-tuple $(a_j)_{j=1}^k$ such that $a_j\ge 0$ for every $j=1$, \dots, $k$, $\sum_{j=1}^k a_j^p=1$, and 
$\sum_{j=1}^k a_j\le \epsilon$. Indeed,  it suffices to choose $k\geq \varepsilon^{p/(p-1)}$  and put $a_j= k^{-1/p}$ for $1\le j \le k$. On the one hand, since $\rho$ is the $\{0,1\}$-metric on $\NN$, by Proposition~\ref{zeroone}, $
\| \sum_{j=1}^k a_j \, \delta(j) \|_{\F_p(\NN)}=1.$
On the other hand, considering the decomposition
\[
 \sum_{j=1}^k a_j \, \delta(j) = \left(\sum_{j=1}^k a_j\right) \delta(z)+ \sum_{j=1}^k \frac{a_j}{2^{1/q}} \, 
\frac{\delta(j) -\delta(z)}{2^{-1/q}}
\]
and using Corollary~\ref{alternative-norm}, we have
\[
\left\| \sum_{j=1}^k a_j \, \delta(j) \right\|_{\F_p(\MM)}^p\le  \left(\sum_{j=1}^k a_j\right)^p + \sum_{j=1}^k (2^{-1/q}a_j)^p
\le \epsilon^p+2^{-p/q}.
\]
Hence,
\[
\Vert L_\jmath^{-1}\Vert \ge \sup_{\epsilon>0} \frac{1}{ (\epsilon^p+2^{-p/q})^{1/p}}=2^{1/q}.\qedhere
\]
\end{proof}
The following problem seems to be open.
\begin{Question}\label{QA1}
Let $0<p<1$ and $\NN\subseteq \MM$ be two $p$-metric (or metric) spaces in inclusion.  Is the canonical linear map of $\F_p(\NN)$ into $\F_p(\MM)$ an isomorphic embedding?
\end{Question}
 
The answer to Question~\ref{QA1} is positive in some special cases. 

\begin{Proposition} If $\MM$ is a $\{0,1\}$-metric space, 
then the canonical map of $\F_p(\NN)$ into $\F_p(\MM)$ is isometric for every $\NN\subseteq\MM$. 
\end{Proposition}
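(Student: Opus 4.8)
The plan is to realize $\NN$ as a $1$-Lipschitz retract of $\MM$ by means of the crudest possible retraction, and then to upgrade the resulting isomorphic embedding of Lemma~\ref{lemma:retract} to an isometry by observing that both relevant operators are norm-one. First, note that since $\jmath\colon\NN\to\MM$ is an isometric embedding it is $1$-Lipschitz, so Lemma~\ref{key} gives $\Vert L_\jmath\Vert=\LipOp(\jmath)=1$ and hence $\Vert L_\jmath(\mu)\Vert_{\F_p(\MM)}\le\Vert\mu\Vert_{\F_p(\NN)}$ for every $\mu\in\F_p(\NN)$. The entire task therefore reduces to establishing the reverse inequality.

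To that end I would define the retraction $r\colon\MM\to\NN$ by $r(x)=x$ for $x\in\NN$ and $r(w)=0$ for $w\in\MM\setminus\NN$ (recall that $0\in\NN$). The one step requiring genuine verification, and the place where the hypothesis is used, is that $r$ is $1$-Lipschitz: for distinct $x,y\in\MM$ one has $\rho(x,y)=1$, while $\rho(r(x),r(y))\in\{0,1\}$, so $\rho(r(x),r(y))\le\rho(x,y)$ holds in every case—whether both points lie in $\NN$ (then one even has equality), exactly one does, or neither does. This is precisely the mechanism that the $\{0,1\}$-hypothesis exploits: had any distance been strictly smaller than $1$, as in the counterexample preceding Question~\ref{QA1}, collapsing $\MM\setminus\NN$ onto the origin could enlarge distances and $r$ would fail to be $1$-Lipschitz.

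With $r$ in hand, Lemma~\ref{key} produces a linear map $L_r\colon\F_p(\MM)\to\F_p(\NN)$ with $\Vert L_r\Vert=\LipOp(r)\le 1$, and the functoriality of the assignment $f\mapsto L_f$ yields $L_r\circ L_\jmath=L_{r\circ\jmath}=L_{\Id_\NN}=\Id_{\F_p(\NN)}$. Consequently, for every $\mu\in\F_p(\NN)$,
\[
\Vert\mu\Vert_{\F_p(\NN)}=\Vert L_r(L_\jmath(\mu))\Vert_{\F_p(\NN)}\le\Vert L_r\Vert\,\Vert L_\jmath(\mu)\Vert_{\F_p(\MM)}\le\Vert L_\jmath(\mu)\Vert_{\F_p(\MM)}.
\]
Combining this with the opposite inequality from the first paragraph gives $\Vert L_\jmath(\mu)\Vert_{\F_p(\MM)}=\Vert\mu\Vert_{\F_p(\NN)}$, so $L_\jmath$ is an isometry.

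I do not anticipate a serious obstacle here. The only nonroutine point is the verification that the collapsing map $r$ is $1$-Lipschitz, which is immediate once one records that every nonzero distance equals $1$; the remainder is the standard functorial bookkeeping already packaged in Lemmas~\ref{key} and~\ref{lemma:retract}. The single improvement over Lemma~\ref{lemma:retract} is that in this situation \emph{both} $L_\jmath$ and $L_r$ have norm at most $1$, which is exactly what promotes ``isomorphic embedding'' to ``isometry.''
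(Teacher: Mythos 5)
Your proof is correct, but it is packaged differently from the paper's. The paper argues on the side of test functions: it observes that a map from a $\{0,1\}$-metric space into a quasi-Banach space is $1$-Lipschitz if and only if all pairwise differences of its values have norm at most $1$, so every $1$-Lipschitz $f\colon\NN\to X$ with $f(0)=0$ extends by zero to a $1$-Lipschitz map on all of $\MM$; the lower bound $\Vert L_\jmath(\mu)\Vert_{\F_p(\MM)}\ge\Vert\mu\Vert_{\F_p(\NN)}$ then follows directly from the defining formula \eqref{definition} for the norm as a supremum over such maps. You instead work on the side of the spaces: you verify that collapsing $\MM\setminus\NN$ onto the origin is a $1$-Lipschitz retraction $r$, and you extract the same lower bound from the identity $L_r\circ L_\jmath=\Id_{\F_p(\NN)}$ together with $\Vert L_r\Vert\le 1$, as in Lemma~\ref{lemma:retract}. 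The two mechanisms are dual to one another---the paper's extension of $f$ by zero is precisely $f\circ r$ for your retraction $r$---so the essential insight, namely that collapsing onto the base point cannot increase distances when every nonzero distance equals $1$, is identical. What your version buys is a cleanly reusable structural statement: whenever a subset is a $1$-Lipschitz retract and the inclusion induces a norm-one operator, the induced embedding is automatically isometric, which sharpens Lemma~\ref{lemma:retract} in this situation. What the paper's version buys is independence from the functorial machinery of Lemma~\ref{key}: it needs nothing beyond the definition of the norm on $\PP(\NN)$ and $\PP(\MM)$.
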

\begin{proof}
Notice that a map $f$ from a  $\{0,1\}$-metric space $\MM$ into a quasi-Banach space $X$ is $1$-Lipschitz if and only if 
$\Vert f(x)-f(y)\Vert\le 1$  for every $x$, $y\in\MM$. Then, a  $1$-Lipschitz map $f\colon\NN\to X$ with $f(0)=0$ extends by $f(x)=0$ for $x\in \MM\setminus\NN$
to a $1$-Lipschitz map from $\MM$ into $X$. This gives $\Vert L_\jmath(\mu)\Vert_{\F_p(\MM)}\ge \Vert \mu\Vert_{\F_p(\NN)}$ for every $\mu\in\PP(\NN)$.
\end{proof}

\begin{Proposition}Let $\MM$ be a subset of $\Rea$ equipped with the quasimetric 
$\rho(x,y)=|x-y|^{1/p}$ for  $0<p\le 1$.  If the closure of  $\MM$  has measure zero and $\NN\subseteq \MM$ then the canonical mapping $L_\jmath\colon \F_{p}(\NN)\to  \F_{p}(\MM)$ is an isometry.
\end{Proposition}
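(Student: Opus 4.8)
The plan is to reduce to the case where $\NN$ and $\MM$ are closed and then to realize $L_\jmath$ as a block-type isometry between the $\ell_p$-spaces furnished by Proposition~\ref{prop:godardAnalogyMeasureZero}. After translating $\Rea$ (an isometry of $(\Rea,|\cdot|^{1/p})$ that preserves the measure-zero property of closures) I may assume the common base point is $0\in\Rea$. Put $\widehat\NN=\overline\NN$ and $\widehat\MM=\overline\MM$; since $\widehat\NN\subseteq\widehat\MM$ and $\overline\MM$ has measure zero, both are closed, measure-zero subsets of the $\Rea$-tree $\Rea$. By Proposition~\ref{density} the inclusions $\NN\hookrightarrow\widehat\NN$ and $\MM\hookrightarrow\widehat\MM$ induce canonical onto isometries $\F_p(\NN)\to\F_p(\widehat\NN)$ and $\F_p(\MM)\to\F_p(\widehat\MM)$. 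Together with the two copies of $L_\jmath$ these form a commutative square, since every map is induced by an inclusion and hence they agree on the dense subspace spanned by the $\delta(x)$'s. Because the two vertical maps are onto isometries, $L_\jmath\colon\F_p(\NN)\to\F_p(\MM)$ is an isometry if and only if $L_\jmath\colon\F_p(\widehat\NN)\to\F_p(\widehat\MM)$ is, so I may assume from now on that $\NN$ and $\MM$ are closed.

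Next I would apply Proposition~\ref{prop:godardAnalogyMeasureZero} to $\widehat\NN$ and $\widehat\MM$ separately. Since $\Rea$ has no branching points, the hypothesis on branching points is vacuous, and I obtain onto isometries $T_\NN\colon\F_p(\widehat\NN)\to\ell_p(\widehat\NN_+)$ and $T_\MM\colon\F_p(\widehat\MM)\to\ell_p(\widehat\MM_+)$, both given on generators by $\delta(s)\mapsto\sum_{x\in(0,s]\cap\Ss_+}(L_\Ss(x))^{1/p}\ee_x$ for the respective closed set $\Ss$. The heart of the argument is to understand $\Theta:=T_\MM\circ L_\jmath\circ T_\NN^{-1}\colon\ell_p(\widehat\NN_+)\to\ell_p(\widehat\MM_+)$. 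For $s\in\widehat\NN_+$ one has $T_\NN^{-1}(\ee_s)=\aaa_s^{\NN}=(\delta(s)-\delta(s^{\#}))/d^{1/p}(s,s^{\#})$, where $s^{\#}=\sigma_{\widehat\NN}(s)\in[0,s]\cap\widehat\MM$, so subtracting the defining formulas for $T_\MM(\delta(s))$ and $T_\MM(\delta(s^{\#}))$ yields
\[
\Theta(\ee_s)=\frac{1}{d^{1/p}(s,s^{\#})}\sum_{x\in(s^{\#},s]\cap\widehat\MM_+}(L_{\widehat\MM}(x))^{1/p}\ee_x=:u_s .
\]

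The two facts that finish the proof are that each $u_s$ is a unit vector and that the $u_s$ have pairwise disjoint supports. For the norm, Lemma~\ref{l:closedSubsetInTree} applied in $\widehat\MM$ to the pair $s^{\#}\in[0,s]\cap\widehat\MM$ and $s$ gives $\sum_{x\in(s^{\#},s]\cap\widehat\MM_+}L_{\widehat\MM}(x)=d(s^{\#},s)$, whence $\|u_s\|_{\ell_p}^p=d(s^{\#},s)\big/\big(d^{1/p}(s,s^{\#})\big)^{p}=1$. For disjointness, note that for distinct $s,s'\in\widehat\NN_+$ the half-open segments $(s^{\#},s]$ and $((s')^{\#},s']$ are disjoint, because $\sigma_{\widehat\NN}$ returns the nearest point of $\widehat\NN$ toward $0$, so these are exactly the gaps of $\widehat\NN$. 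Thus $\{u_s\colon s\in\widehat\NN_+\}$ is a family of disjointly supported unit vectors in $\ell_p(\widehat\MM_+)$, and since for $0<p\le 1$ such a family is isometrically equivalent to the canonical basis, $\Theta$ carries the basis of $\ell_p(\widehat\NN_+)$ onto it and is therefore an isometric embedding. As $T_\NN$ and $T_\MM$ are onto isometries, $L_\jmath=T_\MM^{-1}\circ\Theta\circ T_\NN$ is an isometry, completing the proof.

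The main obstacle is precisely the bookkeeping in the third paragraph: one must verify that refining the gaps of $\widehat\NN$ by the extra points of $\widehat\MM$ neither changes the norm of an atom (this is guaranteed by the additivity in Lemma~\ref{l:closedSubsetInTree}) nor creates overlaps between the blocks coming from different $\widehat\NN$-atoms. Keeping careful track of the segments $(s^{\#},s]$, including the behaviour at the base point $0$ and on both sides of $0$, is the only genuinely delicate point; everything else is a formal consequence of Propositions~\ref{density} and~\ref{prop:godardAnalogyMeasureZero}.
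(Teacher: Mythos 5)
Your proposal is correct and follows essentially the same route as the paper's own proof: reduce to closed sets via Proposition~\ref{density}, conjugate $L_\jmath$ by the isometries of Proposition~\ref{prop:godardAnalogyMeasureZero}, and use Lemma~\ref{l:closedSubsetInTree} to check that the images of the basis vectors are disjointly supported unit vectors in $\ell_p(\MM_{+})$, hence isometrically equivalent to the unit vector basis. The differences are purely presentational (the explicit commutative square for the reduction to closures and the remarks about the base point), so there is nothing to add.
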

\begin{proof} By Proposition~\ref{density} we can assume that both $\NN$ and $\MM$ are closed. Let 
 $T_\MM\colon\F_p(\MM)  \to \ell_p(\MM_{+})$ and $T_\NN\colon \F_p(\NN) \to \ell_p(\NN_{+})$ be the isomotries provided by Proposition~\ref{prop:godardAnalogyMeasureZero}. If $U=T_\MM\circ L_\jmath \circ T_\NN^{-1}$, where $\jmath\colon\NN\to\MM$ is the inclusion map,  we have
\[
U(\ee_s)=\frac{1}{| s -\sigma_\NN(s)|^{1/p}}\sum_{z\in(\sigma_\NN(s),s]\cap \MM_{+}} L_{\MM}^{1/p}(z) \ee_z, \quad s\in\NN_{+}.
\]
By Lemma~\ref{l:closedSubsetInTree}, $\Vert U(\ee_s)\Vert_p=1$ for every $s\in\NN_{+}$. Pick $s$, $t\in\NN_{+}$ with $s<t$. Since $(\sigma_\NN(s),s)\cap\NN=\emptyset$, we have $s\le\sigma_\NN(t)$. Then, $(\sigma_\NN(s),s]\cap (\sigma_\NN(t),t]=\emptyset$ and so $(U(\ee_s))_{s\in\NN_{+}}$ is a disjointly supported family in $\ell_p(\MM_{+})$. Hence
$(U(\ee_s))_{s\in\NN_{+}}$ is isometrically equivalent to the unit vector basis of $\ell_p(\MM_{+})$, i.e., $U$ is an isometric embedding.
\end{proof}

\begin{Question}
Let $0<p<1$. Can we identify the $p$-metric spaces $\MM$ for which the canonical linear map from $\F_p(\NN)$ into $\F_p(\MM)$ is an isometry  for every $\NN\subseteq \MM$?
\end{Question}

\subsection*{Acknowledgment} 
F. Albiac acknowledges the support of the  Spanish Ministry for Economy and Competitivity Grants MTM2014-53009-P  for \textit{An\'alisis Vectorial, Multilineal y Aplicaciones},  and MTM2016-76808-P for  \textit{Operators, lattices, and structure of Banach spaces}.  J.L. Ansorena  acknowledges the support of the  Spanish Ministry for Economy and Competitivity Grant MTM2014-53009-P for  \textit{An\'alisis Vectorial, Multilineal y Aplicaciones} M. C\'uth has been supported by Charles University Research program No. UNCE/SCI/023 and by the Research grant GA\v{C}R 17-04197Y. M. Doucha was supported by the GA\v CR project 16-34860L and RVO: 67985840.

F. Albiac and J.L. Ansorena would like to thank the Faculty of Mathematics and Physics at Charles University in Prague for their hospitality and generosity during their visit in September 2018, when most work on this paper was undertaken.

\begin{bibsection}
\begin{biblist}

\bib{Albiac2008}{article}{
 author={Albiac, F.},
 title={Nonlinear structure of some classical quasi-Banach spaces and
 $\mathsf F$-spaces},
 journal={J. Math. Anal. Appl.},
 volume={340},
 date={2008},
 number={2},
 pages={1312--1325},
 %  issn={0022-247X},
 %  review={\MR{2390931}},
}

\bib{Albiac2011}{article}{
 author={Albiac, F.},
 title={The role of local convexity in Lipschitz maps},
 journal={J. Convex Anal.},
 volume={18},
 date={2011},
 number={4},
 pages={983--997},
 issn={0944-6532},
%review={\MR{2917862}},
}

\bib{AlbiacKalton2009}{article}{
 author={Albiac, F.},
 author={Kalton, N.~J.},
 title={Lipschitz structure of quasi-Banach spaces},
 journal={Israel J. Math.},
 volume={170},
 date={2009},
 pages={317--335},
%issn={0021-2172},
%review={\MR{2506329}},
}

\bib{AlbiacKalton2016}{book}{
 author={Albiac, F.},
 author={Kalton, N.~J.},
 title={Topics in Banach space theory},
 series={Graduate Texts in Mathematics},
 volume={233},
 edition={2},
 note={With a foreword by Gilles Godefroy},
 publisher={Springer, [Cham]},
 date={2016},
%pages={xx+508},
%isbn={978-3-319-31555-3},
%isbn={978-3-319-31557-7},
%review={\MR{3526021}},
%doi={10.1007/978-3-319-31557-7},
}

\bib{Aoki1942}{article}{
   author={Aoki, T.},
   title={Locally bounded linear topological spaces},
   journal={Proc. Imp. Acad. Tokyo},
   volume={18},
   date={1942},
   pages={588--594},
%review={\MR{0014182}},
}

\bib{BenLin2000}{book}{
 author={Benyamini, Y.},
 author={Lindenstrauss, J.},
 title={Geometric nonlinear functional analysis. Vol. 1},
 series={American Mathematical Society Colloquium Publications},
 volume={48},
 publisher={American Mathematical Society, Providence, RI},
 date={2000},
 pages={xii+488},
%isbn={0-8218-0835-4},
%review={\MR{1727673}},
}

\bib{CuthDoucha2016}{article}{
 author={C\'uth, M.},
 author={Doucha, M.},
 title={Lipschitz-free spaces over ultrametric spaces},
 journal={Mediterr. J. Math.},
 volume={13},
 date={2016},
 number={4},
 pages={1893--1906},
%issn={1660-5446},
%review={\MR{3530906}},
%doi={10.1007/s00009-015-0566-7},
}

\bib{Day1940}{article}{
   author={Day, M.~M.},
   title={The spaces $L^p$ with $0<p<1$},
   journal={Bull. Amer. Math. Soc.},
   volume={46},
   date={1940},
   pages={816--823},
%issn={0002-9904},
%review={\MR{0002700}},
%doi={10.1090/S0002-9904-1940-07308-2},
}

\bib{Evans}{book}{
	author = {Evans, S.~N.},
    title = {Probability and real trees},
    series = {Lecture Notes in Mathematics},
    volume = {1920},
    publisher = {Springer, Berlin},
      date = {2008},
     pages = {xii+193},
%isbn = {978-3-540-74797-0},   
%review = {\MR{2351587}},
}

\bib{Godard2010}{article}{
 author={Godard, A.},
 title={Tree metrics and their Lipschitz-free spaces},
 journal={Proc. Amer. Math. Soc.},
 volume={138},
 date={2010},
 number={12},
 pages={4311--4320},
%issn={0002-9939},
%review={\MR{2680057}},
%doi={10.1090/S0002-9939-2010-10421-5},
}

\bib{GodefroyKalton2003}{article}{
 author={Godefroy, G.},
 author={Kalton, N.~J.},
 title={Lipschitz-free Banach spaces},
%note={Dedicated to Professor Aleksander Pe\l czy\'nski on the occasion of his 70th birthday},
 journal={Studia Math.},
 volume={159},
 date={2003},
 number={1},
 pages={121--141},
%issn={0039-3223},
%review={\MR{2030906}},
}

\bib{HK1970}{article}{
   author={Harrell, R. E.},
   author={Karlovitz, L. A.},
   title={Girths and flat Banach spaces},
   journal={Bull. Amer. Math. Soc.},
   volume={76},
   date={1970},
   pages={1288--1291},
%   issn={0002-9904},
%   review={\MR{0267383}},
 %  doi={10.1090/S0002-9904-1970-12643-X},
}

\bib{KaltonHandbook}{article}{
   author={Kalton, N.},
   title={Quasi-Banach spaces},
   conference={
      title={Handbook of the geometry of Banach spaces, Vol. 2},
   },
   book={
      publisher={North-Holland, Amsterdam},
   },
   date={2003},
   pages={1099--1130},
%review={\MR{1999192}},
%doi={10.1016/S1874-5849(03)80032-3},
}

\bib{KPR1984}{book}{
 author={Kalton, N.~J.},
 author={Peck, N.~T.},
 author={Roberts, J.~W.},
 title={An $F$-space sampler},
 series={London Mathematical Society Lecture Note Series},
 volume={89},
 publisher={Cambridge University Press, Cambridge},
 date={1984},
 pages={xii+240},
%isbn={0-521-27585-7},
%review={\MR{808777}},
}

\bib{Kalton2004}{article}{
 author={Kalton, N.~J.},
 title={Spaces of Lipschitz and H\"older functions and their applications},
 journal={Collect. Math.},
 volume={55},
 date={2004},
 number={2},
 pages={171--217},
%issn={0010-0757},
%review={\MR{2068975}},
}

\bib{Heinonen2001}{book}{
 author={Heinonen, J.},
 title={Lectures on analysis on metric spaces},
 series={Universitext},
 publisher={Springer-Verlag, New York},
 date={2001},
 pages={x+140},
%isbn={0-387-95104-0},
%review={\MR{1800917}},
}

\bib{McShane1934}{article}{
author={McShane, E.~J.},
title={Extension of range of functions},
journal={Bull. Amer. Math. Soc.},
volume={40},
date={1934},
number={12},
pages={837--842},
}

\bib{NaorSchechtman2007}{article}{
 author={Naor, A.},
 author={Schechtman, G.},
 title={Planar earthmover is not in $L_1$},
 journal={SIAM J. Comput.},
 volume={37},
 date={2007},
 number={3},
 pages={804--826},
%issn={0097-5397},
%review={\MR{2341917}},
%doi={10.1137/05064206X},
}

\bib{Peetre1974}{article}{
 author={Peetre, J.},
 title={Remark on the dual of an interpolation space},
 journal={Math. Scand.},
 volume={34},
 date={1974},
 pages={124--128},
%issn={0025-5521},
%review={\MR{0372640}},
}

\bib{Petitjean2017}{article}{
   author={Petitjean, C.},
   title={Lipschitz-free spaces and Schur properties},
   journal={J. Math. Anal. Appl.},
   volume={453},
   date={2017},
   number={2},
   pages={894--907},
%   issn={0022-247X},
%   review={\MR{3648263}},
 %  doi={10.1016/j.jmaa.2017.04.047},
}

\bib{JJS1967}{article}{
   author={Sch\"{a}ffer, J. J.},
   title={Inner diameter, perimeter, and girth of spheres},
   journal={Math. Ann. 173 (1967), 59--79; addendum, ibid.},
   volume={173},
   date={1967},
   pages={79--82},
%   issn={0025-5831},
  % review={\MR{0218875}},
%   doi={10.1007/BF01351519},
}

\bib{Stiles1972}{article}{
 author={Stiles, W.~J.},
 title={Some properties of $l_{p}$, $0<p<1$},
 journal={Studia Math.},
 volume={42},
 date={1972},
 pages={109--119},
%issn={0039-3223},
%review={\MR{0308726}},
}

%\bib{Weaver1999}{book}{
   %author={Weaver, N.},
   %title={Lipschitz algebras},
   %publisher={World Scientific Publishing Co., Inc., River Edge, NJ},
   %date={1999},
   %pages={xiv+223},
  % isbn={981-02-3873-8},
   %review={\MR{1832645}},
 %  doi={10.1142/4100},
%}

\bib{Weaver2018}{book}{
   author={Weaver, N.},
   title={Lipschitz algebras},
edition={2},
   publisher={World Scientific Publishing Co. Pte. Ltd., Hackensack, NJ},
   date={2018},
   pages={xiv+458},
 %  isbn={978-981-4740-63-0},
 %  review={\MR{3792558}},
}
\end{biblist}

\end{bibsection}

\end{document}